\documentclass[12pt]{amsart}
\usepackage[a4paper, top=30mm, bottom=40mm, inner=24mm, outer=24mm]{geometry}
\usepackage{comment}
\excludecomment{confidential}
\usepackage{tikz-cd}
\usepackage{url}
\usepackage{amsmath}
\usepackage{amssymb}
\usepackage{amsthm}
\usepackage{amsthm}
\usepackage{letltxmacro}
\usepackage{changepage}
\usepackage{latexsym}
\usepackage{graphicx}
\usepackage{hyperref}
\usepackage{enumerate}
\usepackage{amsfonts}
\usepackage{amstext}
\usepackage{mathrsfs}
\usepackage{float}
\usepackage[colorinlistoftodos, textwidth=20mm]{todonotes}
\linespread{1.25}
\setlength{\marginparwidth}{2cm}

\newtheorem{thm}{Theorem}
\usepackage{amsthm}
\makeatletter
\def\th@plain{%
  \thm@notefont{}
  \itshape 
}
\def\th@definition{%
  \thm@notefont{}
  \normalfont 
}
\makeatother
\LetLtxMacro\oldproof\proof
\let\endoldproof\endproof
\renewenvironment{proof}[1][\proofname]
  {\begin{adjustwidth}{2em}{2em}
   \oldproof[#1]}
  {\endoldproof
   \end{adjustwidth}\vspace{3mm}}
\newtheorem{cor}[thm]{Corollary}
\newtheorem{lem}[thm]{Lemma}
\newtheorem{prop}[thm]{Proposition}
\theoremstyle{definition}
\newtheorem{defn}[thm]{Definition}
\newtheorem{rem}[thm]{Remark}

\newcommand{\lpi}{\langle}
\newcommand{\rpi}{\rangle}


\newcommand{\fa}{\mathfrak{a}}

\newcommand{\fe}{\mathfrak{e}}





\newcommand{\bba}{\mathbb{A}}

\newcommand{\bbc}{\mathbb{C}}

\newcommand{\bbg}{\mathbb{G}}

\newcommand{\bbn}{\mathbb{N}}

\newcommand{\bbr}{\mathbb{R}}

\newcommand{\bbt}{\mathbb{T}}

\newcommand{\bbz}{\mathbb{Z}}

\newcommand{\Cg}{\mathcal{G}}




\newcommand{\uB}{\underline{B}}
\newcommand{\uC}{\underline{C}}

\newcommand{\uH}{\underline{H}}

\newcommand{\uZ}{\underline{Z}}




\newcommand{\cpv}{\begin{center}\begin{minipage}[t]{14cm}\small{\it Proof.} }
\newcommand{\fpv}{\fim\end{minipage}\end{center}}
\newcommand{\fim}{\hfill $\Box$}

\newcommand{\ga}{{\Gamma_{E/F}}}
\newcommand{\we}{{W_{E/F}}}
\newcommand{\ef}{{E/F}}

\newcommand{\LT}{{}^{\mathsf{L}}T}

\newcommand{\dT}{\widehat{T}}
\newcommand{\dX}{\widehat{X}}

\newcommand{\logtc}{\widetilde{\log}_T}

\DeclareMathOperator{\Hom}{Hom}

\DeclareMathOperator{\Aut}{Aut}

\DeclareMathOperator{\id}{id}
\DeclareMathOperator{\Fr}{Fr}

\DeclareMathOperator{\Pl}{Pl}
\DeclareMathOperator{\Lie}{Lie}
\DeclareMathOperator{\Res}{Res}
\DeclareMathOperator{\Inf}{Inf}


\begin{document}
\title{A remark on the Langlands correspondence for tori}
\author{Marcelo De Martino}
    \address{Forward College, Rua das Flores, 71, 1200-194 Lisboa, Portugal}
    \email{marcelo.demartino@forward-college.eu}
\author{Eric Opdam}
    \address{Korteweg-de Vries Institute for Mathematics, Universiteit van Amsterdam, Science Park 107, 1090 GE Amsterdam, the Netherlands}
   \email{e.m.opdam@uva.nl}
 \thanks{MDM was partly supported by the special research fund (BOF) from Ghent University [BOF20/PDO/058].}
\subjclass[2020]{11R34, 20J06, 22E55}

\begin{abstract}
For an algebraic torus defined over a local (or global) field $F$, a celebrated result of R.P. Langlands establishes a natural homomorphism from the group of continuous cohomology classes of the Weil group, valued in the dual torus, onto the space of complex characters of the rational points of the torus (or automorphic characters in the global case).
We expand on this result by detailing its topological 
aspects. We show that if we topologize the relevant spaces of continuous homomorphisms and continuous cochains using the compact-open topology, Langlands's map becomes a (surjective, 
finite-to-one) homomorphism of abelian  complex  Lie groups.
Moreover, we demonstrate that, in both the local and global settings, the subset of unramified characters is the identity component of the relevant space of characters.
Finally, we compare the group of unramified characters with the Galois
(co)invariants of the dual torus.
\end{abstract}

\maketitle

\tableofcontents

\section{Introduction}
Let  $T$ be an algebraic torus defined over a field $F$ (all fields in this article will either be local or global) and splitting over a finite Galois extension $E/F$, with Galois group  $\ga$.
Denote by $X=\Hom(T_E,\bbg_m)$ 
the character lattice, by $\dX = \Hom(X,\bbz)$ the cocharacter lattice and by $\dT = \Hom(\dX,\bbc^*)$ the complex dual torus of $T$. Whenever $F$ is global, we let $\bba_F$ denote its ring of ad\`eles. 

In \cite{La}, Langlands described the space of continuous characters 
$T(F)\to\bbc^*$ (local case) and of 
automorphic characters $T(\bba_F)/T(F)\to\bbc^*$ (global case). Let
\begin{equation}\label{eq:C}
C_E = \left\{
\begin{array}{ll}
E^* & \textup{if }E\textup{ local},\\
\bba^*_E/E^* & \textup{if }E\textup{ global},
\end{array}
\right.
\end{equation}
and denote by $\we$ the relative Weil group, defined via the extension
\begin{equation}\label{eq:WeilGroup}
  0 \longrightarrow C_E\longrightarrow \we \stackrel{p}{\longrightarrow} \ga \longrightarrow 0  
\end{equation}
 of $\ga$ by $C_E$, defined by the fundamental class in $H^2(\Gamma_{E/F},C_E)$ (see \cite[(1.2)]{Ta2}). In what follows, we shall denote the image of the map  $\we \stackrel{p}{\longrightarrow} \ga$ by $p(\omega)=\overline{\omega}$. Langlands showed that there is a finite-to-one map from the group $H^1_c(\we,\dT)$, of continuous first-cohomology classes of $\we$ with values in $\dT$, onto the relevant space of continuous characters. When $F$ is local, the correspondence is one-to-one; when $F$ is global, it is one-to-one modulo locally trivial equivalence $\sim_{l.e.}$ (see Definition \ref{def:LocTrivClasses}, below).
 
The aim of this article is to show that, when suitably topologizing the cohomology groups and the spaces of characters, Langlands's canonical map  is continuous and open, and that the space of unramified characters, $X_{T}$ (see Definitions \ref{def:LocUnrChar} and \ref{def:GlobUnrChar}), is the identity component of the relevant space of characters. Our first main result is the following. Given our torus $T$ defined over $F$, let
\begin{equation}\label{eq:X}
\mathsf{X}_\bbc(T,F) = \left\{
\begin{array}{ll}
\Hom_c(T(F),\bbc^*) & \textup{if }F\textup{ local},\\
\Hom_c(T(\bba_F)/T(F),\bbc^*) & \textup{if }F\textup{ global}.
\end{array}
\right.
\end{equation}
In both cases, $\mathsf{X}_\bbc(T,F)$ is topologized with the compact-open topology, in which case it becomes a metrizable space \cite[Section 8]{Ar}. 
We note that $X_T\subset \mathsf{X}_\bbc(T,F)$ is 
a closed subset. We equip $X_T$ with the subspace 
topology. 
\begin{thm}\label{thm:mainThm0}
Let $F$ be a local or global field. Then $\mathsf{X}_\bbc(T,F)$ is an abelian complex Lie group 
with identity component $X_T$ and Lie algebra 
$\Lie(\dT)^{\Gamma_\ef}$. 
If $V_F\cong \bbr$ then $X_T$ is a complex vector space, and if $V_F\cong\bbz$ then $X_T$ is a complex algebraic torus. 
\end{thm}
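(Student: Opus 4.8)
The plan is to analyze the structure of $\mathsf{X}_\bbc(T,F)$ by first understanding the ``universal'' building blocks, namely the character groups of $\bbr$, $\bbz$, and of compact groups, and then transporting this to $T(F)$ (resp. $T(\bba_F)/T(F)$) via a structural decomposition of the latter. Concretely, I would first recall (or establish earlier in the paper) that $T(F)$ in the local case, and $T(\bba_F)/T(F)$ in the global case, fits into a short exact sequence of topological groups
\begin{equation*}
0 \longrightarrow T(F)^1 \longrightarrow T(F) \stackrel{v}{\longrightarrow} V_F \longrightarrow 0,
\end{equation*}
where $T(F)^1$ (resp. its global analogue) is a compact group and $V_F$ is either $\bbr$ (archimedean local, or global) or $\bbz$ (non-archimedean local). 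Here $v$ is built from the valuation/Harish-Chandra map, and the rank of $V_F$ is $\dim \Lie(\dT)^{\Gamma_\ef}$ in the relevant sense. Applying $\Hom_c(-,\bbc^*)$, which is exact on the relevant class of locally compact abelian groups, yields
\begin{equation*}
0 \longrightarrow \Hom_c(V_F,\bbc^*) \longrightarrow \mathsf{X}_\bbc(T,F) \longrightarrow \Hom_c(T(F)^1,\bbc^*) \longrightarrow 0 ,
\end{equation*}
and I would identify the image of $\Hom_c(V_F,\bbc^*)$ with the unramified characters $X_T$ (by the very Definitions \ref{def:LocUnrChar} and \ref{def:GlobUnrChar}, an unramified character is one trivial on the ``compact part'', i.e.\ factoring through $v$).

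Next I would treat the two pieces separately. Since $T(F)^1$ is compact, its Pontryagin dual $\Hom_c(T(F)^1,\bbc^*) = \Hom_c(T(F)^1, S^1)$ is discrete; hence the quotient map above exhibits $\mathsf{X}_\bbc(T,F)$ as a disjoint union of translates of the open subgroup $X_T$, so $X_T$ is open, and being a closed subgroup (as already noted in the excerpt) it is the identity component provided $X_T$ is connected. For the identification of $X_T$: if $V_F \cong \bbr$ then $\Hom_c(\bbr,\bbc^*) \cong \Hom_c(\bbr,\bbr) \times \Hom_c(\bbr, S^1) \cong \bbc$ as a topological group, which is connected and is visibly a complex vector space (of dimension $\rk V_F$ in general, i.e.\ $\Lie(\dT)^{\Gamma_\ef}$); if $V_F \cong \bbz$ then $\Hom_c(\bbz,\bbc^*) = \bbc^*$, a complex algebraic torus, again connected. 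In both cases one gets that $X_T$ is connected, completing the identity-component claim, and simultaneously gives the last sentence of the theorem. The complex Lie group structure on all of $\mathsf{X}_\bbc(T,F)$ then follows because it is locally isomorphic (via the open embedding of $X_T$) to the complex Lie group $X_T$, and translation by elements of a transversal for $X_T$ provides compatible charts on the other components; the Lie algebra is that of $X_T$, namely $\Lie(\dT)^{\Gamma_\ef}$, once one matches $\rk V_F$ with $\dim_\bbc \Lie(\dT)^{\Gamma_\ef}$ — this rank computation should be available from the duality between $V_F$ and the Galois-fixed cocharacters, e.g.\ via Langlands's map or a direct computation of $T(F)/T(F)^1$.

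The main obstacle, I expect, is twofold. First, one must check that the compact-open topology on $\mathsf{X}_\bbc(T,F)$ really does coincide with the topology coming from the above extension — i.e.\ that the map $\Hom_c(V_F,\bbc^*)\hookrightarrow \mathsf{X}_\bbc(T,F)$ is a homeomorphism onto its (open) image and that the quotient topology on $\Hom_c(T(F)^1,\bbc^*)$ is the discrete one. This is where one uses properness/compactness inputs: that $T(F)^1$ is compact (so its character group is discrete in compact-open topology, a standard Pontryagin-type fact, but here with $\bbc^*$ rather than $S^1$ one must argue that a continuous homomorphism from a compact group into $\bbc^*$ lands in $S^1$), and that $V_F$ is a ``nice'' group (finitely generated discrete, or $\bbr$) so its character group with compact-open topology is the expected Lie group. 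Second, in the global case one should make sure the exact sequence for $T(\bba_F)/T(F)$ is correctly set up — the compact part is $(T(\bba_F)/T(F))^1$, the norm-one (or Harish-Chandra kernel) subgroup, and its compactness is a theorem (analogue of compactness of $\bba^1/F^*$ for $\bbg_m$), which I would cite rather than reprove. Once these topological points are secured, the rest is the soft structure theory sketched above.
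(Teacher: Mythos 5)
Your overall strategy matches the paper's: isolate a maximal compact subgroup, show its dual is discrete, deduce that $X_T$ is open, and pin down the structure of $X_T$ by analyzing the ``free'' quotient. However, there is a real notational/conceptual slip that would need fixing. You write the exact sequence as
\[
0 \longrightarrow T(F)^1 \longrightarrow T(F) \stackrel{v}{\longrightarrow} V_F \longrightarrow 0,
\]
but $V_F$ in the paper is a rank-one object ($\bbr_+$ or $q_F^\bbz$), the image of the absolute value on $C_F$. The actual quotient $T(F)/T(F)^1$ (resp.\ its global analogue) has rank equal to $\dim_\bbc \Lie(\dT)^{\Gamma_\ef} = \rk X^{\Gamma_\ef}$, not one; concretely, the paper realizes it (via the map $\logtc$ in \eqref{eq:LogMap}) as a lattice $L_T$ of full rank inside $\fa_0 = \Hom(X^{\Gamma_\ef},\bbr)$ in the $\bbz$-cases, or as all of $\fa_0$ in the $\bbr$-cases. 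Your subsequent computation $\Hom_c(V_F,\bbc^*) \cong \bbc$ or $\bbc^*$ then gives only a one-dimensional piece; to recover $\Lie(\dT)^{\Gamma_\ef}$ you need the quotient to be the rank-$r$ object, and you must actually establish its structure (including, in the $\bbz$-cases, that $L_T$ is of finite index in $\Hom(X^{\Gamma_\ef},\bbz)$ but not necessarily equal to it). You gesture at this ("the rank of $V_F$ is $\dim\Lie(\dT)^{\Gamma_\ef}$ in the relevant sense"), but as written the sequence and the ensuing computation don't carry that content.

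There is a second, smaller point worth flagging: the paper does not work directly with $T(F)$ and $T(\bba_F)/T(F)$ but introduces the auxiliary group $T_{C_F} = \Hom_{\Gamma_\ef}(X,C_E)$, which equals $T(F)$ in the local case and contains $T(\bba_F)/T(F)$ as a closed finite-index subgroup in the global case. This lets the paper prove compactness of the norm-one subgroup (Proposition \ref{prop:MaxCompact}), the structure of the quotient (Proposition \ref{prop:tildeXT}), and openness of $\widetilde X_T$ (Proposition \ref{prop:charTCFconcomp}) uniformly, and then descend to $T(\bba_F)/T(F)$ via the open restriction map of Proposition \ref{prop:OpClosed}; it also feeds directly into Langlands's isomorphism $\Lambda$ later on. Your direct approach is viable but requires a separate proof that $T(\bba_F)^1/T(F)$ is compact and that the restriction of characters from $T(\bba_F)/T(F)$ to this compact part is surjective — the latter is where the paper uses divisibility of $\bbc^*$ together with either the openness of $T^1_{C_F}$ (in the $\bbz$-cases) or a Galois-equivariant splitting $C_E = C_E^1 \times V_E$ (in the $\bbr$-cases), a subtlety your ``exact on the relevant class'' claim elides.
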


The proof of this theorem is given in Corollary \ref{cor:main0}, below. The main point is to show that $X_T\subset \mathsf{X}_\bbc(T,F)$ is open. 
The basic structure of $X_T$ as a complex vector space or complex algebraic torus is well known, and is recalled in more detail in Corollary \ref{cor:GlobXT}. 

We now turn our attention to the cohomological side of Langlands's correspondence. Using a formalism of C. Moore \cite{Mo1,Mo2,Mo3,Mo4} we consider the theory of measurable cohomology groups $\uH^*(W_\ef, \dT)$ (see Section \ref{s:CohoSide}), which provides fruitful interaction between topological and homological algebra features. We then show that $H^1_c(\we,\dT)$ naturally has
the structure of a complete metric space (see Theorem \ref{thm:Hausdorff}) and prove the following result.
\begin{thm}\label{thm:mainThm}
For $F$ a local or global field, $H^1_c(\we,\dT)$ is an 
abelian complex Lie group
with Lie algebra $H^1_c(\we,\Lie(\dT))\cong\Lie(\dT)^{\Gamma_\ef}$. If $V_F\cong \bbr$ then 
the identity component of $H^1_c(\we,\dT)$ is a complex vector space, 
and if $V_F\cong \bbz$ then the identity component of $H^1_c(\we,\dT)$
is a complex algebraic torus. The canonical surjective homomorphism
\[
H^1_c(\we,\dT) \to \mathsf{X}_\bbc(T,F)
\]
established by Langlands, is a homomorphism of complex Lie groups with finite kernel, and is an isomorphism if $F$ is local.
\end{thm}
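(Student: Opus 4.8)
The plan is to deduce Theorem \ref{thm:mainThm} by combining the structural results obtained separately on both sides of the correspondence with a careful analysis of Langlands's map at the level of topological groups. First I would invoke Theorem \ref{thm:Hausdorff} to know that $H^1_c(\we,\dT)$ is a complete metric abelian group, and the (to be proven) description of its Lie algebra as $H^1_c(\we,\Lie(\dT))$. The key identification $H^1_c(\we,\Lie(\dT))\cong\Lie(\dT)^{\Gamma_\ef}$ should follow from the inflation-restriction (Hochschild--Serre) sequence for the extension \eqref{eq:WeilGroup}: since $\Lie(\dT)$ is a $\bbc$-vector space on which $C_E$ acts trivially and connectedly, $H^1_c(C_E,\Lie(\dT))$ should vanish (or be controlled), leaving $H^1_c(\ga,\Lie(\dT))=0$ as $\ga$ is finite and $\Lie(\dT)$ is uniquely divisible, and forcing $H^1_c(\we,\Lie(\dT))$ to be the submodule of invariants $\Lie(\dT)^{\Gamma_\ef}$ coming from $H^0$. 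Granting this, the Lie group structure on $H^1_c(\we,\dT)$ is obtained exactly as on the character side: the exponential map $\Lie(\dT)^{\Gamma_\ef}\to H^1_c(\we,\dT)$ is a local homeomorphism onto an open neighbourhood of the identity, so the identity component is $\Lie(\dT)^{\Gamma_\ef}$ modulo the image of a discrete subgroup, which is a complex vector space when $V_F\cong\bbr$ and a complex algebraic torus when $V_F\cong\bbz$ (the discrete lattice being $\dX$-related in the latter case).

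Next I would address the map itself. Langlands's homomorphism $L\colon H^1_c(\we,\dT)\to\mathsf{X}_\bbc(T,F)$ is already known to be a surjective group homomorphism with finite kernel (and injective when $F$ is local) by \cite{La}; what must be added is continuity, openness, and the statement that it is a morphism of Lie groups. For continuity I would unwind the construction of $L$: a cocycle $\we\to\dT$ is paired with the torus via the Nakayama/Tate duality pairing, and the resulting character of $T(F)$ (or of $T(\bba_F)/T(F)$) depends on the cocycle through finitely many "coordinate" evaluations together with the cup-product with the fundamental class; each of these operations is continuous for the compact-open topologies because evaluation maps and the pairing with a fixed cohomology class are continuous in the Moore/measurable-cohomology framework of Section \ref{s:CohoSide}. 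Thus $L$ is a continuous homomorphism of complete metric abelian groups. To see it is a morphism of complex Lie groups it suffices to check that $dL$ at the identity is the natural map $\Lie(\dT)^{\Gamma_\ef}=H^1_c(\we,\Lie(\dT))\to\Lie(\dT)^{\Gamma_\ef}=\Lie\mathsf{X}_\bbc(T,F)$, which by naturality of the exponential and the compatibility of $L$ with the linearized correspondence is the identity; hence $dL$ is an isomorphism of Lie algebras.

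From $dL$ being a Lie algebra isomorphism it follows that $L$ restricts to a local homeomorphism near the identity, so $L$ is automatically open, and its kernel, being discrete (it injects into $\ker dL=0$ near $e$) and finite by \cite{La}, identifies $\mathsf{X}_\bbc(T,F)$ with the quotient of $H^1_c(\we,\dT)$ by a finite subgroup. When $F$ is local the kernel is trivial, so $L$ is a continuous open bijective homomorphism, hence an isomorphism of topological groups, and since $dL$ is a Lie algebra isomorphism it is an isomorphism of complex Lie groups; this also re-proves Theorem \ref{thm:mainThm0} in the local case and transports the vector-space / algebraic-torus structure across. I expect the main obstacle to be the continuity (and the identification of $dL$) of Langlands's map: the explicit formula for $L$ in \cite{La} passes through several cohomological manipulations — inflation from $\we/C_E$-pieces, the cup product with the fundamental class in $H^2(\ga,C_E)$, and the Tate--Nakayama isomorphism — and one must verify that each step is continuous in the measurable-cohomology topology and compatible with linearization; the formalism of Moore should make this tractable but it requires keeping careful track of which topology sits on each intermediate $\uH^*$. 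A secondary subtlety, only in the global case, is that $L$ is defined on $H^1_c(\we,\dT)$ itself rather than on its quotient by $\sim_{l.e.}$, so one must check that the locally-trivial classes are precisely the finite kernel and that the quotient topology agrees with the compact-open topology on characters — this is where completeness of $H^1_c(\we,\dT)$ and the closedness of the kernel are used.
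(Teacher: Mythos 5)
Your overall plan follows the same architecture as the paper — Hausdorffness via Theorem~\ref{thm:Hausdorff}, the identification $H^1_c(\we,\Lie(\dT))\cong\Lie(\dT)^{\Gamma_\ef}$, the exponential sequence $0\to X\to\Lie(\dT)\to\dT\to 0$ to get the Lie group structure, and then passage to Lie algebras to control Langlands's map. However, there are several genuine gaps that the paper closes with non-trivial arguments and that your sketch leaves open.

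First, you assert without proof that ``the exponential map $\Lie(\dT)^{\Gamma_\ef}\to H^1_c(\we,\dT)$ is a local homeomorphism onto an open neighbourhood of the identity.'' This is precisely the content you would need, but it is not automatic: it is equivalent to the statement that the identity component of $H^1_c(\we,\dT)$ is \emph{open}. The paper proves this (Proposition~\ref{prop:ConCompOp}) by showing that the connecting map in the long exact sequence \eqref{eq:LES} lands in $\uH^2(W_\ef,X)$, and that this latter group is countable and \emph{discrete}; the discreteness requires the argument in Proposition~\ref{prop:DiscreteIsDiscrete}, which uses the almost-connectedness of $W_\ef$ (in the $\bbr$-cases), respectively the Hochschild–Serre spectral sequence through $W_F^1$ and $W_E^c$ (in the $\bbz$-cases), combined with \cite[Proposition 1.3]{Mo2}. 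Without this step you cannot conclude that $H^1_c(\we,\dT)$ is a Lie group, nor that openness of $\Lambda$ on the identity component yields openness of $\Lambda$ globally. Likewise, in the $\bbz$-case you need the image $A$ of $\uH^1(W_\ef,X)$ in $\Lie(\dT)^\ga$ to be a full lattice; that it is discrete comes from the same discreteness of $\uH^1(W_\ef,X)$, and that it has full rank is the content of Proposition~\ref{prop:ALat}, which identifies $A$ with $\Hom_c(T_{C_F},\bbz)$ via \eqref{eq:homc2}. Your phrase ``the discrete lattice being $\dX$-related'' does not establish this.

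Second, your continuity argument (``unwind the construction... each step is continuous in the measurable-cohomology framework'') is not a proof. The paper deliberately avoids verifying continuity of the cup product with the fundamental class and the Tate–Nakayama isomorphism directly. Instead, once the identity components of both sides are known to be open complex Lie groups whose universal covers are the finite-dimensional complex vector spaces $H^1_c(\we,\Lie(\dT))$ and $\Hom_c(T_{C_F},\bbc)$, it reduces the whole question to showing that the induced map $\mathrm{d}\Lambda$ between these universal covers is $\bbc$-\emph{linear} (not merely an isomorphism of abstract abelian groups). This is done in Theorem~\ref{thm:ApexThm} by recognizing $\mathrm{d}\Lambda$ as the restriction of Birkbeck's $\bbc$-bilinear pairing $H^1(\we,\Lie(\dT))\times H_1(\we,\dX)\to\bbc$ from \cite[Remark 2.5]{Bi}. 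Your proposal asserts $\mathrm{d}L$ is ``the identity,'' which is not literally the case (it is a specific natural $\bbc$-linear isomorphism), and more importantly you never verify $\bbc$-linearity, which is the crux of the claim that $\Lambda$ is a homomorphism of \emph{complex} Lie groups.

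Third, a minor point: your inflation-restriction computation of $H^1_c(\we,\Lie(\dT))$ uses the extension $0\to C_E\to W_\ef\to\Gamma_\ef\to 0$, whereas the paper (Proposition~\ref{prop:H1middle}) uses the absolute-value extension $0\to W^1_\ef\to W_\ef\to V_F\to 0$, exploiting the compactness of $W^1_\ef$ and Proposition~\ref{prop:MooreSummary}(g). Your decomposition can be made to work, but your phrasing that the answer is ``the submodule of invariants coming from $H^0$'' is incorrect: $\Lie(\dT)^{\Gamma_\ef}$ appears here as $H^1_c(C_E,\Lie(\dT))^{\Gamma_\ef}\cong\Hom_c(V_E,\Lie(\dT))^{\Gamma_\ef}$ via the $\Gamma_\ef$-invariance of the absolute value, not as an $H^0$ term.
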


We shall discuss Langlands's map in the next section (see Theorem \ref{thm:LangTori}) and provide the arguments proving Theorem \ref{thm:mainThm} in Propositions \ref{prop:OpClosed}, \ref{prop:H1middle}, \ref{prop:ConCompOp}, Theorem \ref{thm:Rcase}, Corollary \ref{cor:Amax} 
and Theorem \ref{thm:ApexThm}, below. 

We emphasize that Langlands’s canonical homomorphism arises through a sequence of identifications involving class field theory and homological algebra, and its topological properties are not manifest from the construction itself. With the Lie group structures at hand,  
the proof that Langlands’s map is a Lie group homomorphism reduces to 
the verification of linearity of the associated map at the level of 
Lie algebras, as we will see in Theorem \ref{thm:ApexThm}.

We now turn to our final results, which provide structural information on $\mathsf{X}_\bbc(T,F)$ and on 
$H^1_c(W_{\ef},\dT)$ as applications of Theorem \ref{thm:mainThm0} and Theorem \ref{thm:mainThm}.
The fundamental exponential sequence 
$0\to \bbz\to \bbc\to \bbc^* \to 0$ gives rise 
to an exact sequence (via tensoring with the lattice $X$)
\begin{equation}
0\to X\to \Lie(\dT) \to \dT \to 0, 
\end{equation}
from which we derive (see \eqref{eq:LES}, where the next term $H^1(\Gamma_{\ef},X)$ is torsion)
\begin{equation}
0\to X^{\Gamma_\ef}\to \Lie(\dT)^{\Gamma_{E/F}}\to (\dT^{\Gamma_{E/F}})^\circ \to 0,
\end{equation}
where $(\dT^{\Gamma_\ef})^\circ$ is the identity component of the $\ga$-invariants of $\dT$.
The vector space $\Lie(\dT)$ admits a canonical decomposition $\Lie(\dT)=\Lie(\dT)^{\Gamma_{E/F}}\oplus N$ where $N$ denotes the direct sum of the nontrivial 
irreducible $\Gamma_{E/F}$-subrepresentations. Let 
$\textup{Pr}_{\Gamma_\ef}$ denote the projection onto $\Lie(\dT)^{\Gamma_{E/F}}$. Then
\begin{equation}
0\to \textup{Pr}_{\Gamma_\ef}(X)\to \Lie(\dT)^{\Gamma_{E/F}}\to \dT_{\Gamma_{E/F}} \to 0, 
\end{equation}
and we have the following results:
\begin{thm}\label{thm:mainThm2}
Let $F$ be a local or global field, and let 
$T$ be an algebraic torus over $F$. Let $E/F$ be a Galois extension such that $T$ splits over $E$.
\begin{enumerate}
\item[(a)] The identity component $H_c^1(W_\ef,\dT)^\circ$ of $H_c^1(W_\ef,\dT)$  satisfies:
\begin{enumerate}
\item[(i)]
If $V_F\cong \bbr$, then 
$H_c^1(W_\ef,\dT)^\circ\approx X_T\approx \Lie(\dT)^{\Gamma_\ef}$ as $\bbc$-vector spaces.
\item[(ii)]
If $F$ is local non-Archimedean, we have, as complex algebraic tori,  
\begin{equation*}
H_c^1(W_\ef,\dT)^\circ
\approx X_T\approx ((\dT^{N_F})_\textup{Fr})^\circ,
\end{equation*} where $N_F$ is the image of the inertia subgroup in $W_{\ef}$ and $\Fr$ is a Frobenius automorphism. 
\item[(iii)]
If $F$ is global and $V_F\cong \bbz$, then 
$H_c^1(W_\ef,\dT)^\circ$ is a complex algebraic torus with  cocharacter lattice $A\subset \bbr\otimes X^{\Gamma_{E/F}}\subset \Lie(\dT)^\ga$,  which satisfies:
\begin{equation*}\label{eq:sandwich}
X^{\Gamma_{\ef}} \subset A \subset X_*(X_T) \subset
\textup{Pr}_{\Gamma_\ef}(X).
\end{equation*}
\end{enumerate}
\item[(b)] The component group of $H_c^1(W_\ef,\dT)$ satisfies, as topological groups, 
\begin{equation*}
    H_c^1(W_\ef,\dT)/H_c^1(W_\ef,\dT)^\circ\approx \uH^2(W_\ef,X). 
\end{equation*}
\item[(c)] Langlands's isomorphism 
\begin{equation*}
\Lambda: H_c^1(W_\ef,\dT)\to  \Hom_c(\Hom_{\Gamma_\ef}(X,C_E),\bbc^*)
\end{equation*}
is an isomorphism of complex Lie groups  
and induces an isomorphism of discrete groups: 
\begin{equation*}
\uH^2(W_\ef,X)\approx \Hom_c(\Hom_{\Gamma_\ef}(X,C^1_E),\bbc^*).
\end{equation*}
\end{enumerate}
\end{thm}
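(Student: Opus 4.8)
The plan is to derive all three parts from Theorems \ref{thm:mainThm0} and \ref{thm:mainThm} together with the exact sequences displayed in the introduction, treating the component group and the identity component separately. For part (a), the identity component $H^1_c(W_\ef,\dT)^\circ$ is by Theorem \ref{thm:mainThm} a complex vector space (if $V_F\cong\bbr$) or a complex algebraic torus (if $V_F\cong\bbz$) with Lie algebra $\Lie(\dT)^{\Gamma_\ef}$; since Langlands's map is a Lie group homomorphism with finite kernel, it restricts to an isomorphism of identity components $H^1_c(W_\ef,\dT)^\circ\xrightarrow{\ \sim\ }X_T$, which handles (i). For (ii), when $F$ is local non-Archimedean one has $V_F\cong\bbz$, so $X_T$ is a complex algebraic torus; the idea is to compute its cocharacter lattice (equivalently, its group of points) by restricting cocycles along the inertia subgroup $N_F\le W_\ef$ and the Frobenius, unwinding the definition of $X_T$ via Definition \ref{def:LocUnrChar} and matching it with $((\dT^{N_F})_\Fr)^\circ$. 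For (iii), the global case $V_F\cong\bbz$: the cocharacter lattice $A$ of the torus $H^1_c(W_\ef,\dT)^\circ\approx X_T$ is obtained from the Lie-algebra identification $\Lie(X_T)=\Lie(\dT)^{\Gamma_\ef}$; the containment $A\subset X_*(X_T)$ is trivial (they agree), and the containments $X^{\Gamma_\ef}\subset A$ and $X_*(X_T)\subset\textup{Pr}_{\Gamma_\ef}(X)$ follow from the two exact sequences $0\to X^{\Gamma_\ef}\to\Lie(\dT)^{\Gamma_\ef}\to(\dT^{\Gamma_\ef})^\circ\to 0$ and $0\to\textup{Pr}_{\Gamma_\ef}(X)\to\Lie(\dT)^{\Gamma_\ef}\to\dT_{\Gamma_\ef}\to 0$ by comparing integral structures (a cocharacter of $X_T$ exponentiates to a closed $1$-parameter subgroup, hence its "logarithm" lies between the kernels of the two exponential maps appearing in those sequences).

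For part (b), the plan is to use the measurable cohomology long exact sequence attached to $0\to X\to\Lie(\dT)\to\dT\to 0$ in degrees $1,2$:
\[
\uH^1(W_\ef,\Lie(\dT))\to \uH^1(W_\ef,\dT)\to \uH^2(W_\ef,X)\to \uH^2(W_\ef,\Lie(\dT)).
\]
Since $\Lie(\dT)$ is a $\bbq$-vector space (a uniquely divisible, hence cohomologically "large" coefficient module) one argues that $\uH^2(W_\ef,\Lie(\dT))=0$ — or at least that the map $\uH^2(W_\ef,X)\to\uH^2(W_\ef,\Lie(\dT))$ is zero — while $\uH^1(W_\ef,\Lie(\dT))\cong\Lie(\dT)^{\Gamma_\ef}$ is connected and maps onto $H^1_c(W_\ef,\dT)^\circ$; a diagram chase then yields $H^1_c(W_\ef,\dT)/H^1_c(W_\ef,\dT)^\circ\cong\uH^2(W_\ef,X)$. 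The topological part of the statement — that this is an isomorphism of topological groups, where the left side carries the quotient topology and $\uH^2(W_\ef,X)$ is discrete — follows because $\uH^2(W_\ef,X)$ with coefficients in the lattice $X$ is a countable (in fact finitely generated, using that $W_\ef$ is an extension of the profinite-by-$\bbr$ or $-\bbz$ group structure) discrete group, and the connecting map is continuous for the Moore topology by Theorem \ref{thm:Hausdorff}.

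For part (c), the statement that $\Lambda$ is an isomorphism of complex Lie groups is exactly the content of Theorem \ref{thm:mainThm} once one identifies the target: Langlands's correspondence (Theorem \ref{thm:LangTori}, to be recalled in the next section) identifies $\mathsf{X}_\bbc(T,F)$ with $\Hom_c(\Hom_{\Gamma_\ef}(X,C_E),\bbc^*)$, since $T(F)$ (local) or $T(\bba_F)/T(F)$ (global) is identified with $\Hom_{\Gamma_\ef}(X,C_E)$ by the duality $T(F)=(X\otimes\bbg_m)(F)$ and Shapiro/class field theory. The induced isomorphism on component groups then comes from applying the snake lemma to the commuting square relating $\Lambda$ to the exponential sequences, using that the identity component of $\Hom_c(\Hom_{\Gamma_\ef}(X,C_E),\bbc^*)$ is $\Hom_c(\Hom_{\Gamma_\ef}(X,C^1_E),\bbc^*)$ (the annihilator of the unramified/norm-one part $C^1_E\subset C_E$), together with part (b).

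\textbf{Main obstacle.} The hardest part will be the precise bookkeeping in part (a)(iii): pinning down the cocharacter lattice $A$ inside $\Lie(\dT)^{\Gamma_\ef}$ and verifying the four-term sandwich requires carefully tracking which integral lattice the exponential map of the algebraic torus $H^1_c(W_\ef,\dT)^\circ$ uses, versus the lattices $X^{\Gamma_\ef}$ and $\textup{Pr}_{\Gamma_\ef}(X)$ coming from the two exponential sequences; the containments are not equalities in general (the global $V_F\cong\bbz$ case genuinely sees the difference between invariants and coinvariants of $X$), so one must exhibit explicit cocharacters and show they do not all come from $X^{\Gamma_\ef}$ while none escapes $\textup{Pr}_{\Gamma_\ef}(X)$. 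The vanishing $\uH^2(W_\ef,\Lie(\dT))=0$ in part (b) is a secondary point that needs the structure theory of Moore cohomology for the specific coefficient module $\Lie(\dT)$ over the relative Weil group, but this should follow from divisibility plus the known cohomological dimension of $W_\ef$.
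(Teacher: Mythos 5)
Your overall plan follows the paper's intended route (Theorems \ref{thm:Rcase}, \ref{thm:Zcase}, Propositions \ref{prop:LnAHaines}, \ref{prop:ExplicitCocycle}, \ref{prop:H2X}), but there are several concrete gaps and errors.

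\emph{Part (a)(i).} You assert that a Lie group homomorphism with finite kernel "restricts to an isomorphism of identity components." That is not automatic: a finite kernel can lie inside the identity component. The missing step is the observation (used explicitly in the paper) that the identity component is here a complex vector space, and vector spaces contain no nontrivial finite subgroups; that forces the kernel to be trivial on the identity component. Minor, but you need to say it.

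\emph{Part (a)(iii).} This is your most serious error. You write that "the containment $A\subset X_*(X_T)$ is trivial (they agree)." They do not agree in the global $\bbz$-case. The lattice $A$ is the cocharacter lattice of $H^1_c(W_\ef,\dT)^\circ$, while $X_*(X_T)=\widehat L_T=\Hom(L_T,\bbz)$ is the cocharacter lattice of $X_T$; the canonical map $H^1_c(W_\ef,\dT)^\circ\to X_T$ is a surjection of algebraic tori whose finite kernel consists precisely of the nonzero locally trivial parameters (Theorem \ref{thm:Zcase}, Corollary \ref{cor:Amax}), and the cocharacter lattices are therefore related by a \emph{finite-index} inclusion $A\hookrightarrow\widehat L_T$, not an equality. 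Indeed the whole point of the sandwich in (a)(iii) is that all four lattices can be distinct. Your argument thus collapses two of the four groups and misses the content of the statement. The actual proof dualizes the chain of finite-kernel surjections $(\dT^{\Gamma_\ef})^\circ\to H^1_c(W_\ef,\dT)^\circ\to X_T\to\dT_{\Gamma_\ef}$ (Proposition \ref{prop:ExplicitCocycle}), and the outer two lattices $X^{\Gamma_\ef}$ and $\textup{Pr}_{\Gamma_\ef}(X)$ arise as cocharacter lattices of $(\dT^{\Gamma_\ef})^\circ$ and $\dT_{\Gamma_\ef}$ via the two exact sequences in the introduction.

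\emph{Part (b).} You justify $\uH^2(W_\ef,\Lie(\dT))=0$ by appealing to "divisibility plus the known cohomological dimension of $W_\ef$." That does not work for measurable/continuous cohomology of locally compact groups with $\bbr$-vector space coefficients; $H^2$ is precisely the degree at issue, and divisibility does not give vanishing there. The paper's argument is genuinely different and more delicate: it invokes Rajan's vanishing $\uH^2(W_F,\dT)=0$, the compactness of $W_E^c$ and inflation-restriction to show $\uH^2(W_\ef,\dT)$ is \emph{countable}, then uses Proposition \ref{prop:DiscreteIsDiscrete} (countability and discreteness of $\uH^2(W_\ef,X)$) together with the long exact sequence to squeeze the \emph{a priori} real vector space $\uH^2(W_\ef,\Lie(\dT))$ between two countable groups, forcing it to be $0$. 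Without an argument of this kind, (b) is unproven.

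\emph{Part (c).} You write that "the identity component of $\Hom_c(\Hom_{\Gamma_\ef}(X,C_E),\bbc^*)$ is $\Hom_c(\Hom_{\Gamma_\ef}(X,C^1_E),\bbc^*)$." This conflates the identity component with the component group. Since $T^1_{C_F}=\Hom_{\Gamma_\ef}(X,C^1_E)$ is compact, $\Hom_c(T^1_{C_F},\bbc^*)$ is \emph{discrete}, so it cannot be the identity component. The identity component is $\Hom_c(T_{C_F}/T^1_{C_F},\bbc^*)=\widetilde X_T$ (Proposition \ref{prop:charTCFconcomp}), and $\Hom_c(T^1_{C_F},\bbc^*)$ is the component group. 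The strategy of comparing component groups across the isomorphism $\Lambda$ is sound, but as written your identification of the two sides is inverted.
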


\begin{rem}\label{rem:Schwein0}
    When $F$ is local non-Archimedean, Theorem \ref{thm:mainThm2}(c) yields  $\uH^2(W_\ef,X)\approx\Hom_c(T(F)^1,\bbc^*)$, recovering an observation made by Schwein in \cite[Section 5.2]{Sc} (see also Remark \ref{rem:Schwein}, below). 
\end{rem}

To prove Theorem \ref{thm:mainThm2}(a) we need Theorem \ref{thm:mainThm} together with key results we prove later on. In the case when $V_F\cong \bbr$, it follows from Theorem \ref{thm:Rcase} and the fact that a vector space admits no non-trivial finite subgroups. In the case when $V_F\cong \bbz$, we need Theorem \ref{thm:Zcase} and Propositions \ref{prop:LnAHaines} and \ref{prop:ExplicitCocycle}. The specific description for $F$ local non-Archimedean is dealt with in Proposition \ref{prop:LnAHaines} (see also \cite[Section 3.3.1]{Ha}). 
Theorem \ref{thm:mainThm2}(b),(c) are proven in (the proof of) Proposition \ref{prop:H2X}. 

\subsection{Notations} Given two topological groups $G,H$ we shall write $G\cong H$ to mean that $G$ is isomorphic to $H$ as groups and $G\approx H$ to mean that $G$ is isomorphic to $H$ as topological groups. Also, we shall denote by $G^\circ$ the identity component of the topological group $G$.

 \subsection{Acknowledgements} We thank A. Kret for insightful discussions on this topic and M. Solleveld for suggesting the reference \cite{Ha} and for pointing out an inaccuracy in an earlier version. We are grateful to the anonymous referee for a careful reading of the manuscript and for their valuable suggestions, which have greatly improved the clarity and quality of the final version.

\section{Langlands's parametrization}
We start by recalling Langlands's parametrization  (see also the works of Birkbeck \cite{Bi}, Labesse \cite{Lab} and Yu \cite{Yu}, this last one in the local case). We retain the notation regarding the extension $E/F$ as above. There is a well-known equivalence of categories \cite[8.12]{Bo}
\[
\left\{
\begin{array}{c}
\textup{Algebraic tori } T/F\\
\textup{that split over } E
\end{array}
\right\}\longleftrightarrow
\left\{
\begin{array}{c}
\textup{Lattices } X \textup{ with} \\
\Gamma_{E/F}\textup{-action}\\
\end{array}
\right\}
\]
given by sending $T\mapsto X = \Hom(T_E,\bbg_m)$. 
Under this correspondence, we have $T(E) = \Hom(X,E^*)$ 
and $T(F)=\Hom_\ga(X,E^*)$. Denote by $\dX := \Hom(X,\bbz)$ and $\dT := \Hom(\dX,\bbc^*)$. 
We define an action of the Weil group $\we$ on $\dT$  via the projection $p:\we\to\ga$; in particular, $C_E$ acts trivially.

We shall write $\LT := \dT\rtimes \ga$ and we say that a homomorphism $\varphi:\we \to \LT$ is an \emph{$L$-homomorphism} if  by composing with the projection to the second factor we get $\textup{pr}_2\varphi = p$. We recall \cite[p. 232]{La} that the space of Langlands parameters of $T$, denoted $\Phi(T)$, is defined as the group of all continuous $L$-homomorphisms $\varphi:\we\to\LT$, modulo $\dT$ conjugation. 

\begin{defn}
For a locally compact group $\Cg$ endowed with a continuous action on $\dT$, let $Z_c^1(\Cg,\dT)$ denote the (abelian) group of continuous $1$-cocycles of $\Cg$ with values in $\dT$ and let $B_c^1(\Cg,\dT)$ denote the subgroup of continuous $1$-coboundaries. We topologize these spaces with the compact-open topology, and we let the quotient $H^1_c(\Cg,\dT)$ denote the group of continuous $1$-cohomology classes, endowed with the quotient topology.
\end{defn}

As is well-known \cite[p. 232]{La} we have the following result.

\begin{prop}[\cite{La}]\label{prop:ParCoh}
The space $\Phi(T)$ is isomorphic to $H^1_c(\we,\dT)$ as abelian groups.
\end{prop}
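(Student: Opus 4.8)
The plan is to unwind the two definitions and observe that they match essentially tautologically. Write a continuous $L$-homomorphism $\varphi\colon\we\to\LT=\dT\rtimes\ga$ in coordinates as $\varphi(\omega)=(a_\varphi(\omega),\overline{\omega})$, where the second coordinate is forced to be $\overline{\omega}=p(\omega)$ by the requirement $\textup{pr}_2\varphi=p$. Since $\ga$ is finite and discrete, the underlying topological space of $\LT$ is $\dT\times\ga$, and $p$ is continuous by the very construction of $\we$ in \eqref{eq:WeilGroup}; hence $\varphi$ is continuous if and only if $a_\varphi=\textup{pr}_1\circ\varphi\colon\we\to\dT$ is continuous. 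Writing out $\varphi(\omega_1\omega_2)=\varphi(\omega_1)\varphi(\omega_2)$ in $\dT\rtimes\ga$ and comparing first coordinates gives
\[
a_\varphi(\omega_1\omega_2)=a_\varphi(\omega_1)\cdot\overline{\omega_1}\bigl(a_\varphi(\omega_2)\bigr),
\]
which is precisely the $1$-cocycle identity for the action of $\we$ on $\dT$ (an action that factors through $p$, so that $C_E$ acts trivially). Thus $\varphi\mapsto a_\varphi$ is a bijection from the set of continuous $L$-homomorphisms $\we\to\LT$ onto $Z_c^1(\we,\dT)$.

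Next I would record the group structures on both sides. Because $\dT$ is abelian and $\ga$ acts on it by group automorphisms, the cocycle identity above is preserved under pointwise products in the first coordinate; this is the abelian group law on continuous $L$-homomorphisms, and under it $\varphi\mapsto a_\varphi$ is manifestly a group isomorphism onto $Z_c^1(\we,\dT)$ with its usual additive structure.

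Finally I would identify $\dT$-conjugacy with cohomological equivalence. For $t\in\dT$ a direct computation in $\dT\rtimes\ga$ gives
\[
(t,1)\,\bigl(a_\varphi(\omega),\overline{\omega}\bigr)\,(t,1)^{-1}=\bigl(a_\varphi(\omega)\cdot t\cdot\overline{\omega}(t)^{-1},\ \overline{\omega}\bigr),
\]
so conjugation by $t$ replaces the cocycle $a_\varphi$ by its product with the continuous coboundary $\omega\mapsto t\cdot\overline{\omega}(t)^{-1}$, and as $t$ ranges over $\dT$ these exhaust $B_c^1(\we,\dT)$. Hence the bijection descends to a group isomorphism $\Phi(T)=Z_c^1(\we,\dT)/B_c^1(\we,\dT)=H_c^1(\we,\dT)$, as asserted. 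There is no genuine obstacle here — the argument is pure bookkeeping — the only points meriting a line of care being the equivalence of continuity of $\varphi$ and of $a_\varphi$ (immediate from $\ga$ being discrete) and the compatibility of the pointwise product of cocycles with passage to the $\dT$-conjugation quotient (immediate since $B_c^1$ is a subgroup of the abelian group $Z_c^1$). Note that the proposition only claims an isomorphism of abelian groups, so no comparison of the quotient topologies is needed at this stage.
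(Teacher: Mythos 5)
Your argument is correct and is precisely the standard bookkeeping that Langlands alludes to on p.\ 232 of \cite{La}; the paper itself gives no proof and simply cites this reference. The identifications you make (first coordinate of an $L$-homomorphism $\leftrightarrow$ continuous $1$-cocycle, $\dT$-conjugation $\leftrightarrow$ multiplication by a coboundary) are exactly the intended ones, and your care about the continuity equivalence and the compatibility of the group law with the $\dT$-conjugation quotient is the right level of detail.
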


\begin{thm}[\cite{La}] \label{thm:LangTori}
Let $F$ be local or global. There are canonical homomorphisms 
\[
H^1_c(\we,\dT) \stackrel{\Lambda}{\to}\Hom_c(\Hom_{\ga}(X,C_E),\bbc^*)\stackrel{\rho}{\to} \mathsf{X}_\bbc(T,F),
\]
where $\Lambda$ is an isomorphism and $\rho$ surjective. 
If $F$ is local, then $\rho$ is also an isomorphism. If $F$ is global, then the kernel of $\rho$ is finite and 
consists of the cohomology classes which are locally trivial. 
\end{thm}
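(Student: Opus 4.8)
The plan is to recover Langlands's argument, handling the two maps separately and starting with the easier one, $\rho$.

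\emph{The map $\rho$.} When $F$ is local, $\rho$ is tautological: from $T_E\cong\bbg_m^{\rk X}$ one has $T(E)=\Hom_\bbz(X,E^*)$, hence $T(F)=\Hom_\ga(X,E^*)=\Hom_\ga(X,C_E)$, and $\rho$ is the induced identification of character groups, so it is an isomorphism. When $F$ is global, the same functor-of-points description gives $T(\bba_F)=\Hom_\ga(X,\bba_E^*)$ and $T(F)=\Hom_\ga(X,E^*)$; applying $\Hom_\ga(X,-)$ to the exact sequence $0\to E^*\to\bba_E^*\to C_E\to 0$ of $\ga$-modules (and using that $\Hom_\bbz(X,-)$ is exact) produces a canonical injection $\iota\colon T(\bba_F)/T(F)\hookrightarrow\Hom_\ga(X,C_E)$ whose cokernel is $\ker\!\bigl(H^1(\ga,T(E))\to H^1(\ga,T(\bba_E))\bigr)$. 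By Shapiro's lemma the latter group is $\bigoplus_v H^1\!\bigl(\Gal(E_w/F_v),T(E_w)\bigr)$, so $\coker\iota$ is the finite group of locally trivial classes in $H^1(\ga,T(E))$. Since $\iota$ is injective and $\bbc^*$ is divisible, $\rho:=\iota^{*}$ is surjective, with $\ker\rho\cong\Hom_c(\coker\iota,\bbc^*)$ finite; a diagram chase identifies $\ker\rho$, transported through $\Lambda$, with the locally trivial classes of Definition \ref{def:LocTrivClasses}.

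\emph{The map $\Lambda$.} I would first construct a natural homomorphism $\Lambda\colon H^1_c(\we,\dT)\to\Hom_c(\Hom_\ga(X,C_E),\bbc^*)$, functorial in the $\ga$-lattice $X$ and compatible with the connecting homomorphisms attached to short exact sequences of tori. A continuous cocycle $\phi$ restricts on the normal subgroup $C_E\subseteq\we$ to a $\ga$-equivariant continuous homomorphism $\phi|_{C_E}\in\Hom_\ga(C_E,\dT)=\Hom_\ga\!\bigl(\dX,\Hom_c(C_E,\bbc^*)\bigr)$, which one contracts against $f\in\Hom_\ga(X,C_E)$ using the canonical $\ga$-invariant element of $X\otimes\dX$ and the evaluation pairing $C_E\otimes\Hom_c(C_E,\bbc^*)\to\bbc^*$; the piece of $H^1_c(\we,\dT)$ inflated from $H^1(\ga,\dT)$ is absorbed by cupping with the fundamental class in $H^2(\ga,C_E)$ that defines $\we$ (a Tate--Nakayama ingredient). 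Coboundaries restrict trivially to $C_E$, so $\Lambda$ passes to cohomology. To prove $\Lambda$ is an isomorphism I would use dévissage to induced tori: for $T=\Res_{E'/F}\bbg_m$ with $F\subseteq E'\subseteq E$ (so $X=\bbz[\ga/\Gal(E/E')]$), Shapiro's lemma identifies $H^1_c(\we,\dT)$ with $H^1_c(W_{E/E'},\bbc^*)=\Hom_c(W_{E/E'}^{\mathrm{ab}},\bbc^*)$, the reciprocity map gives $W_{E/E'}^{\mathrm{ab}}\cong C_{E'}$, Hilbert 90 gives $C_{E'}=C_E^{\Gal(E/E')}=\Hom_\ga(X,C_E)$, and one checks $\Lambda$ is the resulting class field theory isomorphism; for a general lattice $X$, choose a surjection from a permutation lattice $P=\bigoplus_i\bbz[\ga/\ga_{\lambda_i}]$, dualize to a short exact sequence $0\to\dT'\to\dT_P\to\dT\to 0$ with $\dT_P$ induced, and propagate the isomorphism along the comparison of the two long exact sequences (in $H^*_c(\we,-)$ of the dual tori on one side, and in $\Hom_\ga(-,C_E)$ followed by $\Hom_c(-,\bbc^*)$ on the other), the connecting terms being controlled by Tate's theorem on class formations.

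The homological bookkeeping --- Shapiro, the long exact sequences, the five lemma / dimension shifting --- is largely routine. The main obstacle is, first, the honest construction of $\Lambda$ as a natural transformation, and second, the verification that it is compatible with all the connecting maps; this is where class field theory genuinely enters, through the reciprocity map in the induced-torus case and, in the global setting, through the compatibility of the local fundamental classes of the $E_w/F_v$ with the global fundamental class, which is also what governs the finite locally trivial kernel of $\rho$. I emphasize that in this argument the topologies on the various $\Hom$- and cohomology groups are irrelevant: $\Lambda$ emerges only as an abstract isomorphism of abelian groups, and it is precisely this gap that the remainder of the paper is devoted to closing.
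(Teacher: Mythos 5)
The paper does not prove Theorem~\ref{thm:LangTori}: it is taken wholesale from Langlands \cite{La} (with \cite{Bi} as a modern reference for the construction of $\Lambda$), and the paper only later unravels the \emph{topological} content of $\Lambda$ and $\rho$ (Sections \ref{s:CohoSide} and \ref{sec:LangCont}). So there is no in-paper argument to compare against, and your proposal has to be judged as a reconstruction of Langlands's original proof.

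Your treatment of $\rho$ is essentially right: applying $\Hom_{\bbz}(X,-)$ to $0\to E^*\to \bba_E^*\to C_E\to 0$ and taking $\ga$-invariants identifies $\coker\iota$ with $\ker\bigl(H^1(\ga,T(E))\to H^1(\ga,T(\bba_E))\bigr)$, and Shapiro turns the target into the product of local $H^1$'s, so $\coker\iota\cong\Sha^1(F,T)$, which is finite; divisibility of $\bbc^*$ then gives surjectivity of $\rho$ with finite kernel $\Hom(\coker\iota,\bbc^*)$. (You should still say a word on why the identification of $\Lambda^{-1}(\ker\rho)$ with the locally trivial parameters of Definition~\ref{def:LocTrivClasses} is more than a formal diagram chase — it uses the local-global compatibility of $\Lambda$, i.e.\ of the local and global reciprocity maps — but the structure is sound.)

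The construction of $\Lambda$ as you describe it, however, has a real gap. Restriction to $C_E$ gives a map $H^1_c(\we,\dT)\to \Hom_\ga(C_E,\dT)$ fitting into the inflation--restriction sequence
\[
0\to H^1(\ga,\dT)\to H^1_c(\we,\dT)\to \Hom_\ga(C_E,\dT)\to H^2(\ga,\dT),
\]
so "restrict to $C_E$ and then contract against $f\in\Hom_\ga(X,C_E)$" kills the whole inflated subgroup $H^1(\ga,\dT)$, which is in general nonzero, and also does not reach classes outside the kernel of $d_2$. Saying that the inflated piece "is absorbed by cupping with the fundamental class" is not a construction — it names the ingredient without saying how it enters. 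In Langlands's (and Birkbeck's) construction the fundamental class does not repair a restriction map; it appears in a fundamentally different place, namely in the Tate--Nakayama identification $H_1(\we,\dX)\cong H_1(C_E,\dX)^\ga\cong\Hom_\ga(X,C_E)$, after which $\Lambda$ is the map induced by the \emph{cap-product pairing} $H^1_c(\we,\dT)\times H_1(\we,\dX)\to\bbc^*$ (this is exactly what the paper later invokes in the proof of Theorem~\ref{thm:ApexThm}). Your single-step d\'evissage also needs care: applying the five lemma to $0\to\dT'\to\dT_P\to\dT\to 0$ presupposes the conclusion for the non-induced torus $T'$, so you either need a two-step (flasque/coflasque) resolution, or to prove surjectivity and injectivity separately using a surjection onto $X$ and an injection from $X$ into permutation lattices. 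The skeleton (Shapiro in the induced case, class field theory on $W_{E/E'}^{\mathrm{ab}}$, naturality, long exact sequences) is the right one, but as written the two central steps — the honest definition of $\Lambda$ and the d\'evissage — do not yet close.
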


We now describe the locally trivial classes. For a local or global field $F$, let $(W_F,\varphi,\{r_E\})$ be the absolute Weil group of $F$ (see \cite{Ta2}). Here, the index $E$ varies over all finite extensions of $F$ inside a fixed separable closure $\bar F/F$, $\varphi$ is a continuous map with dense image $W_F\to\Gamma_F$ (where $\Gamma_F$ is the absolute Galois group), and $r_E:C_E\to W_E^{\textup{ab}}$ is the reciprocity isomorphism, where for each finite extension $\ef$ we have $W_E =  \varphi^{-1}(\Gamma_E)$. 

When $F$ is global, let $\Pl(F)$ denote the set of places of $F$. From the local-global relationship \cite[(1.6.1)]{Ta2}, for each $v\in \Pl(F)$, the inclusion $i_v:\bar F \to \bar F_v$
induces a unique (up to inner automorphisms of $W_F$) continuous map $\theta_v:W_{F_v}\to W_F$ such that $\varphi\theta_v=i_v\varphi_v$,
where we also denote by $i_v:\Gamma_{F_v}\to \Gamma_F$ the induced map between the absolute Galois groups of $F_v$ and $F$. We have the following (topological version of a) well-known result:

\begin{prop}\label{prop:LocTriv}
For $F$ local or global, pulling-back via the quotient map $W_{F}\to W_{E/F}$ yields an isomorphism of topological groups
$\Inf:H_c^1(\we, A) \stackrel{\approx}{\to} H_c^1(W_F,A)$ for any Hausdorff $W_F$-module $A$ in which $W_E$ acts trivially.
\end{prop}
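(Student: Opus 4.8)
The plan is to establish the claimed isomorphism first at the level of continuous cocycles, and only afterwards descend to the cohomology quotients. Write $q\colon W_F\to\we$ for the given quotient map; by the construction of the relative Weil group \cite{Ta2} its kernel is $N:=\overline{[W_E,W_E]}$, the closure of the commutator subgroup of $W_E$ (one quotients out exactly this subgroup because $C_E\cong W_E^{\mathrm{ab}}$), and in particular $N\trianglelefteq W_F$ acts trivially on $A$. First I would record the purely algebraic content. Since $W_E$, hence $N$, acts trivially on the abelian Hausdorff group $A$, the restriction to $W_E$ of any continuous $1$-cocycle $c\colon W_F\to A$ is a continuous homomorphism $W_E\to A$; it therefore factors through $W_E^{\mathrm{ab}}$ and vanishes on $N$. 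The cocycle identity then forces $c$ to be constant on each coset of $N$, so $c$ descends to a continuous $1$-cocycle $\bar c$ of $\we$ with $\bar c\circ q=c$. This shows that $\Inf\colon Z_c^1(\we,A)\to Z_c^1(W_F,A)$, $\bar c\mapsto\bar c\circ q$, is bijective (injectivity being immediate from surjectivity of $q$); and, because the $W_F$-action on $A$ already factors through $\we$, the same map carries $B_c^1(\we,A)$ bijectively onto $B_c^1(W_F,A)$.

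Next I would upgrade this to a statement about topological groups. Continuity of $\Inf$ is the standard fact that precomposition with a fixed continuous map is continuous for compact-open topologies: the preimage of a subbasic set $\{c:c(K)\subseteq U\}$ (with $K\subseteq W_F$ compact, $U\subseteq A$ open) is $\{\bar c:\bar c(q(K))\subseteq U\}$, and $q(K)$ is compact. For the reverse continuity I would invoke the well-known lifting property: since $W_F$ is locally compact and $N$ is closed, every compact $K'\subseteq\we$ is of the form $q(L)$ for some compact $L\subseteq W_F$ (cover $K'$ by images of relatively compact open subsets of $W_F$, pass to a finite subcover, and intersect the closure of their union with $q^{-1}(K')$). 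For such an $L$, and for any $c\in Z_c^1(W_F,A)$ with descent $\bar c$, one has $\bar c(K')=\bar c(q(L))=c(L)$; hence $\Inf$ sends the subbasic set $\{\bar c:\bar c(K')\subseteq U\}$ exactly onto $\{c:c(L)\subseteq U\}$, so $\Inf$ is an open map. Consequently $\Inf$ is an isomorphism of topological groups $Z_c^1(\we,A)\to Z_c^1(W_F,A)$ restricting to such an isomorphism on the subgroups of coboundaries, and it therefore descends to the asserted isomorphism $H_c^1(\we,A)\to H_c^1(W_F,A)$ of the quotient topological groups.

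I expect the only genuine obstacle to be the openness of $\Inf$ (equivalently, continuity of its inverse): this is precisely where one needs the compact-lifting property of $q$ in order to match the subbasic compact-open sets on the two sides, and it is the one step that truly uses local compactness of $W_F$ rather than pure homological algebra. An alternative route for this step, available whenever $A$ is second countable and completely metrizable, would be to note that $\Inf$ is then a continuous surjective homomorphism of Polish groups and apply the open mapping theorem; the compact-lifting argument has the advantage of covering every Hausdorff module $A$. Everything else should reduce to routine verification with the cocycle and coboundary identities and with the identification $\ker q=\overline{[W_E,W_E]}$ furnished by (local) class field theory.
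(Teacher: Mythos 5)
Your proof is correct and follows the same overall strategy as the paper's: first establish the bijection $\Inf$ at the level of continuous cocycles (and coboundaries), then match the compact-open topologies, then descend to the quotients. There are two small places where you diverge. First, you re-derive the algebraic bijectivity from scratch — observing that the restriction of a continuous cocycle $c$ to $W_E$ is a homomorphism, hence kills $W_E^c$, and then using the cocycle identity to conclude that $c$ is constant on $W_E^c$-cosets; the paper simply cites \cite[Proposition~3.1]{Bi} for this, and your argument is precisely the content of that result. Second, and more substantively, for the continuity of $\Inf^{-1}$ the paper observes that $W_E^c$ is \emph{compact} (as a closed subgroup of $W_E^1 = \ker(|\cdot|_E)$), which makes $q\colon W_F\to\we$ proper, so that one may take $L=q^{-1}(K')$ directly; you instead invoke the general compact-lifting lemma for a quotient of a locally compact group by a closed (not necessarily compact) normal subgroup. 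Your route is a touch more elementary in that it does not use the structure theory of Weil groups to know $W_E^c$ is compact — it only needs $W_E^c$ closed, which is immediate from the definition — whereas the paper's version is quicker once compactness is recalled. Both arguments are sound, and your explicit treatment of the descent from cocycle groups to cohomology quotients fills in a step the paper leaves implicit.
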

\begin{proof} The argument in \cite[Proposition 3.1]{Bi} shows that 
the inflation map is a bijective homomorphism.  Recall that  $W_{\ef} \cong W_F/W_E^c$, where $W_E^c$ denotes the closure of the subgroup generated by all
commutators in $W_E$.
Since $W_E$ is isomorphic to the (semi-)direct product of its normal subgroup $W_E^1$ and 
$V_E\subset \mathbb{R}_+$, 
where $W_E^1$ denotes the kernel of the absolute value homomorphism $W_E \to \mathbb{R}_+$ (see \cite[(1.4.6)]{Ta2} and also Section~\ref{sec:absolutevals}, below), 
it follows that that $W_E^c\subset W_E^1$. 
In particular, $W_E^c$ is compact.
Hence, the compact-open topologies defined on $H_c^1(\we,A)$ and $H_c^1(W_F,A)$
coincide under this bijection, turning it into a  homeomorphism.
\end{proof}

Pulling-back via the continuous maps $\theta_v:W_{F_v}\to W_F$, for each $v$, yields the map 
\[
\prod_{v\in \Pl(F)}\Res_v:H_c^1(W_F,\dT) \to \prod_{v\in\Pl(F)} H_c^1(W_{F_v},\dT).
\]

\begin{defn}\label{def:LocTrivClasses}
We say that a Langlands parameter $\varphi \in \Phi(T)\cong H^1_c(\we,\dT)$ is \emph{locally trivial} if its image in $\prod_{v\in\Pl(F)} H_c^1(W_{F_v},\dT)$ is trivial.
\end{defn}

\section{Remarks on the compact-open topology}

In this paper we will deal a lot with the compact-open topology on the space of continuous functions between two topological spaces, so we gather in this section some relevant information about this topology, for convenience of the reader.

Let $A, B$ be topological spaces. Given any $K\subset A$ compact and $W\subset B$ open, we let $V(K,W) = \{f:A\to B \textup{ continuous }\mid f(K)\subset W\}$. The compact-open topology on the space $\mathscr{C}(A,B)$ of continuous functions between $A$ and $B$ is the topology generated by the sub-basis $\{V(K,W)\mid K \subset A\textup{ compact and }W\subset B\textup{ open}\}$. 

Furthermore, recall that a topological space $A$ is called hemicompact if there is a sequence $K_1, K_2,\ldots$ of compact subsets of $A$ such that $A = \cup_{n=1}^\infty K_n$ and any compact subset $K$ of $A$ is contained in a finite union $K_{n_1}\cup\cdots\cup K_{n_p}$ of  compact sets in that sequence. The multiplicative groups $C_F$ (with $F$ local or global) and finite extensions of them are examples of hemicompact topological spaces.

\begin{prop}\label{p:CompOpen}
The compact-open topology on $\mathscr{C}(A,B)$ satisfies the following properties.
\begin{itemize}
    \item[(a)] If $B$ is a metric space, then a sequence $(f_n)_{n\in \bbz_{\geq 0}}$ in $\mathscr{C}(A,B)$ converges in the compact-open topology to a continuous function $f$ if, and only if, $f_n$ converges uniformly to $f\in \mathscr{C}(A,B)$ on any compact subset $K$ of $A$.
    \item[(b)] If $B$ is a metric space, then $\mathscr{C}(A,B)$ is metrizable if and only if $A$ is hemicompact.
    \item[(c)] If $B$ is a metric space and $A$ is hemicompact, then the compact-open topology on $\mathscr{C}(A,B)$ is entirely
determined by sequential uniform convergence on compact sets.
    \item[(d)] If $A, B$ are topological groups with $A$  locally compact Hausdorff and $B$ Hausdorff, then the space of continuous homomorphisms $\Hom_c(A,B)$ is a closed subspace of $\mathscr{C}(A,B)$.
    \item[(e)] If $A, B_1, B_2$ are topological groups with $A$ locally compact Hausdorff, and $B_1, B_2$ metric spaces then
    \[
    \Hom_c(A,B_1\times B_2) \approx \Hom_c(A,B_1) \times \Hom_c(A,B_2).
    \]
    \item[(f)] If $A_1,A_2,B$ are topological groups with $A_1,A_2$ locally compact Hausdorff and $B$ an abelian group
with a metric topology, then
    \[
    \Hom_c(A_1\times A_2,B) \approx \Hom_c(A_1,B) \times \Hom_c(A_2,B).
    \]
\end{itemize}
\end{prop}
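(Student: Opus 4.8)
The plan is to prove the six items in turn, bootstrapping the later ones off the earlier ones and off a handful of standard properties of the compact-open topology. For (a), the key observation is that for $B$ metric and $K\subset A$ compact the compact-open topology on $\mathscr{C}(K,B)$ is exactly the topology of uniform convergence, while each restriction map $r_K\colon\mathscr{C}(A,B)\to\mathscr{C}(K,B)$ is continuous with the sub-basic sets $V(K,W)$ arising as $r_K$-preimages of uniformly open sets; here one uses that if $g(K)\subset W$ with $K$ compact and $W$ open then some uniform $\varepsilon$-ball about $g$ still lies in $V(K,W)$. Consequently the compact-open topology on $\mathscr{C}(A,B)$ is the initial topology for the family $\{r_K\}$, so $f_n\to f$ in it iff $r_K(f_n)\to r_K(f)$ for every compact $K$, i.e.\ iff $f_n\to f$ uniformly on compacta.

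For (b), the ``if'' direction is the classical construction: fixing a cofinal exhausting sequence $K_1\subset K_2\subset\cdots$ of compacta as in the definition of hemicompactness, the formula $d(f,g)=\sum_{n\ge 1}2^{-n}\min\{1,\sup_{K_n}d_B(f,g)\}$ defines a metric which by (a) induces the compact-open topology. The ``only if'' direction is the converse of Arens's metrization theorem \cite{Ar}: from a countable neighbourhood basis of $\mathscr{C}(A,B)$ at a constant function one extracts a countable cofinal family of compacta, so $A$ is hemicompact. Item (c) is then immediate: by (b) the space $\mathscr{C}(A,B)$ is metrizable, hence its topology is recovered from its convergent sequences, and by (a) those are precisely the sequences converging uniformly on compacta.

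For (d), I would use that every point-evaluation $\mathrm{ev}_a\colon\mathscr{C}(A,B)\to B$ is continuous, since $\mathrm{ev}_a^{-1}(W)=V(\{a\},W)$ is sub-basic open; thus for fixed $x,y\in A$ the map $f\mapsto\bigl(f(xy),f(x)f(y)\bigr)$ into $B\times B$ is continuous, so $\{f:f(xy)=f(x)f(y)\}$ is closed as $B$ is Hausdorff, and intersecting over all $(x,y)$ shows $\Hom_c(A,B)$ is closed in $\mathscr{C}(A,B)$. For (e) and (f) I would rely on three elementary facts about the compact-open topology: pre- and post-composition with continuous maps are continuous operations on function spaces, and the canonical map $\mathscr{C}(X,B_1\times B_2)\to\mathscr{C}(X,B_1)\times\mathscr{C}(X,B_2)$ is a homeomorphism. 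For (e), this last homeomorphism carries homomorphisms to homomorphisms and back (as $f$ is a homomorphism into $B_1\times B_2$ iff each $\mathrm{pr}_i\circ f$ is), so it restricts to a homeomorphism of the subspaces of homomorphisms, which are closed by (d). For (f), the mutually inverse assignments are $f\mapsto\bigl(f|_{A_1\times\{e\}},f|_{\{e\}\times A_2}\bigr)$ and $(f_1,f_2)\mapsto\bigl((a_1,a_2)\mapsto f_1(a_1)f_2(a_2)\bigr)$: commutativity of $B$ is what makes the second assignment produce a homomorphism, the identity $(a_1,a_2)=(a_1,e)(e,a_2)$ is what makes the two assignments mutually inverse, and continuity holds because the first is a pair of precomposition (restriction) maps while the second is the pointwise $B$-product of the precompositions $f_i\mapsto f_i\circ\mathrm{pr}_i$, continuous via the product homeomorphism followed by postcomposition with the multiplication map of $B$.

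The only genuinely delicate point I anticipate is the converse half of (b), i.e.\ the converse to Arens's metrization theorem; granting it by citation, everything else is a routine assembly of standard compact-open facts together with the bookkeeping in (d)--(f).
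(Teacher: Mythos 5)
Your proof is correct, and in two places it takes a genuinely different (and in one case lighter) route than the paper. For (d), the paper expresses $\Hom_c(A,B)$ as $F^{-1}(\{c_e\})$ for a single map $F\colon\mathscr{C}(A,B)\to\mathscr{C}(A\times A,B)$ and needs local compactness of $A$ (via continuity of the evaluation map) to prove $F$ continuous; you instead realize $\Hom_c(A,B)$ as $\bigcap_{(x,y)}\{f:f(xy)=f(x)f(y)\}$, each slice being the pullback of the closed diagonal of $B\times B$ under the continuous map $f\mapsto(f(xy),f(x)f(y))$. Since point evaluations are continuous with no hypothesis on $A$, your argument in fact dispenses with the local compactness assumption entirely, a small but real improvement. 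For (f), the paper (following Hewitt--Ross) constructs $\theta$ and verifies the homeomorphism by direct $\varepsilon$-estimates on the neighborhood bases $V(K,\varepsilon)$; you instead build both $\theta$ and $\theta^{-1}$ from precomposition, postcomposition, and the product homeomorphism, which is cleaner conceptually. Items (a)--(c) and (e) follow the paper's route, with (a)--(c) spelled out rather than left as citations to Arens. One small caveat: the converse half of (b), which you flag and cite, is indeed the only place where a nontrivial external input is being invoked without proof; the paper handles it the same way (citing Arens's Theorems 7 and 8), so that is consistent.
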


\begin{proof}
    Item (a) is \cite[Theorem 6]{Ar} and (b) \cite[Theorems 7 and 8]{Ar}. Item (c) is a consequence of (a) and (b). For item (d), note that $\Hom_c(A,B) = F^{-1}(\{c_e\})$ where $c_e$ is the constant homomorphism $c_e:A\times A\to B$ that maps $(a_1,a_2)\mapsto  e_B$  for all $(a_1,a_2)\in A\times A$ and where
 $F:\mathscr{C}(A,B) \to \mathscr{C}(A\times A,B)$ is given by
    \[
    F(f)(a_1,a_2) = f(a_1)f(a_2)f(a_1a_2)^{-1}.
    \] 
    The assumption on $B$ implies that $\{c_e\}$ is closed (see \cite[Theorem 1]{Ar}), while the assumption on $A$ implies that evaluation map $\mathscr{C}(A,B)\times A \to B,(f,a)\mapsto f(a)$ is continuous  (see \cite[Theorem 2]{Ar}), from which $F$
    is continuous, since $F = \mu_*\circ\epsilon$ with $\epsilon:\mathscr{C}(A,B) \to \mathscr{C}(A\times A,B\times B\times B)$ continuous and given by
    \[
    \epsilon(f)(a_1,a_2) = (f(a_1),f(a_2),f(a_1a_2)),
    \]   
    and $\mu_*:\mathscr{C}(A\times A,B\times B\times B) \to \mathscr{C}(A\times A,B)$ is the continuous map 
    \[g\mapsto \mu_*(g)(a_1,a_2) = \mu(g(a_1,a_2)),\] 
    obtained by composing with  $\mu:B\times B \times B \to B,(b_1,b_2,b_3)\mapsto b_1b_2b_3^{-1}$.
    
    For (e), it is shown in \cite[Chapitre X, \textsection1.4, Corollaire 1]{Bou} that $\mathscr{C}(A,B_1\times B_2)$ is homeomorphic to $\mathscr{C}(A,B_1) \times \mathscr{C}(A,B_2)$ under the milder condition that $B_1,B_2$ are uniform spaces and $A$ is any topological space, so the claim in this item follows from (d). For (f) we adapt the argument of \cite[Theorem (23.18)]{HR} in the case when $B$ is the unit circle. Let $\theta:\Hom_c(A_1,B) \times \Hom_c(A_2,B) \to \Hom_c(A_1\times A_2,B)$ be defined by
    \[
    \theta(f_1,f_2)(a_1,a_2) = f_1(a_1)f_2(a_2).
    \]
    This map is an isomorphism of  groups (see the first paragraph of the proof of \cite[Theorem (23.18)]{HR}). Now, given $K\subset A_1\times A_2$ compact and $\varepsilon>0$, let $V(K,\varepsilon)$ denote a neighborhood of the trivial homomorphism in $\Hom_c(A_1\times A_2,B)$ whose elements send $K$ into $B(e_B,\varepsilon)$, the open ball of radius $\varepsilon$ around the identity element $e_B\in B$. Similarly, by using the canonical projections, let $K_j = \textup{pr}_j(K) \subset A_j$ for $j = 1,2$ and let $V(K_j,\varepsilon)$ be the open subset of $\Hom_c(A_j,B)$ of all homomorphisms sending $K_j$ into $B(e_B,\varepsilon)$. Then, as the metric in $B$ can be assumed to be left-invariant (see \cite[Theorem (8.3)]{HR}), we obtain that $\theta(V(K_1,\varepsilon/2)\times V(K_2,\varepsilon/2))\subset V(K,\varepsilon)$. Conversely, if $V(K_j,\varepsilon_j)$ is an arbitrary neighborhood of the trivial homomorphism in $\Hom_c(A_j,B)$, with $K_j$ compact, for $j=1,2$, by setting $K = K_1\cup\{e_{A_1}\}\times K_2\cup\{e_{A_2}\}$, then
    \[
    V(K,\min(\varepsilon_1,\varepsilon_2)) \subset \theta(V(K_1,\varepsilon_1)\times V(K_2,\varepsilon_2))
    \]
    which shows that $\theta$ is a homeomorphism.
    \end{proof}   

\section{Unramified characters}\label{s:unrchar}

In this section, we will recall the definitions of unramified characters of a torus and establish some facts about them. We start by recalling the relevant absolute values in the groups we are interested in.

\subsection{Absolute values}\label{sec:absolutevals}
Suppose first that $F$ is a local field endowed with a normalized absolute value $|\cdot|_F:F\to \bbr_{\geq 0}$ with respect to which $F$ is a complete topological field (and locally compact). When $F$ is non-Archimedean, we let $\kappa_F$ denote its residue field, $q_F = \#\kappa_F$ and let $v_F:F\to \bbz$ denote its discrete valuation. If $F$ is Archimedean (i.e., $F$ is either $\bbr$ or $\bbc$) let $|\alpha|$ denote the usual absolute value in these fields. In each case, the normalized absolute value satisfies
\begin{equation}\label{eq:NormAbsVal}
|\alpha|_F = \left\{
\begin{array}{rl}
|\alpha|, &\textup{if }F=\bbr,\\
|\alpha|^2, &\textup{if }F=\bbc,\\
q_F^{-v_F(\alpha)}, &\textup{if }F\textup{ is non-Archimedean}.
\end{array}
\right.
\end{equation}

In the case when $F$ is a global field with ring of ad\`eles $\bba_F$, for each $v\in\Pl(F)$ we let $|\cdot|_v$ be the normalized absolute value of the local completion $F_v$ of $F$, as in (\ref{eq:NormAbsVal}). Here, any subscript referring to a local field $F_v$ will be only denoted by $v$. If $\alpha = (a_v)_v \in \bba_F$, let  $|\alpha|_F = \prod_v|a_v|_v$ denote the adelic absolute value, which restricts to a homomorphism $|\cdot|_F:\bba^*_F\to \bbr_+$ and induces a homomorphism $C_F\to\bbr_+$, by Artin's product formula. 

\begin{defn}
    For $F$ either local or global, the absolute value homomorphism\footnote{Although the notation \( |\cdot|_{\mathbb{A}_F} \) is customary for the adelic absolute value in the global case, we use \( |\cdot|_F \) to align with the unified notation for the groups \( C_F \).} is denoted by $|\cdot|_F:C_F\to \bbr_+$. We define $C^1_F$ and $V_F$ to be the kernel and the image of $|\cdot|_F$.
\end{defn}

We now summarize the properties of these topological groups.

\begin{prop}\label{p:CFsplitting}
The following assertions hold true.
\begin{itemize}
    \item[(a)] $C_F$ is isomorphic to $V_F\times C_F^1$ both algebraically and topologically. 
    \item[(b)] $C_F^1$ is a compact subgroup of $C_F$.
    \item[(c)] When $F$ is a number field or a local Archimedean field, we have $V_F=\bbr_+$.
    \item[(d)] When $F$ is a global function field or a local non-Archimedean field, we have  $V_F = q_F^{\bbz}$, where $q_F$ is the cardinality of the residue field in the local case and $q_F$ is the cardinality of the field of constants in the global case. 
\end{itemize}
\end{prop}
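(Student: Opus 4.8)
The statement to prove is Proposition~\ref{p:CFsplitting}, which collects the standard structure facts about the idele class group $C_F$ (and its local analogue $F^*$) relative to the absolute value homomorphism. Here is how I would organize the proof.

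\textbf{Plan.} The four assertions are classical, so the proof is really a matter of citing and assembling the right pieces of local and global class field theory, splitting into the local and global cases throughout. For part (b): when $F$ is local non-Archimedean, $C_F^1 = \mathscr{O}_F^*$ is the unit group of the ring of integers, which is profinite hence compact; when $F = \bbr$, $C_F^1 = \{\pm 1\}$; when $F = \bbc$, $C_F^1$ is the unit circle; and when $F$ is global, $C_F^1 = \bba_F^{1}/F^*$ is the norm-one idele class group, whose compactness is precisely the classical theorem equivalent to the finiteness of the class number together with the Dirichlet unit theorem (Fujisaki's lemma / the compactness theorem for the idele class group). For part (c) and (d), one reads off the image $V_F$ directly from the definition~\eqref{eq:NormAbsVal}: in the Archimedean or number-field case the valuation takes all positive real values (indeed already on a single Archimedean place), so $V_F = \bbr_+$; in the non-Archimedean local case the image of $v_F$ is $\bbz$ so $V_F = q_F^{\bbz}$, and in the global function field case one invokes the fact that the degree map on divisors is surjective onto $\bbz$ (equivalently, there exists an idele of any prescribed degree), giving $V_F = q_F^{\bbz}$ again. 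Part (a) then follows formally: $V_F$ is a free group (either $\bbr_+\cong\bbr$, or $q_F^{\bbz}\cong\bbz$), hence the short exact sequence $1 \to C_F^1 \to C_F \xrightarrow{|\cdot|_F} V_F \to 1$ splits, and one checks the splitting is also topological. For $V_F \cong \bbz$ this is automatic since a discrete free group is projective in topological groups (choose any preimage of a generator, e.g. a uniformizer in the local case, a degree-one idele in the global case, and the induced map $V_F \to C_F$ is continuous with closed image, a homeomorphism onto its image). For $V_F \cong \bbr_+$ one produces a continuous section: in the Archimedean and number-field cases embed $\bbr_+$ via an Archimedean place (e.g. $t \mapsto$ the idele that is $t$ at one fixed real place and $1$ elsewhere, or $t \mapsto t^{1/2}$ at a complex place), which is manifestly a continuous homomorphic section, and then $C_F \approx V_F \times C_F^1$ via $(t,c)\mapsto s(t)c$, a continuous bijective homomorphism whose inverse $x \mapsto (|x|_F, x\,s(|x|_F)^{-1})$ is also continuous.

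\textbf{Order of steps.} I would (1) prove (b), handling the four sub-cases, with the global case citing the compactness theorem for the norm-one idele class group; (2) prove (c) and (d) by direct inspection of~\eqref{eq:NormAbsVal} together with, in the function field case, surjectivity of the degree map; (3) deduce (a) from the existence of a continuous section of $|\cdot|_F$, which exists because $V_F$ is either a discrete infinite cyclic group or $\bbr_+$, in both cases a free (hence projective) object admitting an explicit lift, and then verify bicontinuity of the resulting product decomposition using the continuity of $|\cdot|_F$ and of the section.

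\textbf{Main obstacle.} The only genuinely nontrivial input is the compactness of $C_F^1$ in the global case (part (b)) — this is a substantive theorem (Fujisaki's compactness lemma, equivalent to finiteness of class number plus Dirichlet's unit theorem for number fields, and to finiteness of the Picard group of the curve for function fields). Everything else is either a direct computation from the normalization~\eqref{eq:NormAbsVal} or a routine topological-group splitting argument. So in the write-up I would state (b) with a precise reference to the compactness theorem and spend most of the remaining effort making the bicontinuity in (a) explicit, since that is the part actually needed downstream (the decomposition $C_F \approx V_F \times C_F^1$ is used repeatedly).
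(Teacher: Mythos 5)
Your proposal is correct and follows essentially the same route as the paper, which simply cites Weil (for the algebraic and topological splitting in (a)) and Ramakrishnan--Valenza (for (b), (c), (d)); you have unpacked those citations into the underlying classical arguments, all of which are sound. The only nuance the paper flags that you underplay is that the topological splitting in (a) is canonical for number fields (via the Archimedean places) but \emph{not} canonical in the function-field case, since it depends on the arbitrary choice of a degree-one idele class --- your construction is correct but you might make this dependence explicit, as it is revisited in Remark~\ref{rem:GalSplitting}.
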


\begin{proof}
    This is rather well known. We refer to \cite[Theorem 6, Chapter IV]{We} for the algebraic part of (a). In the global case, the continuous section splitting the sequence topologically is canonically defined in the number field case \cite[Corollary 2, p. 75]{We} and not canonical in the function field case \cite[Corollary 1, p. 75]{We}. We refer to \cite[Theorem 5-15]{RV} for (b) and \cite[Theorem 5-14]{RV} for (c) and (d) (see also \cite[Chapter VII, \textsection 2]{NSW}).
\end{proof}

\begin{rem}\label{rem:GalSplitting}
    We note that under the finite Galois extension $E/F$, the norm $|\cdot|_E$ is $\Gamma_\ef$-invariant. The (algebraic and topological) splitting $C_E = C_E^1\times V_E$ of Proposition \ref{p:CFsplitting}(a) is not, in general, a splitting as $\Gamma_\ef$-modules. For example, in the local non-Archimedean case, a $\Gamma_\ef$-splitting exists if and only if $E/F$ is an unramified
    extension. On the other hand, when $V_F\cong\bbr$, we always have a $\Gamma_\ef$-splitting. In the global number field, the splitting described in \cite[Chapter IV \textsection 4]{We}, Corollary 2 to Theorem 5 and Theorem 6, is $\ga$-invariant since the action of the Galois group permutes the Archimedean places.
\end{rem}

Finally, if $F$ is a local or global field, recall (see \cite{Ta2}) that the absolute Weil group $W_F$ comes equipped with an absolute value extending the normalized absolute value $|\cdot|_F$ on $C_F \cong W_F^{\textup{ab}}=W_F/W_F^{\textup{c}}$ (here, $W_F^{\textup{c}}$ means the closure of the subgroup generated by all commutators in $W_F$). Since for any extension $E/F$ we have $W_E^{\textup{c}}\subseteq W_F^{\textup{c}}$, the absolute value on $W_F$ descends
to an absolute value on $\we \cong W_F/W_E^{\textup{c}}$, denoted $|\cdot|_{\ef}$. We let $W^1_\ef$ denote the kernel. Clearly, the image of $|\cdot|_\ef$ is $V_F$, the same as the image of $|\cdot|_F$.

\subsection{Unramified characters of multiplicative groups}
Given a field $F$, local or global, we say $F$ is of $\bbz$- or $\bbr$-type if, respectively, we have $V_F\cong\bbz$ or $V_F\cong \bbr$. The following definition and proposition appear in \cite[Sections 2.3 and 4.4]{Ta1}.

\begin{defn}
An \emph{unramified character} of  $C_F$ is a continuous homomorphism $C_F\to \bbc^*$ which is trivial on $C^1_F$.
\end{defn}

\begin{prop}[\cite{Ta1}]
Any unramified character of $C_F$ is of the type $\chi:\alpha \mapsto |\alpha|_F^s$ for some $s\in \bbc$. If $F$ is of $\bbr$-type, then $s$ is uniquely defined from $\chi$. If $F$ is of $\bbz$-type, $s$ is uniquely defined modulo $2\pi i /\log(q_F)$.
\end{prop}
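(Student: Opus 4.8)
The plan is to reduce everything to the structure of $C_F$ furnished by Proposition \ref{p:CFsplitting}, namely the topological isomorphism $C_F\cong V_F\times C^1_F$ with $C^1_F$ compact. An unramified character $\chi\colon C_F\to\bbc^*$ is by definition trivial on $C^1_F$, so it factors through the quotient $C_F/C^1_F\cong V_F$; that is, $\chi$ is pulled back from a continuous homomorphism $\overline\chi\colon V_F\to\bbc^*$. Since the absolute value homomorphism $|\cdot|_F\colon C_F\to\bbr_+$ also factors as the projection $C_F\to V_F$ followed by the inclusion $V_F\hookrightarrow\bbr_+$, it suffices to show that every continuous homomorphism $V_F\to\bbc^*$ is of the form $x\mapsto x^s$ for some $s\in\bbc$ (where $x^s := \exp(s\log x)$ for a fixed real branch of $\log$ on $\bbr_+$), and to determine the ambiguity in $s$.

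First I would treat the $\bbr$-type case. Here $V_F=\bbr_+$, and I would use the Lie-group isomorphism $\log\colon\bbr_+\xrightarrow{\sim}\bbr$ together with the fact that every continuous homomorphism $\bbr\to\bbc^*$ is of the form $t\mapsto e^{st}$ for a unique $s\in\bbc$ (a standard fact: a continuous homomorphism $\bbr\to\bbc^*$ composed with $\log|\cdot|$ and with a lift of the argument to $\bbr$ gives two continuous additive characters $\bbr\to\bbr$, each linear, hence the whole thing is $t\mapsto e^{at}e^{ibt}=e^{st}$ with $s=a+ib$). Transporting back through $\log$ gives $\overline\chi(x)=x^s$ and hence $\chi(\alpha)=|\alpha|_F^s$, with $s$ unique because $\log$ is injective.

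For the $\bbz$-type case, $V_F=q_F^{\bbz}$ is infinite cyclic, topologically discrete, generated by $q_F$. A continuous homomorphism $\overline\chi\colon q_F^{\bbz}\to\bbc^*$ is determined by the single value $w:=\overline\chi(q_F)\in\bbc^*$, which may be arbitrary; writing $w=q_F^{s}=\exp(s\log q_F)$ — solvable for $s\in\bbc$ since $\exp\colon\bbc\to\bbc^*$ is surjective — gives $\overline\chi(q_F^{n})=q_F^{ns}=|\alpha|_F^{s}$ when $|\alpha|_F=q_F^{-n}$ is reinterpreted correctly, i.e.\ $\chi(\alpha)=|\alpha|_F^{s}$. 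The value $w$ determines $s$ only modulo the kernel of $\exp(\,\cdot\,\log q_F)$, which is $\tfrac{2\pi i}{\log q_F}\bbz$; this is exactly the stated ambiguity.

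The only mild subtlety — and the one place a little care is needed rather than a genuine obstacle — is checking that the factorization $\chi=\overline\chi\circ(\text{projection }C_F\to V_F)$ is legitimate \emph{topologically}, i.e.\ that $\overline\chi$ is continuous on $V_F$ and not merely a set-theoretic homomorphism. This is immediate from Proposition \ref{p:CFsplitting}(a), since the projection $C_F\cong V_F\times C^1_F\to V_F$ is continuous and open, so the induced map on the quotient is a topological isomorphism $C_F/C^1_F\approx V_F$; triviality of $\chi$ on $C^1_F$ then gives a continuous $\overline\chi$ on the quotient. Everything else is the elementary classification of continuous characters of $\bbr_+$ and of $\bbz$ recalled above, so no serious difficulty arises.
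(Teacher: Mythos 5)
Your proof is correct and is the standard argument: the paper gives no proof here, merely citing Tate \cite{Ta1}, and your reduction via the splitting $C_F\cong V_F\times C_F^1$ of Proposition \ref{p:CFsplitting}, followed by the elementary classification of continuous characters of $\bbr_+$ and of the discrete infinite cyclic group $q_F^{\bbz}$, is exactly what Tate's treatment amounts to. The only blemish is the sign confusion around ``$|\alpha|_F=q_F^{-n}$ reinterpreted correctly''; cleaner to observe that the identification $C_F/C_F^1\cong V_F$ sends the class of $\alpha$ to $|\alpha|_F$ itself, so one just needs $\overline\chi(v)=v^s$ for $v\in V_F$, which on the generator $q_F$ pins down $s$ modulo $2\pi i/\log q_F$.
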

We now recall how to extend this notion of unramified characters of the multiplicative groups $C_F$ to any torus.

\subsection{Unramified characters: local setting}
In this section, $F$ is a local field and $T$ is a torus defined over  $F$. If $X = \Hom(T_E,\bbg_m)$ is the character lattice of $T$
and $\ef$ is a Galois
extension splitting $T$, recall (e.g., from \cite[Section 8.11]{Bo}) that $X^{\Gamma_{\ef}}$ is the lattice of rational characters of $T$.
Note that each $\chi \in X^\ga$ defines a continuous homomorphism 
$|\chi|_F:T(F)\to V_F\subseteq \bbr_+$ via $|\chi|_F:t \mapsto |\chi(t)|_F$.
Let $T(F)^1 = \cap_\chi\ker(|\chi|_F)$, with $\chi \in X^\ga$.

\begin{defn}\label{def:LocUnrChar}
A continuous homomorphism $T(F) \to \bbc^*$ is called an \emph{unramified character} if it is trivial on $T(F)^1$.
Denote by $X_{T}$ the group of all unramified characters of $T$. 
\end{defn}

The group $T(F)^1$ plays a crucial role in this theory, and it is important to further characterize it. As we will see below, this group is the maximal compact subgroup of $T(F)$. However, we will delay its study until after we describe the space of complex characters of the group $\Hom_{\Gamma_\ef}(X,C_E)$, as this latter group can be treated in a more unified manner for both local and global fields. 

\subsection{Unramified characters: global setting}\label{s:GlobUnramif}

In the global case, given a torus $T$ defined over a global field $F$ and splitting over a finite Galois extension $E/F$, 
any element $\chi \in X^\ga$ defines, for each $v\in \Pl(F)$, an algebraic homomorphism $\chi_v:T_v\to F_v^*,t_v\mapsto \chi_v(t_v)$ and a continuous homomorphism $|\chi|_F:T(\bba_F) \to V_F\subseteq \bbr_+$
defined by
\[
(t_v)_v\mapsto \prod_v|\chi_v(t_v)|_v.
\]

\begin{defn}\label{def:GlobUnrChar}
Given $T$ defined over $F$ and splitting over $E/F$, let $T(\bba_F)^1 = \cap_\chi \ker|\chi|_F$
with $\chi$ running over $X^\ga$. An \emph{unramified (automorphic) character} of $T$ is a continuous homomorphism $T(\bba_F)/T(F)\to \bbc^*$ trivial on $T(\bba_F)^1$. The group of all such characters is denoted $X_{T}$.
 \end{defn}

It is well known \cite[Section I.1.4]{MW} that the group \(X_T\) of unramified characters is a connected abelian Lie group with a complex analytic structure. We will recall this in more detail in the next subsection.

\subsection{Compactness and identity components} In this section, we shall study some topological features of the space $\Hom_c(\Hom_{\Gamma_\ef}(X,C_E),\bbc^*)$.  
In light of Langlands's result, Theorem \ref{thm:LangTori}, the information extracted in this section will provide a proof of Theorem \ref{thm:mainThm0}. To ease notations, let us make the convention
\begin{equation}\label{eq:TCF}
T_{C_F} := \Hom_{\Gamma_\ef}(X,C_E).    
\end{equation}
We assume here that the field $F$ is either local or global with finite Galois extension $E/F$ splitting $T$. The group $T_{C_F}$ is a locally compact topological group with topology determined by that of $C_E$. 
It is known that when \(F\) is local we have an identification 
\(T_{C_F}= T(F)\) while for \(F\) global we have a closed inclusion with finite-index \(T(\bba_F)/T(F) \subset T_{C_F}\) (see \cite[p. 245]{La}). 

It is convenient to extend the notion of the group of unramified characters to $T_{C_F}$ as well.
If $\chi\in X^{\Gamma_\ef}$ is a rational character of $T$, then for any $t\in T_{C_F}$, we have $\chi(t)\in C_F$ and hence $|\chi|_F = |\cdot|_F\circ \chi$ is a continuous character of $T_{C_F}$. Similarly to the definitions in the local and global cases, let us write
\begin{equation}\label{eq:TCF-one}
   T_{C_F}^1 = \cap_{\chi\in X^{\Gamma_\ef}} \ker(|\chi|_F). 
\end{equation}
Finally, we say that a continuous character $\phi$ of $T_{C_F}$ is \emph{unramified} if $\phi$ vanishes on $T_{C_F}^1$, and we denote by  
\[\widetilde X_{T}:=\Hom_c(T_{C_F}/T_{C_F}^1,\bbc^*)\] the 
group of unramified characters of $T_{C_F}$.
\begin{prop}\label{prop:MaxCompact}
    For $F$ local or global, 
    we have
    that $T_{C_F}^1$ is the maximal compact subgroup of $T_{C_F}$.
\end{prop}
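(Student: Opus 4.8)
The plan is to exhibit $T_{C_F}^1$ as the kernel of a continuous homomorphism into a free abelian group (or a vector group), so that it is closed, and then to show directly that it is compact and that it contains every compact subgroup. Recall that $T_{C_F} = \Hom_{\Gamma_\ef}(X, C_E)$, and since $X$ is a finitely generated $\Gamma_\ef$-module, a choice of generators realizes $T_{C_F}$ as a closed subgroup of a finite product $C_E^n$. By Proposition~\ref{p:CFsplitting}(a),(b), $C_E \approx V_E \times C_E^1$ with $C_E^1$ compact, so $C_E^n$ is a product of finitely many compact groups and finitely many copies of $V_E$ (which is either $\bbr_+$ or $q_E^\bbz$). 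Thus $T_{C_F}$ is a closed subgroup of such a group, and we must identify $T_{C_F}^1$ within it.

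First I would observe that $T_{C_F}^1$ is closed: by definition it is the intersection $\cap_{\chi \in X^{\Gamma_\ef}} \ker(|\chi|_F)$ of kernels of continuous homomorphisms $|\chi|_F \colon T_{C_F} \to \bbr_+$, each of which is closed. Next, I would show $T_{C_F}/T_{C_F}^1$ is \emph{discrete-or-vector} and in particular has no nontrivial compact subgroup: the collection of maps $|\chi|_F$, $\chi \in X^{\Gamma_\ef}$, assembles into a single continuous homomorphism $T_{C_F} \to \Hom(X^{\Gamma_\ef}, V_F)$, whose kernel is exactly $T_{C_F}^1$; since $X^{\Gamma_\ef}$ is a lattice of some rank $r$ (the rank of the maximal split subtorus), this target is $V_F^r$, a closed subgroup of $(\bbr_+)^r$, hence torsion-free with no nontrivial compact subgroups. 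Therefore any compact subgroup of $T_{C_F}$ maps to $\{1\}$ and lies in $T_{C_F}^1$; this already gives that $T_{C_F}^1$ contains every compact subgroup.

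It remains to prove $T_{C_F}^1$ is itself compact. Here I would use the structural description above. Write $T_{C_F} \hookrightarrow C_E^n \approx (V_E)^n \times (C_E^1)^n$ as a closed subgroup. An element of $T_{C_F}$ lies in $T_{C_F}^1$ precisely when all the maps $|\chi|_F$ vanish on it; pulling this back, one checks that the ``$V_E$-coordinates'' of such an element are constrained to lie in a subgroup on which the relevant absolute-value forms vanish. The key point is that the $\Gamma_\ef$-equivariance condition defining $T_{C_F}$, combined with the vanishing of $|\chi|_F$ for all rational characters $\chi$, forces the projection of $T_{C_F}^1$ to $(V_E)^n$ to be bounded — indeed, the $\Gamma_\ef$-equivariant homomorphisms $X \to V_E$ correspond (after $\otimes \bbr$ or after passing to $q_E^\bbz$) to $\Hom_{\Gamma_\ef}(X, \bbr) = \Hom(X_{\Gamma_\ef}, \bbr)$, and composing with the inclusion $X^{\Gamma_\ef} \hookrightarrow X$ gives a map whose vanishing forces the equivariant part to be trivial, because a $\Gamma_\ef$-equivariant map $X \to \bbr$ that kills $X^{\Gamma_\ef}$ must be zero (averaging over $\Gamma_\ef$ shows its restriction to invariants determines it). Hence $T_{C_F}^1$ is contained in a closed subgroup of the compact group $(C_E^1)^n$, so it is compact, completing the proof.

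\textbf{Main obstacle.} I expect the delicate step to be the last one: showing that the $V_E$-part of $T_{C_F}^1$ is trivial (or at least bounded), since this is where the interplay between $\Gamma_\ef$-equivariance and the vanishing of the \emph{rational} (i.e.\ $\Gamma_\ef$-invariant) characters must be used carefully. Concretely, one must rule out $\Gamma_\ef$-equivariant homomorphisms $X \to V_E$ that survive after imposing $|\chi|_F = 1$ for all $\chi \in X^{\Gamma_\ef}$; the clean way to do this is to tensor with $\bbr$, reduce to linear algebra over $\bbr$ with a finite group action (so that $X \otimes \bbr = (X\otimes\bbr)^{\Gamma_\ef} \oplus N$ splits), and check that the pairing between equivariant maps and invariant characters is nondegenerate on the invariant part. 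The function-field / non-Archimedean case ($V_E = q_E^\bbz$) then follows by observing that $q_E^\bbz$ is a lattice in $\bbr_+$ and the same constraints hold, so the relevant subgroup of $(q_E^\bbz)^n$ is $\{0\}$, hence trivially compact.
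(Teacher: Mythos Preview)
Your proposal is correct and rests on the same key idea as the paper's proof: the averaging (norm) trick over $\Gamma_{E/F}$. However, the paper's execution is considerably more direct. Rather than embedding $T_{C_F}$ into $C_E^n$ and arguing about projections to $(V_E)^n$, the paper simply proves the identity
\[
T_{C_F}^1 \;=\; \Hom_{\Gamma_{E/F}}(X, C_E^1),
\]
which immediately gives compactness (as a closed subgroup of $\Hom(X,C_E^1)$). The nontrivial inclusion is shown by contrapositive: if $|x(t)|_E\neq 1$ for some $x\in X$, set $\tilde x=\sum_{\gamma}\gamma(x)\in X^{\Gamma_{E/F}}$; then $\tilde x(t)=N_{E/F}(x(t))$ and the norm compatibility $|N_{E/F}(a)|_F=|a|_E$ forces $|\tilde x(t)|_F\neq 1$, so $t\notin T_{C_F}^1$. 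This is precisely your ``averaging over $\Gamma_{E/F}$'' step, just applied at the level of characters rather than after passing to $\Hom_{\Gamma_{E/F}}(X,\bbr)$. Your route works, but it obscures two points you should make explicit: (i) the $\Gamma_{E/F}$-action on $V_E$ (via $|\cdot|_E$) is trivial, so the composite $x\mapsto |t(x)|_E$ really factors through $X_{\Gamma_{E/F}}$; and (ii) vanishing of $|\chi|_F$ on $X^{\Gamma_{E/F}}$ is equivalent to vanishing of $|\cdot|_E$ there, since $|a|_E=|a|_F^{[E:F]}$ for $a\in C_F$. The paper's argument absorbs both points into the single norm identity and avoids any appeal to the (generally non-$\Gamma_{E/F}$-equivariant) splitting $C_E\approx V_E\times C_E^1$.
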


\begin{proof}
    We claim that $T_{C_F}^1 = \Hom_{\Gamma_\ef}(X,C^1_E)$. Indeed, note that by \eqref{eq:TCF-one}, we have $\Hom_{\Gamma_\ef}(X,C^1_E)\subseteq T_{C_F}^1$.
    Conversely, let $t\notin \Hom_{\Gamma_\ef}(X,C^1_E)$, that is, $t\in T_{C_F}$ and 
    there exists $x\in X$ with $|x(t)|_E>1$. We show that $t\not\in T^1_{C_F}$. Indeed, for  $x\in X$ as above, let 
    \[
    \tilde{x} = \sum_{\gamma \in \Gamma_{\ef}} \gamma(x)\in X^{\Gamma_{\ef}}
    \]
    and note that $\tilde x(t) =\prod_{\gamma \in \Gamma_{\ef}}\gamma(x(t))=N_{E/F}(x(t))\in C_F,$
    where $N_{E/F}$ is the norm map. It follows that $|\tilde x(t)|_F = |N_{E/F}(x(t))|_F=|x(t)|_E>1,$ from which we conclude that $t\not\in T_{C_F}^1$. As $C_E^1$ is compact, then $T_{C_F}^1$ is compact. Now, $T^1_{C_F}$ contains every compact subgroup of $T_{C_F}$, since $\bbr_+$ does not admit any non-trivial compact subgroups. This finishes the proof.
\end{proof}

\begin{cor}\label{cor:compactness}
    If $F$ is a global field, 
    then $T(\bba_F)^1/T(F)$ is compact.
    If $F$ is a local field, then $T(F)^1$ is the maximal compact subgroup of $T(F)$. 
\end{cor}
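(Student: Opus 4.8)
The plan is to deduce Corollary \ref{cor:compactness} directly from Proposition \ref{prop:MaxCompact} together with the relationship between $T_{C_F}$ and the arithmetic quotients recalled just before Proposition \ref{prop:MaxCompact}. First I would treat the global case. We established $T_{C_F}^1 = \Hom_{\Gamma_\ef}(X,C_E^1)$ is the maximal compact subgroup of $T_{C_F}$, and we know $T(\bba_F)/T(F)$ embeds as a closed subgroup of finite index in $T_{C_F}$. I would argue that under this embedding $T(\bba_F)^1/T(F)$ is exactly the preimage of $T_{C_F}^1$: indeed, the absolute value homomorphisms $|\chi|_F$ for $\chi\in X^{\Gamma_\ef}$ are defined compatibly on $T(\bba_F)$ and on $T_{C_F}$ (both come from composing the rational character $\chi$ with $|\cdot|_F\colon C_F\to\bbr_+$), so $T(\bba_F)^1 = T(\bba_F)\cap T_{C_F}^1$, and passing to the quotient by $T(F)$ (which lies in every $\ker|\chi|_F$ by Artin's product formula, hence in $T_{C_F}^1$) gives $T(\bba_F)^1/T(F) = \big(T(\bba_F)/T(F)\big)\cap T_{C_F}^1$. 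Since $T(\bba_F)/T(F)$ is closed in $T_{C_F}$ and $T_{C_F}^1$ is compact, this intersection is compact, which is the first assertion.

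For the local case, the relationship recalled before Proposition \ref{prop:MaxCompact} gives the identification $T_{C_F}=T(F)$, and under this identification the subgroup $T_{C_F}^1$ of \eqref{eq:TCF-one} is literally the group $T(F)^1=\cap_{\chi\in X^\ga}\ker(|\chi|_F)$ of Definition \ref{def:LocUnrChar}, because the two are defined by the identical intersection of kernels of the characters $|\chi|_F$. Hence Proposition \ref{prop:MaxCompact} says precisely that $T(F)^1$ is the maximal compact subgroup of $T(F)$, which is the second assertion. I would state this as a one-line consequence.

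The main (and only) subtlety I anticipate is verifying in the global case that the absolute value characters computed inside $T(\bba_F)$ agree with those computed inside the larger group $T_{C_F}$, and that $T(F)$ is contained in $T_{C_F}^1$; both are standard consequences of Artin's product formula and of the fact that the inclusion $T(\bba_F)/T(F)\hookrightarrow T_{C_F}$ is compatible with the rational characters $\chi\in X^{\Gamma_\ef}$, but they should be spelled out. Once that compatibility is in place, the compactness follows immediately from Proposition \ref{p:CFsplitting}(b) (via Proposition \ref{prop:MaxCompact}) and the fact that a closed subspace of a space whose relevant subgroup is compact inherits compactness of the corresponding piece. No new ideas beyond Proposition \ref{prop:MaxCompact} are needed; the corollary is essentially a translation of that proposition into the classical language of $T(\bba_F)^1/T(F)$ and $T(F)^1$.
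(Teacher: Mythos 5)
Your proposal is correct and follows essentially the same route as the paper: in both cases the key step is the identification $T(\bba_F)^1/T(F) = \bigl(T(\bba_F)/T(F)\bigr)\cap T_{C_F}^1$, after which compactness follows from the compactness of $T_{C_F}^1$ (Proposition \ref{prop:MaxCompact}) together with the closedness of $T(\bba_F)/T(F)$ in $T_{C_F}$; the local case is handled identically via $T_{C_F}=T(F)$. Your extra verifications of the compatibility of the $|\chi|_F$ on the two groups are implicit in the paper's terse proof but are sound and worth spelling out.
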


\begin{proof}
    In the global case we have $T(\bba_F)^1/T(F) = T^1_{C_F}\cap (T(\bba_F)/T(F))$, and since $T^1_{C_F}$ is compact and $T(\bba_F)/T(F) \subset T_{C_F}$ is closed, then  $T^1(A_F)/T(F)$ is compact. If $F$ local, $T_{C_F} = T(F)$ so the claim follows from Proposition \ref{prop:MaxCompact}.
\end{proof}

We are interested in describing the quotient  \(T_{C_F}/T_{C_F}^1\) and the space \(\Hom_c(T_{C_F}/T_{C_F}^1,\bbc^*)\). Consider the continuous restriction map $R:\Hom_c(T_{C_F},\bbc^*)\to \Hom_c(T_{C_F}^1,\bbc^*)$. Its kernel consists of those characters that are trivial on $T_{C_F}^1$, hence is naturally identified with $\Hom_c(T_{C_F}/T_{C_F}^1,\bbc^*)=\widetilde X_{T}$.
To further analyse this structure, we now introduce a logarithmic map, similarly to what was done in \cite[Section I.1.4]{MW}.
Given the lattice of rational characters $X^\ga$ of $T$, consider the real vector spaces $\fa_0^* = \bbr \otimes X^\ga$ and $\fa_0=\Hom(X^\ga,\bbr)$. Define a homomorphism of abelian groups $\logtc:T_{C_F}\to \fa_0$ characterized by
\begin{equation}\label{eq:LogMap}
\lpi \chi,\logtc(t) \rpi = \log_q(|\chi|_F(t)),
\end{equation}
for $\chi\in X^\ga$ and $t\in T_{C_F}$. Here, $q = e$ if $V_F\cong\bbr$ while, if $V_F\cong\mathbb{Z}$, then $q=q_F$ is the base of the valuation group $V_F$.
\begin{prop}\label{prop:tildeXT}
Let $F$ be a local or global field.
Let $\widetilde{L}_T:=\logtc(T_{C_F})$. Then $\widetilde{L}_T\approx T_{C_F}/T_{C_F}^1$.  
In the \(\bbr\)-cases we have $\widetilde{L}_T=\fa_0$ while in the \(\bbz\)-cases $\widetilde{L}_T \subset \fa_0$ is a sublattice of $\Hom(X^\ga,\bbz)$ of finite index.
Furthermore, there is a continuous and open surjection
$\fe:\Lie(\dT)^\ga\to \widetilde{X}_T$ which is an isomorphism in the \(\bbr\)-cases and its kernel is $\Hom(\widetilde{L}_T,\bbz)$ in the \(\bbz\)-cases. In particular, in all cases, $\widetilde{X}_T$ is a connected abelian complex linear algebraic group whose canonical complex structure is compatible with that of $\Lie(\dT)^\ga$. 
\end{prop}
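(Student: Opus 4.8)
The plan is to analyze Langlands's logarithm map $\logtc$ of \eqref{eq:LogMap} directly and then dualize. First I would compute the kernel of $\logtc\colon T_{C_F}\to\fa_0$: by \eqref{eq:LogMap}, $\logtc(t)=0$ exactly when $|\chi|_F(t)=1$ for every $\chi\in X^{\ga}$, i.e.\ when $t\in T_{C_F}^1$ by \eqref{eq:TCF-one}. So $\logtc$ factors through a continuous injective homomorphism $\overline{\logtc}\colon T_{C_F}/T_{C_F}^1\to\fa_0$ with image $\widetilde{L}_T$. Once we know that $\widetilde{L}_T$ is either $\fa_0$ or a lattice (next paragraph) --- hence locally compact in the subspace topology --- the fact that $T_{C_F}$ is locally compact and $\sigma$-compact while $T_{C_F}^1$ is compact (Proposition~\ref{prop:MaxCompact}) makes $T_{C_F}/T_{C_F}^1$ locally compact and $\sigma$-compact, so by the open mapping theorem \cite[(5.29)]{HR} the map $\overline{\logtc}$ is open onto its image; thus $\widetilde{L}_T\approx T_{C_F}/T_{C_F}^1$.

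To identify $\widetilde{L}_T$ inside $\fa_0=\Hom(X^{\ga},\bbr)$, I argue by two inclusions. For $\subseteq$: for $t\in T_{C_F}$ and $\chi\in X^{\ga}$ we have $\chi(t)\in C_F$, so $\langle\chi,\logtc(t)\rangle=\log_q|\chi|_F(t)\in\log_q(V_F)$, which is $\bbr$ if $V_F\cong\bbr$ and $\bbz$ if $V_F\cong\bbz$; thus $\widetilde{L}_T\subseteq\fa_0$ always, and $\widetilde{L}_T\subseteq\Hom(X^{\ga},\bbz)$ when $V_F\cong\bbz$. For the reverse inclusion I use cocharacters: each $\mu\in\dX^{\ga}$ is an $F$-rational cocharacter $\bbg_m\to T$, so induces a continuous homomorphism $\mu_*\colon C_F\to T_{C_F}$ with $\chi(\mu_*(\alpha))=\alpha^{\langle\chi,\mu\rangle}$ for $\chi\in X^{\ga}$, whence $\logtc(\mu_*(\alpha))=\log_q|\alpha|_F\cdot(\mu|_{X^{\ga}})$. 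Since $\log_q\circ|\cdot|_F$ maps $C_F$ onto $\log_q(V_F)$ and $\widetilde{L}_T$ is a subgroup, $\widetilde{L}_T$ contains the subgroup generated by all $r\lambda$ with $r\in\log_q(V_F)$ and $\lambda$ in $L':=\{\mu|_{X^{\ga}}\mid\mu\in\dX^{\ga}\}$. Here $L'$ is the image of the restriction map $\dX^{\ga}=\Hom(X_{\ga},\bbz)\to\Hom(X^{\ga},\bbz)$ (where $X_\ga$ are the $\ga$-coinvariants), which is the $\bbz$-dual of the canonical map $X^{\ga}\to X_{\ga}$; that map is injective (if $x\in X^{\ga}$ maps to $0$ in $X_{\ga}$, then $x$ is a $\bbz$-combination of elements $\gamma y-y$, so $\sum_{\gamma\in\ga}\gamma x=0$, while $\sum_{\gamma\in\ga}\gamma x=|\ga|\, x$ since $x$ is $\ga$-fixed, forcing $x=0$) between lattices of the same rank, so it has finite cokernel, and therefore so does its dual: $L'$ has finite index in $\Hom(X^{\ga},\bbz)$. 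Consequently, in the $\bbr$-case $\widetilde{L}_T$ is a subgroup of $\fa_0$ containing $\bbr\lambda$ for a spanning family of $\lambda$, so $\widetilde{L}_T=\fa_0$; in the $\bbz$-case $L'\subseteq\widetilde{L}_T\subseteq\Hom(X^{\ga},\bbz)$, so $\widetilde{L}_T$ is a finite-index sublattice of $\Hom(X^{\ga},\bbz)$.

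Next I construct $\fe$. From $0\to X\to\Lie(\dT)\to\dT\to0$ we have $\Lie(\dT)=\bbc\otimes X$, so $\Lie(\dT)^{\ga}=\bbc\otimes X^{\ga}$, which is canonically the complexified $\bbr$-dual of $\fa_0=\Hom(X^{\ga},\bbr)$, and (in the $\bbz$-case) also $\cong\Hom(\widetilde{L}_T,\bbz)\otimes\bbc=\Hom(\widetilde{L}_T,\bbc)$ since $\widetilde{L}_T$ has finite index in $\Hom(X^{\ga},\bbz)=(X^{\ga})^\vee$. In particular each $\xi\in\Lie(\dT)^{\ga}$ pairs $\bbc$-linearly with $\fa_0\supseteq\widetilde{L}_T$; define $\fe(\xi)\in\widetilde{X}_T$ by $\fe(\xi)(t)=\exp(2\pi i\langle\xi,\logtc(t)\rangle)$, a continuous character of $T_{C_F}$ trivial on $T_{C_F}^1$, which under $\widetilde{X}_T\approx\Hom_c(\widetilde{L}_T,\bbc^*)$ becomes $\lambda\mapsto\exp(2\pi i\langle\xi,\lambda\rangle)$. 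Via $\exp\colon\bbc\to\bbc^*$ this exhibits $\fe$ as the map induced on $\Hom(\widetilde{L}_T,-)$ by $0\to\bbz\to\bbc\to\bbc^*\to0$: it is surjective --- in the $\bbz$-case because $\Ext^1(\widetilde{L}_T,\bbz)=0$, in the $\bbr$-case because every continuous character of the simply connected group $\fa_0$ lifts to a (necessarily $\bbr$-linear) homomorphism $\fa_0\to\bbc$ --- and open, since the exponential covering $\bbc\to\bbc^*$ is a local homeomorphism, with kernel $\{\xi\mid\langle\xi,\widetilde{L}_T\rangle\subseteq\bbz\}$. In the $\bbr$-case this kernel is $0$ (a real subspace of $\bbc$ contained in $\bbz$ is trivial), so $\fe$ is an isomorphism; in the $\bbz$-case it is the dual lattice, $=\Hom(\widetilde{L}_T,\bbz)$. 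Finally, $\widetilde{X}_T$, being the continuous open image of the connected abelian group $\Lie(\dT)^{\ga}$, is connected and abelian; it is $\Lie(\dT)^{\ga}$ itself (a complex vector space) when $V_F\cong\bbr$ and $\Lie(\dT)^{\ga}/\Hom(\widetilde{L}_T,\bbz)\cong\bbg_m^{\rk X^{\ga}}$ (a complex algebraic torus) when $V_F\cong\bbz$; in both cases a connected abelian complex linear algebraic group, and transporting the complex structure of $\Lie(\dT)^{\ga}$ through the local biholomorphism $\fe$ equips $\widetilde{X}_T$ with a complex-analytic group structure whose Lie algebra is canonically $\Lie(\dT)^{\ga}$.

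I expect the main obstacle to be the exact determination of $\widetilde{L}_T$ in the non-Archimedean cases --- producing enough elements of $T_{C_F}$ with prescribed logarithms, done above via $F$-rational cocharacters --- together with the bookkeeping, through the finite-index inclusions $X^{\ga}\hookrightarrow X_{\ga}$ and $L'\subseteq\widetilde{L}_T\subseteq\Hom(X^{\ga},\bbz)$, needed to identify $\Hom(\widetilde{L}_T,\bbc)$ canonically with $\Lie(\dT)^{\ga}$ uniformly over all four field types. The topological refinements (the open mapping theorem and the openness of the exponential covering) are routine by comparison.
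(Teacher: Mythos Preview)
Your proof is correct and follows essentially the same route as the paper's: compute $\ker(\logtc)=T_{C_F}^1$, identify the image $\widetilde{L}_T$, and then dualize via $\Hom(\widetilde{L}_T,-)$ applied to the exponential sequence. There are two minor differences worth noting. First, for the topological isomorphism $T_{C_F}/T_{C_F}^1\approx\widetilde{L}_T$ the paper argues that $\logtc$ is proper (using that its kernel is the maximal compact subgroup, Proposition~\ref{prop:MaxCompact}), whereas you invoke the open mapping theorem after first establishing that $\widetilde{L}_T$ is locally compact; both work, though your argument has a harmless forward reference. Second, and more substantively, the paper outsources the characterization of $\widetilde{L}_T$ (vector space in the $\bbr$-case, finite-index sublattice of $\Hom(X^{\ga},\bbz)$ in the $\bbz$-case) to \cite[pp.~6,7]{MW}, while you supply a self-contained argument via the sandwich $L'\subseteq\widetilde{L}_T\subseteq\Hom(X^{\ga},\bbz)$ using $F$-rational cocharacters and the finite-index inclusion $X^{\ga}\hookrightarrow X_{\ga}$. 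This is a genuine addition: it makes explicit precisely the step the paper leaves to the reference, and the cocharacter construction $\mu_*\colon C_F\to T_{C_F}$ you give is clean and correct in both the local and global settings.
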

\begin{proof}
In both the $\bbr$- and $\bbz$-cases, we have $\ker(\logtc)=T_{C_F}^1$. By Proposition \ref{prop:MaxCompact} the map $\logtc$ is proper, and hence
defines a topological isomorphism from $T_{C_F}/T_{C_F}^1$ onto $\widetilde{L}_T$. 
Thus, we have a topological identification $\widetilde{X}_T \approx \Hom_c(\widetilde{L}_T,\bbc^*)$. Observe that from this identification, then $\widetilde{X}_T$ carries a canonical complex structure as elucidated in \cite[Sections 2.4 and 4.4]{Ta1} in the $GL_1$-case (see also \cite[Section I.1.4]{MW}).

The characterization of \(\widetilde{L}_T\approx T_{C_F}/T_{C_F}^1\) as a vector space (in the $\bbr$-cases), or a lattice (in the $\bbz$-cases), follows from \eqref{eq:LogMap}, similarly as was done in \cite[pp. 6,7]{MW}.

If $V_F\cong\bbr$, using $\widetilde{L}_T = \fa_0\cong \bbr\otimes \Hom(X^\ga,\bbz)$ and the tensor-hom adjunction we obtain
\begin{equation}\label{eq:Rcancomplexstruc}
\widetilde{X}_T \cong \Hom_c(\widetilde{L}_T,\bbc^*)
\cong\Hom_c(\bbr, \Hom(\Hom(X^\ga,\bbz),\bbc^*))
\cong \Lie(\dT)^\ga,
\end{equation}
where we used that any continuous homomorphism between finite-dimensional Lie groups is necessarily smooth  (see \cite[Theorem 3.39]{Wa}). Furthermore, denoting by $\fa^*$ the complexification of $\fa_0^*$, note that $\Lie(\dT)^\ga \cong \fa^*$.

In the \(\bbz\)-cases, we have that $\widetilde{L}_T \subseteq \Hom(X^\ga,\bbz)$ 
is a sublattice (by the same argument as for the analogous 
case of $T(\bba_F)$, see \cite[pp. 6,7]{MW}). In particular 
this inclusion is of finite index, and we have $\Hom(\widetilde{L}_T,\bbc) \cong \bbc\otimes \Hom(\widetilde{L}_T,\bbz) \cong \Lie(\dT)^\ga$.
As $\widetilde{L}_T$ is free (and discrete), applying $\Hom(\widetilde{L}_T,-)$ to the exponential exact sequence yields 
\begin{equation}\label{eq:Zcancomplexstruc}
    0 \to \Hom(\widetilde{L}_T,\bbz) \to \Lie(\dT)^\ga \to \widetilde{X}_T \to 0.
\end{equation}
Both in \eqref{eq:Rcancomplexstruc} and \eqref{eq:Zcancomplexstruc}, we see that the canonical complex structure on $\widetilde{X}_T$ is compatible with the complex structure on $\Lie(\dT)^\ga$, finishing the proof. 
\end{proof}

When \(F\) is global, by using the closed inclusion \(T(\bba_F)/T(F) \subset T_{C_F}\), it is clear that the restriction of \(\logtc\) to \(T(\bba_F)/T(F)\) coincides with the map \(\log_T:T(\bba_F)/T(F)\to\mathfrak{a}_0\) defined in \cite[p. 7]{MW} (their map is defined from \(T(\bba_F)\to\mathfrak{a}_0\), but it clearly factors through \(T(\bba_F)/T(F)\to\mathfrak{a}_0\)), whose kernel is \(T(\bba_F)^1/T(F)\). We conclude that there is a topological identification of \(T(\bba)/T(\bba_F)^1\) with \(L_T\), the image of \(\log_T\). When \(F\) is local, let us write \(L_T := \widetilde{L}_T\).  We have the following characterization of the space \(X_T\) of unramified characters.

\begin{cor}\label{cor:GlobXT} 
    Let \(F\) be a local or global field. There is a continuous and open surjection
$\fe:\Lie(\dT)^\ga\to X_T$ whose  kernel is the lattice $\widehat{L}_T:=\Hom(L_T,\bbz)\subset \fa_0^*$ in the \(\bbz\)-cases, and which is an isomorphism in the \(\bbr\)-cases. In particular, $X_T$ is always a connected abelian complex linear algebraic group whose canonical complex structure is compatible with that of $\Lie(\dT)^\ga$. 
\end{cor}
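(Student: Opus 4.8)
The plan is to deduce Corollary \ref{cor:GlobXT} from Proposition \ref{prop:tildeXT} by transporting the structure along the relationship between $\widetilde X_T$ and $X_T$. First I would recall the setup: when $F$ is local, $T_{C_F} = T(F)$ and $X_T = \widetilde X_T$, so there is nothing to prove beyond Proposition \ref{prop:tildeXT} (with $L_T := \widetilde L_T$). So the substance is the global case, where the closed, finite-index inclusion $T(\bba_F)/T(F) \subset T_{C_F}$ (from \cite[p.~245]{La}) is the key input. Restriction of characters gives a continuous homomorphism $\widetilde X_T = \Hom_c(T_{C_F}/T_{C_F}^1,\bbc^*) \to X_T = \Hom_c\big((T(\bba_F)/T(F))/T(\bba_F)^1,\bbc^*\big)$; I would first identify its target correctly, using the observation made just before the corollary that $\logtc$ restricted to $T(\bba_F)/T(F)$ is the map $\log_T$ of \cite[p.~7]{MW}, whose kernel is $T(\bba_F)^1/T(F)$, so that the image $L_T$ of $\log_T$ is topologically identified with $(T(\bba_F)/T(F))/T(\bba_F)^1$, and hence $X_T \approx \Hom_c(L_T,\bbc^*)$.

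Next I would show the restriction map $\widetilde X_T \to X_T$ is surjective with finite kernel, and — more importantly — that it is open, so that $X_T$ inherits the Lie group / complex structure. Concretely, I would compose with the logarithmic identifications: on $L$-side objects the inclusion $T(\bba_F)/T(F) \hookrightarrow T_{C_F}$ induces an inclusion of the images $L_T \subset \widetilde L_T$, and since the ambient inclusion has finite index, $L_T$ is a finite-index subgroup (in the $\bbz$-case a finite-index sublattice of $\widetilde L_T$, in the $\bbr$-case $L_T = \widetilde L_T = \fa_0$). In the $\bbr$-case there is nothing further: $X_T \cong \Hom_c(\fa_0,\bbc^*) \cong \Lie(\dT)^\ga$ exactly as in \eqref{eq:Rcancomplexstruc}, and $\fe$ is the same isomorphism. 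In the $\bbz$-case, since $L_T$ is a finite-index sublattice of the free discrete group $\widetilde L_T$, applying $\Hom(L_T,-)$ to the exponential sequence $0\to\bbz\to\bbc\to\bbc^*\to 0$ gives $0 \to \Hom(L_T,\bbz) \to \Hom(L_T,\bbc) \to \Hom_c(L_T,\bbc^*) \to 0$; and $\Hom(L_T,\bbc) \cong \bbc \otimes \Hom(L_T,\bbz) \cong \bbc \otimes \Hom(\widetilde L_T,\bbz) \cong \Lie(\dT)^\ga$ because the finite-index inclusion $L_T \hookrightarrow \widetilde L_T$ becomes an isomorphism after tensoring with $\bbc$. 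This produces the desired continuous open surjection $\fe : \Lie(\dT)^\ga \to X_T$ with kernel $\widehat L_T = \Hom(L_T,\bbz) \subset \fa_0^*$, and shows its complex structure is compatible with that of $\Lie(\dT)^\ga$, exactly parallel to \eqref{eq:Zcancomplexstruc}.

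Finally I would assemble the conclusion: $\fe$ being a continuous open surjection with discrete (indeed, finitely generated free abelian) kernel exhibits $X_T$ as a quotient of the complex vector space $\Lie(\dT)^\ga$ by a discrete subgroup, hence a connected abelian complex Lie group; in the $\bbz$-case the kernel $\widehat L_T$ is a full lattice in a complement, so the quotient is a complex algebraic torus, and in the $\bbr$-case $X_T$ is the vector space $\Lie(\dT)^\ga$ itself — in both cases it is a connected abelian complex linear algebraic group whose complex structure is compatible with $\Lie(\dT)^\ga$. The main obstacle I anticipate is not any deep argument but bookkeeping with the three ``layers'' of groups ($T_{C_F}$, $T(\bba_F)/T(F)$, and their $\log$-images) and making sure the finite-index inclusion $T(\bba_F)/T(F) \subset T_{C_F}$ really does translate into a finite-index inclusion $L_T \subset \widetilde L_T$ of lattices — one must check that the maximal compact subgroups match up, i.e. $T(\bba_F)^1/T(F) = T_{C_F}^1 \cap (T(\bba_F)/T(F))$, which is Corollary \ref{cor:compactness}, so that the quotient by compacts genuinely inherits finite index. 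Openness of the restriction map then follows because a continuous surjective homomorphism between locally compact $\sigma$-compact groups with one a finite-index subgroup situation is automatically open, or more simply because on the $\log$-side the map $\Hom_c(\widetilde L_T,\bbc^*) \to \Hom_c(L_T,\bbc^*)$ dual to a finite-index inclusion of lattices is manifestly an open surjection.
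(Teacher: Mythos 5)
Your proof is correct and takes essentially the same route as the paper: the paper's own proof of Corollary \ref{cor:GlobXT} is the single sentence ``Follows from Proposition \ref{prop:tildeXT},'' and the argument implicit in that sentence---identify $X_T\approx\Hom_c(L_T,\bbc^*)$ via the paragraph preceding the corollary, observe that the finite-index closed inclusion $T(\bba_F)/T(F)\subset T_{C_F}$ together with Corollary \ref{cor:compactness} gives the finite-index sublattice $L_T\subset\widetilde L_T$, and then rerun the exponential-sequence computation \eqref{eq:Rcancomplexstruc}/\eqref{eq:Zcancomplexstruc} with $L_T$ in place of $\widetilde L_T$---is exactly what you have spelled out.
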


\begin{proof}
    Follows from Proposition \ref{prop:tildeXT}.
\end{proof}

\begin{prop}\label{prop:charTCFconcomp}
    Let $F$ be a local or global field. 
    Then $\widetilde X_{T}\approx \Hom_c(T_{C_F}/T_{C_F}^1,\bbc^*)=\Hom_c(T_{C_F},\bbc^*)^\circ$. Furthermore, we have that $\widetilde X_{T}$ is an open subgroup of $\Hom_c(T_{C_F},\bbc^*)$, and $\Hom_c(T_{C_F},\bbc^*)/\widetilde X_{T}\approx 
    \Hom_c(T^1_{C_F},\bbc^*)$. 
\end{prop}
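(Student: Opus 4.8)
The plan is to deduce everything from the exact sequence of topological groups
\begin{equation*}
1 \to T_{C_F}^1 \to T_{C_F} \to T_{C_F}/T_{C_F}^1 \to 1
\end{equation*}
together with Proposition \ref{prop:tildeXT} and the structure of $T_{C_F}/T_{C_F}^1 \approx \widetilde{L}_T$. First I would recall the continuous restriction map $R:\Hom_c(T_{C_F},\bbc^*)\to \Hom_c(T_{C_F}^1,\bbc^*)$ already introduced before Proposition \ref{prop:tildeXT}, whose kernel is precisely $\widetilde X_T = \Hom_c(T_{C_F}/T_{C_F}^1,\bbc^*)$. The first point is surjectivity of $R$: since $T_{C_F}^1$ is a compact (hence closed) subgroup of the locally compact abelian group $T_{C_F}$ and $\bbc^*$ is divisible, every continuous character of $T_{C_F}^1$ extends to $T_{C_F}$ (Pontryagin-style extension; one may split off the compact-open decomposition $\bbc^*\cong S^1\times\bbr_+$ via Proposition \ref{p:CompOpen}(e) and extend the $S^1$-part by Pontryagin duality of LCA groups and the $\bbr_+$-part using that $\widetilde{L}_T$ is free). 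This gives the algebraic short exact sequence $1\to \widetilde X_T \to \Hom_c(T_{C_F},\bbc^*)\to \Hom_c(T_{C_F}^1,\bbc^*)\to 1$.

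Next I would identify the identity component. Since $T_{C_F}^1$ is compact, $\Hom_c(T_{C_F}^1,\bbc^*)$ is discrete: indeed a continuous character of a compact group lands in a neighborhood of $1$ in $\bbc^*$ only if it is trivial (a compact subgroup of $\bbc^*$ is contained in $S^1$, and a character near the trivial one on a compact set is trivial), so $\{c_e\}$ is open, and the compact-open topology on $\Hom_c(T_{C_F}^1,\bbc^*)$ coincides with that of the Pontryagin dual of a compact group, which is discrete. Therefore $\ker R = \widetilde X_T$ is open in $\Hom_c(T_{C_F},\bbc^*)$ and the quotient $\Hom_c(T_{C_F},\bbc^*)/\widetilde X_T$ is discrete and isomorphic (as a topological group) to $\Hom_c(T^1_{C_F},\bbc^*)$ — this is the last displayed assertion. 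Openness of $\widetilde X_T$ together with the fact, from Proposition \ref{prop:tildeXT}, that $\widetilde X_T$ is connected (it is a continuous open image of $\Lie(\dT)^\ga$, which is connected) forces $\widetilde X_T = \Hom_c(T_{C_F},\bbc^*)^\circ$: an open connected subgroup of a topological group is always the identity component.

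The main obstacle I anticipate is the topological identification $\Hom_c(T_{C_F},\bbc^*)/\widetilde X_T \approx \Hom_c(T^1_{C_F},\bbc^*)$ as \emph{topological} groups, not merely the algebraic isomorphism: one must check that $R$ is not only continuous and surjective with kernel $\widetilde X_T$, but also \emph{open}, so that it induces a homeomorphism on the quotient. For this I would argue that $R$ admits a continuous section locally, or more directly, that since $\Hom_c(T^1_{C_F},\bbc^*)$ is discrete, any surjective group homomorphism onto it from a topological group is automatically open; then the induced map on the quotient is a continuous open bijection, hence a homeomorphism. This handles all the subtlety. The remaining verifications — that the extension in the first paragraph can be carried out continuously enough, and that $\widetilde X_T$ inherits the subspace topology compatibly with the description in Proposition \ref{prop:tildeXT} — are routine given Proposition \ref{p:CompOpen}(d),(e) and the properness of $\logtc$ established in the proof of Proposition \ref{prop:tildeXT}.
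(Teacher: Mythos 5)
Your proof is correct and reaches the same conclusions, but it takes a genuinely different route in two of the three main steps. For the \emph{connectedness} of $\widetilde X_T$, you observe that it is the continuous image of the connected space $\Lie(\dT)^{\Gamma_\ef}$ under the map $\fe$ of Proposition \ref{prop:tildeXT}; the paper instead invokes Mackey's criterion (a locally compact abelian group has connected dual iff it is torsion-free) applied to $T_{C_F}/T_{C_F}^1$, which is shown to be torsion-free using the same Proposition \ref{prop:tildeXT}. Both arguments lean on the same prior result, but yours is arguably more direct. For the \emph{surjectivity} of the restriction map $R$, you appeal to the fact that a character of the compact group $T^1_{C_F}$ necessarily lands in $S^1$ and then use the injectivity of $S^1$ (Pontryagin extension for closed subgroups of LCA groups); the paper instead splits into the cases $V_F\cong\bbz$ (where $T^1_{C_F}$ is open, so algebraic extensions using divisibility of $\bbc^*$ are automatically continuous) and $V_F\cong\bbr$ (where a Galois-equivariant direct product decomposition $T_{C_F}\approx T^1_{C_F}\times T_\bbr$ from Remark \ref{rem:GalSplitting} is used). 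Your argument is more uniform and avoids the equivariant splitting; the paper's is more elementary and self-contained. The \emph{discreteness} of $\Hom_c(T^1_{C_F},\bbc^*)$ and the conclusion about the component group are handled the same way in both. One small caveat: your parenthetical about ``extending the $\bbr_+$-part using that $\widetilde{L}_T$ is free'' is spurious — since $T^1_{C_F}$ is compact, the $\bbr_+$-component of any of its characters is automatically trivial, so there is nothing to extend there; this does not affect the validity of the argument, but the remark as written is misleading.
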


\begin{proof}
    As $T^1_{C_F}$ is compact, we have $\Hom_c(T^1_{C_F},\bbc^*)=\Hom_c(T^1_{C_F},\bbt)$ is discrete, as it is well known that the Pontryagin dual of a compact topological group is discrete \cite[(23.17) Theorem]{HR}. Hence, the kernel of $R$, $\widetilde X_T$, is a subgroup which is open and closed, and then must be a connected component, provided it is connected. 
    From \cite[Theorem, p.158]{Ma}, it suffices to show that $T_{C_F}/T_{C_F}^1$ is torsion-free, which in turn 
    follows immediately from Proposition \ref{prop:tildeXT}. 
    
    Finally, to prove the statement about the component group, we need to show that the restriction homomorphism $R:\Hom_c(T_{C_F},\bbc^*)\to \Hom_c(T_{C_F}^1,\bbc^*)$ is surjective. If $V_F\cong\bbz$ then $T^1_{C_F}\subset T_{C_F}$ is open, hence any algebraic extension of a continuous complex character of $T^1_{C_F}$ to $T_{C_F}$  (which exists since $\bbc^*$ is divisible) is continuous. For $V_F\cong\bbr$, we have a splitting $C_E=C_E^1\times V_E$ as $\Gamma_\ef$-modules (see Remark \ref{rem:GalSplitting}), hence a direct product decomposition $T_{C_F}=T^1_{C_F}\times T_\bbr$, where $T_\bbr$ is a real split torus. Again, we see that $R$ is surjective.
\end{proof}

\begin{prop}\label{prop:OpClosed}
    Let $F$ be a global field. Then, the restriction map
    \[
    \rho:\Hom_c(T_{C_F},\bbc^*) \to \Hom_c(T(\bba_F)/T(F),\bbc^*)
    \]
    is continuous, open, and closed if both spaces are endowed with the compact-open topology. Moreover, 
    $\rho$ is surjective with finite kernel.
\end{prop}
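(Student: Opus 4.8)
The plan is to deduce everything from the closed inclusion $\iota: T(\bba_F)/T(F) \hookrightarrow T_{C_F}$, which by the discussion after \eqref{eq:TCF} has finite index, together with the formalism of Proposition~\ref{p:CompOpen}. First I would observe that $\rho$ is precisely the pull-back map $\iota^*$ along this continuous homomorphism of locally compact Hausdorff groups, so continuity is automatic: pulling back along a fixed continuous map is continuous for the compact-open topology (the preimage of a sub-basic open set $V(K,W)$ is $V(\iota(K),W)$, and $\iota(K)$ is compact). For surjectivity, since $\bbc^*$ is divisible hence an injective $\bbz$-module, every continuous character of $T(\bba_F)/T(F)$ extends as an \emph{abstract} character of $T_{C_F}$; because $T(\bba_F)/T(F)$ is open in $T_{C_F}$ (finite index, closed, in a topological group) such an algebraic extension is automatically continuous, so $\rho$ is onto. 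The kernel of $\rho$ consists of continuous characters of $T_{C_F}$ trivial on $T(\bba_F)/T(F)$, i.e. of $\Hom_c(T_{C_F}/(T(\bba_F)/T(F)),\bbc^*)$; since $T_{C_F}/(T(\bba_F)/T(F))$ is a finite abelian group, this dual is finite.

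Next I would handle openness and closedness. Since $Q := T_{C_F}/(T(\bba_F)/T(F))$ is finite, the short exact sequence $1 \to Q^\vee \to \Hom_c(T_{C_F},\bbc^*) \xrightarrow{\rho} \Hom_c(T(\bba_F)/T(F),\bbc^*) \to 1$ exhibits $\rho$ as a quotient by a finite (hence discrete) subgroup. A surjective continuous homomorphism of topological groups with discrete kernel is automatically open — indeed it is a covering map onto its image, and here the image is everything. Concretely: for $U$ open in $\Hom_c(T_{C_F},\bbc^*)$, $\rho^{-1}(\rho(U)) = \bigcup_{\chi \in Q^\vee} \chi \cdot U$ is open (a finite union of translates), so $\rho(U)$ is open by definition of the quotient topology; and one checks the compact-open topology on $\Hom_c(T(\bba_F)/T(F),\bbc^*)$ agrees with this quotient topology using that $\iota$ is a homeomorphism onto an open subgroup. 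For closedness, given $Z$ closed, $\rho^{-1}(\rho(Z)) = \bigcup_{\chi\in Q^\vee}\chi\cdot Z$ is again closed (finite union of translates of a closed set), so $\rho(Z)$ is closed; alternatively, $\rho$ is a proper map since its fibres are finite and it is open, and proper maps to locally compact Hausdorff spaces are closed.

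The main technical point — and the step I expect to require the most care — is verifying that the compact-open topology on $\Hom_c(T(\bba_F)/T(F),\bbc^*)$ coincides with the quotient topology induced from $\Hom_c(T_{C_F},\bbc^*)$ via $\rho$; equivalently, that $\rho$ is a topological quotient map, not merely continuous and surjective. Here I would use that $T(\bba_F)/T(F)$ is open in $T_{C_F}$: a compact subset $K' \subset T(\bba_F)/T(F)$ is also compact in $T_{C_F}$, so $V(K',W)$-type neighbourhoods pull back correctly, and conversely a compact $K \subset T_{C_F}$ is covered by finitely many translates $g_i(T(\bba_F)/T(F))$ so that controlling a character on $K$ reduces to controlling it on compacta inside $T(\bba_F)/T(F)$ together with finitely many coset-representative values, the latter being governed by the finite kernel. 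Once this identification is in place, openness and closedness of $\rho$ follow formally from the finite-union-of-translates argument above, and the proposition is complete.
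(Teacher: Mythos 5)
Your proposal follows essentially the same route as the paper: use the closed, finite-index (hence open) inclusion $T(\bba_F)/T(F)\hookrightarrow T_{C_F}$, deduce that $\rho$ is a continuous open surjection, and then get closedness from finiteness of the kernel. The paper discharges the openness step by citing \cite[(24.5) Theorem]{HR} and noting it adapts from $\bbt$-valued to $\bbc^*$-valued characters; you instead sketch a proof of that result by hand.

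One sentence in your argument is false as stated and worth flagging: ``A surjective continuous homomorphism of topological groups with discrete kernel is automatically open --- indeed it is a covering map onto its image.'' This is not true in general (e.g.\ the identity map from $\bbr$ with the discrete topology to $\bbr$ with the usual topology is a continuous surjective homomorphism with trivial kernel but is not open). You effectively recognize this yourself: the real content is the identification of the compact-open topology on $\Hom_c(T(\bba_F)/T(F),\bbc^*)$ with the topology making $\rho$ a quotient map, which you correctly isolate as ``the main technical point'' and sketch via covering a compact $K\subset T_{C_F}$ by finitely many translates of $T(\bba_F)/T(F)$. That sketch is the right idea and is, in substance, the argument underlying Hewitt--Ross (24.5), so the proof goes through; you would just want to remove or reformulate the ``covering map'' claim, since it is not what carries the weight.
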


\begin{proof}
    When $F$ is global, we have that $T(\bba_F)/T(F) \subseteq T_{C_F}$ is a closed inclusion of finite index \cite[p. 245]{La}. Hence, $T(\bba_F)/T(F)$ is also an open subgroup of $T_{C_F}$ and thus $\rho$ is a continuous and open surjection (see \cite[(24.5) Theorem]{HR} for the $\bbt$-valued case; their proof that this homomorphism is also open can be easily adapted to the $\bbc^*$-valued case). As $\ker(\rho)$ is finite, this map is also closed.
\end{proof}

\begin{cor}\label{cor:main0}
If $F$ is a local or global field, then $\mathsf{X}_\bbc(T,F)$ is an abelian Lie group with complex analytic structure, and identity component
$X_T$. Furthermore, the component group $\mathsf{X}_\bbc(T,F)/X_T\approx \Hom_c(T(\bba_F)^1/T(F),\bbc^*)$.

\end{cor}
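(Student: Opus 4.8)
The plan is to deduce Corollary \ref{cor:main0} from the results already assembled in this section, treating the local and global cases separately but in parallel. First I would handle the local case, which is the quicker of the two: when $F$ is local we have the identification $T_{C_F}=T(F)$, so $\mathsf{X}_\bbc(T,F)=\Hom_c(T(F),\bbc^*)=\Hom_c(T_{C_F},\bbc^*)$ on the nose. Proposition \ref{prop:charTCFconcomp} then tells us that the subgroup $\widetilde X_T$ of unramified characters is open and closed, hence a union of connected components, and that it is in fact \emph{the} identity component (it is connected, being a quotient of $\Lie(\dT)^\ga$ by Corollary \ref{cor:GlobXT}). Since $T(F)^1=T_{C_F}^1$ in the local case, $\widetilde X_T=X_T$, so $X_T$ is the identity component. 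The complex Lie group structure is then transported from Corollary \ref{cor:GlobXT}, which exhibits $X_T$ as the image of $\Lie(\dT)^\ga$ under a continuous open surjection, hence (being open in $\mathsf{X}_\bbc(T,F)$) endows $\mathsf{X}_\bbc(T,F)$ with a complex analytic Lie group structure whose identity component is $X_T$; and the component group is $\Hom_c(T(F)^1,\bbc^*)$, which matches the asserted formula since in the local case $T(\bba_F)^1/T(F)$ should be read as $T(F)^1$ under the running conventions.

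For the global case I would use Proposition \ref{prop:OpClosed}: the restriction map $\rho:\Hom_c(T_{C_F},\bbc^*)\to \Hom_c(T(\bba_F)/T(F),\bbc^*)=\mathsf{X}_\bbc(T,F)$ is a continuous, open, closed, surjective homomorphism with finite kernel. A continuous open surjection with discrete (here finite) kernel is a covering homomorphism of topological groups, so it is a local homeomorphism; consequently it transports the complex analytic Lie group structure on $\Hom_c(T_{C_F},\bbc^*)$ — which exists by Proposition \ref{prop:charTCFconcomp} and Corollary \ref{cor:GlobXT}, since that group has open identity component $\widetilde X_T\cong \Lie(\dT)^\ga/\widehat L_T$ — to a complex analytic Lie group structure on $\mathsf{X}_\bbc(T,F)$. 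Because $\rho$ is open, it maps the identity component $\widetilde X_T$ of $\Hom_c(T_{C_F},\bbc^*)$ onto an open connected subgroup of $\mathsf{X}_\bbc(T,F)$, hence onto its identity component; and this image is precisely $X_T$. Indeed, $\rho(\widetilde X_T)$ consists of the characters of $T(\bba_F)/T(F)$ that extend to a character of $T_{C_F}$ trivial on $T_{C_F}^1$, and restricting such a character to $T(\bba_F)$ kills $T(\bba_F)^1=T_{C_F}^1\cap T(\bba_F)$ (Corollary \ref{cor:compactness}), so $\rho(\widetilde X_T)\subseteq X_T$; conversely, since $T(\bba_F)/T(F)\subset T_{C_F}$ has finite index, $\bbc^*$ being divisible any unramified automorphic character extends to $T_{C_F}$, and one checks the extension can be taken trivial on $T_{C_F}^1$ because $T_{C_F}^1/(T_{C_F}^1\cap T(\bba_F))$ is finite and $\bbc^*$ is divisible — giving $X_T\subseteq\rho(\widetilde X_T)$.

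Finally, for the component group in the global case I would compute $\mathsf{X}_\bbc(T,F)/X_T\approx \Hom_c(T(\bba_F)/T(F),\bbc^*)/\rho(\widetilde X_T)$. Using the identification of $X_T$ with $\Hom_c\big((T(\bba_F)/T(F))/(T(\bba_F)^1/T(F)),\bbc^*\big)$ — i.e.\ $X_T=\Hom_c(T(\bba_F)/T(\bba_F)^1,\bbc^*)$ — the component group is the cokernel of the inflation $\Hom_c(T(\bba_F)/T(\bba_F)^1,\bbc^*)\hookrightarrow\Hom_c(T(\bba_F)/T(F),\bbc^*)$, which by the same restriction argument as in Proposition \ref{prop:charTCFconcomp} (the relevant quotient $T(\bba_F)^1/T(F)$ is compact by Corollary \ref{cor:compactness}, so its dual is discrete, and restriction $\mathsf{X}_\bbc(T,F)\to\Hom_c(T(\bba_F)^1/T(F),\bbc^*)$ is surjective because $T(\bba_F)^1/T(F)$ is compact hence $\bbc^*$-characters extend via divisibility, or because it has finite index complications one treats exactly as before) has cokernel $\Hom_c(T(\bba_F)^1/T(F),\bbc^*)$. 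The main obstacle I anticipate is the surjectivity of this last restriction map — extending a continuous character from the (merely closed, not open) compact subgroup $T(\bba_F)^1/T(F)$ to all of $T(\bba_F)/T(F)$; this is where one genuinely needs an argument, either invoking Pontryagin-type duality/injectivity of $\bbt$ together with divisibility of $\bbc^*=\bbt\times\bbr_{>0}$, or splitting off the $\bbr_{>0}$-factor as in Remark \ref{rem:GalSplitting} and Proposition \ref{prop:charTCFconcomp}. Everything else is a formal consequence of the covering-map property of $\rho$ and the already-established structure of $\Hom_c(T_{C_F},\bbc^*)$.
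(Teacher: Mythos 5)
Your proposal is correct and follows the same overall strategy as the paper: reduce to showing that $X_T$ is open (via Propositions \ref{prop:charTCFconcomp} and \ref{prop:OpClosed}) and then transport the complex Lie group structure from Corollary \ref{cor:GlobXT}, with the local case handled immediately from $T_{C_F}=T(F)$. You supply more explicitly the verification that $\rho(\widetilde X_T)=X_T$ and the surjectivity of the restriction to $\Hom_c(T(\bba_F)^1/T(F),\bbc^*)$ (and correctly identify that the latter needs either the splitting of Remark \ref{rem:GalSplitting} in the $\bbr$-case or openness plus divisibility in the $\bbz$-case), steps which the paper dispatches by a terse reference to the proof of Proposition \ref{prop:charTCFconcomp}.
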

\begin{proof}
Obviously, $\mathsf{X}_\bbc(T,F)$ is an abelian topological group. To show that it can be given a compatible structure of a complex analytic Lie group, it is enough to show this for an open subgroup. Hence, it suffices to show that the subgroup $X_T\subset \mathsf{X}_\bbc(T,F)$ is open, since $X_T$ is a connected complex analytic Lie group, by Corollary \ref{cor:GlobXT}. The claim that $X_T$ is the identity component then simply follows, $X_T$ being closed, open and connected. 

    If $F$ local, we have $T_{C_F} = T(F)$, and hence $\widetilde{X}_T = X_T$. Now Proposition \ref{prop:charTCFconcomp} shows that $X_T$ is an open subgroup. 
    If $F$ is global, then Propositions \ref{prop:charTCFconcomp} and \ref{prop:OpClosed} show that $X_T=\rho(\widetilde X_{T})$ is open. 
    
    Finally, the proof of the claim on the component group is similar to Corollary \ref{prop:charTCFconcomp}.
\end{proof}

With the description of $T(F)^1$ at hand, we can be more precise in the description of $X_T$, in the local case. Given a subtorus $S$ of $T$ we let $X(S)$ denote its character lattice.

\begin{prop}\label{prop:locT1} Let $F$ be a local field.
    The following assertions hold true:
    \begin{itemize}
        \item[(a)] Let $S\subset T$ be the largest $F$-split subtorus of $T$.
        The natural homomorphism $S(F) \to T(F)/T(F)^1$ factors through a homomorphism 
        $S(F)/S(F)^1 \to T(F)/T(F)^1$ which is injective with closed image and finite index. 
        \item[(b)] If $F$ is a local Archimedean field, then $X_T \approx \Lie(\dT)^{\Gamma_\ef}$. If $F$ is a local non-Archimedean field, then there is a finite-to-one map $X_T \to X_S \approx \dT_{\Gamma_\ef}$.
    \end{itemize}
\end{prop}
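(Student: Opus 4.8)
The statement concerns the structure of $X_T$ when $F$ is local, and splits naturally into the two items. The plan is to first establish the subtorus comparison in (a), and then deduce (b) by dualizing, using Corollary \ref{cor:GlobXT} together with the explicit description of $T(F)^1$ as the maximal compact subgroup (Corollary \ref{cor:compactness}).

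For part (a), I would begin with the fact (classical, e.g.\ \cite{Bo}) that the inclusion $S\hookrightarrow T$ of the maximal $F$-split subtorus induces a map $S(F)\to T(F)$ with compact (indeed finite, in the non-Archimedean case, after quotienting by maximal compacts) cokernel on the level of $T(F)/T(F)^1$; more precisely, dualizing the inclusion of lattices $X^{\Gamma_\ef}\hookrightarrow X(S)=\mathrm{Pr}_{\Gamma_\ef}(X)$ (both being lattices of rank $=\dim S$) gives a finite-index inclusion $L_T\hookrightarrow L_S$ after passing to the images of the logarithm maps $\logtc$. The key point is that $X(S)=X/X_{\mathrm{tors}}\cdot(\text{non-trivial part})$ identifies with $\mathrm{Pr}_{\Gamma_\ef}(X)$ and that $S(F)^1=S(F)\cap T(F)^1$, which follows because the defining characters of $T(F)^1$ restrict, on $S$, to a finite-index sublattice of the rational characters of $S$. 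Once one knows $\logtc|_{S(F)}$ has kernel exactly $S(F)^1$ and image a finite-index sublattice (in the $\bbz$-case) or all of $\fa_0$ (in the $\bbr$-case) of $L_T\subset\fa_0$, the factorization $S(F)/S(F)^1\to T(F)/T(F)^1$ is injective with closed image of finite index, since a finite-index subgroup of a discrete group is closed, and in the $\bbr$-case the two groups coincide (both equal $\fa_0$).

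For part (b): in the Archimedean case, $V_F\cong\bbr$, so Corollary \ref{cor:GlobXT} gives directly that $\fe:\Lie(\dT)^{\Gamma_\ef}\to X_T$ is an isomorphism. In the non-Archimedean case, $V_F\cong\bbz$; applying $\Hom_c(-,\bbc^*)$ to the finite-index inclusion $S(F)/S(F)^1\hookrightarrow T(F)/T(F)^1$ from part (a) yields a surjection $X_T=\Hom_c(T(F)/T(F)^1,\bbc^*)\to\Hom_c(S(F)/S(F)^1,\bbc^*)=X_S$ with finite kernel (Pontryagin duality turns a finite-index inclusion of discrete abelian groups into a finite-to-one surjection of the duals — or, working with $\bbc^*$ coefficients, use that $\coker$ and the relevant $\Ext$ are finite since the cokernel is finite). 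It remains to identify $X_S\approx\dT_{\Gamma_\ef}$: since $S$ is $F$-split with character lattice $\mathrm{Pr}_{\Gamma_\ef}(X)$, one has $S(F)\cong (F^*)^{\dim S}$ and $S(F)/S(F)^1\cong\bbz^{\dim S}$ with the Frobenius acting trivially (as $S$ is split); hence $X_S\cong\Hom(\mathrm{Pr}_{\Gamma_\ef}(X),\bbc^*)=((\Lie\dT/N)^*\text{-torus})=\dT/(\text{connected subtorus})=\dT_{\Gamma_\ef}$, using the decomposition $\Lie(\dT)=\Lie(\dT)^{\Gamma_\ef}\oplus N$ recalled before Theorem \ref{thm:mainThm2}. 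Alternatively, and more cleanly, one can invoke Proposition \ref{prop:LnAHaines} (referenced in the introduction) which gives exactly $X_S\approx\dT_{\Gamma_\ef}$ for a split torus.

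\textbf{Main obstacle.} The delicate point is part (a): proving that the image of $S(F)/S(F)^1$ in $T(F)/T(F)^1$ has \emph{finite} index (not merely that it is a sublattice), and that it is closed. Finiteness comes down to the fact that restriction of rational characters $X^{\Gamma_\ef}\to X(S)$ is injective with finite cokernel — this requires knowing that every rational character of $T$ becomes, on restriction to $S$, a rational character, and that conversely a suitable multiple of any character of $S$ extends; equivalently, that $X^{\Gamma_\ef}$ and $X(S)=\mathrm{Pr}_{\Gamma_\ef}(X)$ have the same rank and the natural map between them is injective. This is where one uses that $\mathrm{Pr}_{\Gamma_\ef}$ maps $X$ into $\fa_0^*=\bbr\otimes X^{\Gamma_\ef}$ with image a lattice containing $X^{\Gamma_\ef}$ with finite index — a fact about averaging over the finite group $\Gamma_\ef$. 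The rest of the argument is then formal Pontryagin/linear-algebra bookkeeping.
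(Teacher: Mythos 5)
Your proposal is essentially correct and, for part (a), takes a genuinely different and more elementary route than the paper. The paper proves finiteness of the index by passing to $\overline{T}=T/S$ via Hilbert 90, invoking the almost-direct-product decomposition $T=T_a\cdot S$, using compactness of $T_a(F)$, and finally appealing to a finiteness lemma of Conrad. You instead compare ranks of lattices directly: after using $\logtc$ to identify $S(F)/S(F)^1$ with $\dX(S)$ and $T(F)/T(F)^1$ with $L_T$, the map in question is an injection between free abelian groups of the same finite rank $\dim S$ (resp.\ an injective linear map between real vector spaces of the same dimension in the Archimedean case), hence has finite cokernel. This is shorter and avoids all the algebraic-geometric machinery, while relying only on facts the paper has already established (that $L_T$ is a finite-index sublattice of $\Hom(X^{\Gamma_{E/F}},\bbz)$, from Proposition \ref{prop:tildeXT}, and the finite-index inclusion $X^{\Gamma_{E/F}}\hookrightarrow X(S)$). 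Your argument that $S(F)\cap T(F)^1=S(F)^1$ via the $n$-th-power trick in $\bbr_{>0}$ is also a valid alternative to the paper's use of compactness and maximality of $S(F)^1$.

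Two small slips worth correcting. First, you write that dualizing $X^{\Gamma_{E/F}}\hookrightarrow X(S)$ gives a finite-index inclusion $L_T\hookrightarrow L_S$; the direction is reversed. Dualization gives $\dX(S)\hookrightarrow\Hom(X^{\Gamma_{E/F}},\bbz)$, and what you actually use is that the image of $\dX(S)\cong S(F)/S(F)^1$ in $L_T$ (contained there by construction, since the factorization lands in $T(F)/T(F)^1\approx L_T$) has finite index, which follows from the rank count. Second, in part (b) you write $X_S\cong\Hom(\mathrm{Pr}_{\Gamma_{E/F}}(X),\bbc^*)$, but since $S(F)/S(F)^1\cong\dX(S)=\Hom(X(S),\bbz)$, the correct identification is $X_S\cong\Hom(\dX(S),\bbc^*)=X(S)\otimes\bbc^*$, i.e.\ the \emph{dual} torus $\widehat{S}$, not $\Hom(X(S),\bbc^*)$. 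The paper phrases this as $S(F)/S(F)^1\approx\dX^{\Gamma_{E/F}}$, whence $X_S\approx\Hom(\dX^{\Gamma_{E/F}},\bbc^*)=\dT_{\Gamma_{E/F}}$. Your final identification $X_S\approx\dT_{\Gamma_{E/F}}$ is right, but the intermediate step should be fixed. For part (b) the overall structure of your argument agrees with the paper's.
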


\begin{proof}
We start by proving (a). The composition $S(F) \to T(F) \to T(F)/T(F)^1$ descends to an injective homomorphism
    \[
    \frac{S(F)}{S(F)\cap T(F)^1} \to \frac{T(F)}{T(F)^1}.
    \]
    On the one hand, from the finite-index injection $X^{\Gamma_\ef}\to X(S)$ (see \cite[p. 6]{MW}), we have $S(F)^1 \subseteq S(F)\cap T(F)^1$. On the other hand, as $S(F)$ is closed in $T(F)$ and $T(F)^1$ is compact, we obtain that $S(F)\cap T(F)^1\subseteq S(F)^1$, by maximality of the latter (see Corollary \ref{cor:compactness}), and hence we have an injective map 
    \begin{equation}\label{eq:TSonemap}
    \frac{S(F)}{S(F)^1} \to \frac{T(F)}{T(F)^1}.
    \end{equation}
    We now show that the image of \eqref{eq:TSonemap} has finite index. We first describe the cokernel of \eqref{eq:TSonemap} as a quotient of $T(F)/S(F)$. Indeed, note that the image of \eqref{eq:TSonemap} equals $(S(F)\cdot T(F)^1)/T(F)^1$, hence the cokernel of \eqref{eq:TSonemap} is isomorphic to $T(F)/(S(F)\cdot T(F)^1)$. 
    
    Next observe that by Hilbert 90, the group $T(F)/S(F)$ identifies with $\overline{T}(F)$, where $\overline{T} = T/S$ as algebraic groups. Using this, we can rewrite the cokernel of \eqref{eq:TSonemap} as:
    $T(F)/(S(F)\cdot T(F)^1)\approx \overline{T}(F)/\overline{T(F)^1}$, 
    where we wrote $\overline{T(F)^1}$ for the 
    natural image of $T(F)^1$ in $\overline{T}(F)$. 

    Now let $T_a$ denote the maximal anisotropic subtorus of $T$, then by the almost direct product decomposition $T=T_a\cdot S$ (see \cite[Proposition 8.15]{Bo}) as algebraic tori, there is a natural epimorphism
    of algebraic tori $T_a \to \overline{T}$. 
    Recall that $T_a(F)$ is compact (see \cite[Section 35.3]{Hu} for the Archimedean case and \cite{Pr} for the non-Archimedean case) and hence contained in $T(F)^1$ by Corollary \ref{cor:compactness}. Therefore, denoting by $\overline{T_a(F)}$ the image of $T_a(F)$ in $\overline{T}(F)$, we obtain:
    \begin{equation}
    T(F)/(S(F)\cdot T(F)^1)\approx\overline{T}(F)/\overline{T(F)^1}
    \approx(\overline{T}(F)/\overline{T_a(F)})/(\overline{T(F)^1}/\overline{T_a(F)}).
    \end{equation}
     By (the proof of) \cite[Lemma 3.7]{Con}, this is finite, as was to be shown.
    As for item (b), when $V_F\cong \bbr$, \eqref{eq:TSonemap} is a finite-index inclusion of vector spaces, thus an isomorphism. Hence, $X_T \approx \Lie(\dT)^{\Gamma_{\ef}}$. If $F$ is local non-Archimedean, then as $S(F)/S(F)^1\approx  \dX^{\Gamma_\ef}$ we have $X_S \approx \Hom_c(S(F)/S(F)^1,\bbc^*)\approx \dT_{\Gamma_{\ef}}$  and \eqref{eq:TSonemap} implies the map $X_T\to X_S$ is finite-to-one.
\end{proof}

\begin{rem}
    When $F$ is local non-Archimedean, a precise description of $X_T$ was given in \cite[Section 3.3.1]{Ha}. We recover this description below, in Proposition \ref{prop:LnAHaines}. If the extension $E/F$ is unramified, then $X_T\approx X_S$
    (see \cite[Section 9.5]{Bo2}). 
\end{rem}

\section{Cohomological side}\label{s:CohoSide}
In order to discuss the continuity of the Langlands's map, we obviously need to topologize the cohomology groups first. We shall use the theory developed by C. Moore in the series of papers \cite{Mo1,Mo2,Mo3,Mo4}. In fact, we shall closely follow the axiomatic treatment of \cite[Definition, p. 16]{Mo3}. See also \cite{AM} and \cite[Section 3]{Ra} for other accounts on this theory.

\subsection{Measurable cohomology theory} We briefly recall Moore's construction of his cohomology theory in the next paragraph, for any locally compact group $G$ and Polish $G$-modules $A$ (i.e., Polish abelian groups -- abelian topological groups whose topology admits a separable complete metric \cite[Proposition 1]{Mo3} -- endowed with a continuous $G$-action). We are interested in $A$ a lattice, a vector space or a complex torus, so this theory is well suited. 

First, given any $\sigma$-finite measurable space $(X,\mathcal{B},\mu)$ with $(X,\mathcal{B})$ countably generated and  $A$ a separable metrizable space, choose a finite measure $\nu$ equivalent to $\mu$, and a metric $\rho$ on $A$ of finite diameter. Define $I(X,A)$ as the set of equivalence classes of $\nu$-measurable functions $X \to A$ under the almost-everywhere-equality equivalence relation and equipped with the metric
\begin{equation}\label{eq:metric}
\bar\rho(\varphi,\psi) = \int_{X} \rho(\varphi(x),\psi(x)) d\nu(x).
\end{equation}
The topology on $I(X,A)$ defined by $\bar\rho$ depends only on the measure class of $\nu$ and the topology of $A$ (see \cite[Corollary to Proposition 6]{Mo3}) and not on the particular choices made. Now, let $G$ be a locally compact group with Haar measure $\mu$ and $A$ a Polish $G$-module as above. Define the chain complex $(\uC^*(G,A),\underline{\delta}^*)$ where
$\uC^n(G,A) = I(G^n,A)$ endowed with the metric $\bar\rho$ of (\ref{eq:metric}) and 
$\underline{\delta}^n$ is the usual algebraic coboundary operator restricted to elements of $\uC^n(G,A)$. The subgroups $\uZ^n(G,A),\uB^n(G,A)$ are given the induced topology. We then let $\uH^n(G,A)$ denote the cohomology groups of the chain complex $(\uC^*(G,A),\underline{\delta}^*)$, topologized with the quotient topology. The following proposition summarizes what we need from this construction. All proofs are found in \cite{Mo3} and \cite{AM}.

\begin{prop}\label{prop:MooreSummary}
The following assertions hold true:
\begin{itemize}
\item[(a)] The coboundary operators $\underline{\delta}^n$ are continuous for all $n$.
\item[(b)] If $\uB^n(G,A)$ is a closed subgroup of $\uZ^n(G,A)$, then $\uH^n(G,A)$ is a Polish abelian group.
\item[(c)] For any locally compact $G$ and Polish $A$ we have $\uH^1(G,A) \cong H^1_c(G,A)$ as abelian groups, which is a homeomorphism if $H^1_c(G,A)$ is endowed with the quotient topology and $Z^1_c(G,A),B^1_c(G,A)$ are given the compact-open topology.
\item[(d)] For any short exact sequence $0 \to A \stackrel{i}{\to} B \stackrel{p}{\to} C \to 0$ of Polish $G$-modules with $i$ a homeomorphism onto its image and $p$ continuous and open, all morphisms in the long exact sequence induced by $\uH^*(G,-)$ are continuous.
\item[(e)] Given any $f:G\to G'$ continuous homomorphism between locally compact groups, $A,A'$ Polish $G$- and $G'$-modules and $\varphi:A'\to A$ a continuous homomorphism of abelian groups satisfying
\[
\varphi(f(g)\cdot a') = g\cdot \varphi(a'),
\]
then the induced morphism $\uH^n(G',A') \to \uH^n(G,A)$ is continuous.
\item[(f)] Suppose that the quotient $G/G^\circ$ is compact.
If $A$ is discrete, then $\uH^p(G, A)$ is countable and discrete in its quotient topology, for all $p>0$. If $A$ is a finite-dimensional real vector space, then $\uH^p(G, A)$ is a finite-dimensional real vector space in its quotient topology, for all $p>0$. 
\item[(g)] If $G$ is compact and $A$ is a finite-dimensional real vector space, then $\uH^p(G, A)=0$, for all $p>0$. 
\end{itemize}
\end{prop}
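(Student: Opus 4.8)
The plan is to assemble the seven assertions from Moore's axiomatic development of measurable cohomology in \cite{Mo3}, together with the continuity refinements of \cite{AM}, organizing the verification into three groups: the formal topological statements (a)--(b); the comparison and functoriality statements (c)--(e); and the computational statements (f)--(g), which rely on the structure theory of almost connected locally compact groups.

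\textbf{Statements (a)--(b).} Each $\uC^n(G,A)=I(G^n,A)$ is a Polish abelian group, and the algebraic coboundary $\underline{\delta}^n$ is a finite alternating sum of operators, each of which is either precomposition with a continuous map $G^{n+1}\to G^n$ or postcomposition with the continuous $G$-action on $A$; since both operations are continuous on the relevant $I$-spaces (by the basic functoriality properties of $I$ established in \cite{Mo3}), so is $\underline{\delta}^n$, giving (a). For (b), $\uZ^n(G,A)=\ker\underline{\delta}^n$ is closed in the Polish group $\uC^n(G,A)$, hence Polish; when $\uB^n(G,A)$ is moreover closed in $\uZ^n(G,A)$, the quotient $\uH^n(G,A)$ is a Polish abelian group, since a Hausdorff quotient of a Polish group by a closed subgroup is Polish.

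\textbf{Statements (c)--(e).} Item (c) is the degree-one comparison between measurable and continuous cohomology: a $\nu$-measurable $1$-cocycle agrees almost everywhere with a continuous one, and the two notions of coboundary coincide, which yields the algebraic isomorphism $\uH^1(G,A)\cong H^1_c(G,A)$; that this is a homeomorphism when $Z^1_c$ and $B^1_c$ carry the compact-open topology follows from the corollary to \cite[Proposition 6]{Mo3} together with Proposition \ref{p:CompOpen}(a). Item (d) is the continuity of the morphisms in the long exact cohomology sequence attached to a short exact sequence $0\to A\to B\to C\to 0$ of Polish $G$-modules with the first map a topological embedding and the second open; this is built into Moore's construction of the connecting homomorphism in \cite{Mo3} and is revisited in \cite{AM}. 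Item (e) is the continuity of the pullback $\uH^n(G',A')\to\uH^n(G,A)$ along a compatible pair $(f,\varphi)$, which holds because the induced chain map $I((G')^n,A')\to I(G^n,A)$, namely precomposition with $f^{\times n}$ followed by postcomposition with $\varphi$, is continuous on the relevant $I$-spaces.

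\textbf{Statements (f)--(g), and the main obstacle.} When $G$ is compact and $A$ is a finite-dimensional real vector space, averaging a cochain against normalized Haar measure on $G$ produces a contracting homotopy in positive degrees, whence $\uH^p(G,A)=0$ for all $p>0$; this is the analogue in Moore's framework of the vanishing of the cohomology of a compact group with divisible coefficients, carried out in \cite{Mo3}. For (f), write $G^\circ$ for the identity component; using the compactness of $G/G^\circ$ one invokes the Lyndon--Hochschild--Serre spectral sequence of the extension $1\to G^\circ\to G\to G/G^\circ\to 1$ in Moore's theory, reducing the finiteness (resp. finite-dimensionality) of $\uH^p(G,A)$ to the corresponding statement for $G^\circ$ and for the compact quotient; for discrete $A$ the $G^\circ$-contribution is controlled because a connected group acts trivially on a discrete module (every point-stabilizer is open), and for real-vector-space $A$ one combines (g) applied to $G/G^\circ$ with the structure of $G^\circ$. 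The delicate point --- and the step I expect to require the most care --- is that one must simultaneously control the topology on $\uH^p(G,A)$ (in particular rule out non-closed coboundaries, so that (b) applies and the quotient topology behaves well) and compute the group; this is precisely where the structural input of \cite{Mo3} and \cite{AM} for almost connected $G$ is indispensable. Accordingly, I would present (a)--(e) essentially as bookkeeping over those references, and give (f)--(g) in somewhat more detail, citing \cite{Mo3} and \cite{AM} at the two structural inputs, namely the vanishing over compact groups and the behaviour of the spectral sequence for almost connected $G$.
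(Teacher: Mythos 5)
The paper proves this proposition by pure citation: item (a) is \cite[Proposition 20]{Mo3}, (b) is the discussion on \cite[p.~10]{Mo3}, (c) is \cite[Theorem 3, Corollary 1, Theorem 7]{Mo3}, (d) is \cite[Proposition 25]{Mo3}, (e) is \cite[Proposition 27]{Mo3} with minor modifications, (f) is \cite[Theorem D]{AM}, and (g) is \cite[Theorem A]{AM}. Your proposal reaches the same conclusions but replaces the bare citations with sketches of the underlying arguments. For (a)--(e) your sketches are faithful paraphrases of Moore's proofs (functoriality of the $I(\cdot,\cdot)$-spaces, Polishness of Hausdorff quotients of Polish groups, the degree-one comparison, continuity of the connecting and pullback morphisms), and for (g) the averaging argument is the right conceptual route: the Haar-average of a measurable cochain over a compact group is again measurable, so the classical contracting homotopy survives the passage to Moore's framework.

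The soft spot is (f). You correctly point to the Lyndon--Hochschild--Serre spectral sequence of $1\to G^\circ\to G\to G/G^\circ\to 1$, but the reduction to $G^\circ$ is where the real work lies and your sketch stops there. For discrete $A$, triviality of the $G^\circ$-action does \emph{not} make $\uH^q(G^\circ,A)$ vanish or obviously countable: $\uH^2(G^\circ,A)$ classifies topological central extensions of $G^\circ$ by $A$, and controlling this requires the structure theory of connected locally compact groups as (almost) pro-Lie. For vector-space $A$ the key input is a van Est-type comparison of measurable with Lie-algebra cohomology for the connected factor, again a genuine structural theorem; applying (g) to $G/G^\circ$ only handles the quotient's contribution. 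You acknowledge these are exactly the points where you would cite \cite{Mo3} and \cite{AM}, so your argument is not wrong, but in (f) you end up restating the citation in more words at the one place where the cited result is substantively nontrivial. Given that, the paper's terse bibliographic proof is arguably the more honest presentation; if you want to expand, a short sketch of (a)--(e) and (g) plus an unapologetic citation for (f) would be a better balance.
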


\begin{proof}
Item (a) is \cite[Proposition 20]{Mo3}. For item (b), see the discussion on \cite[p. 10]{Mo3}. 
Item (c) is \cite[Theorem 3, Corollary 1, Theorem 7]{Mo3}. Item (d) is \cite[Proposition 25]{Mo3} and item (e) is, with minor modifications, \cite[Proposition 27]{Mo3}. Assertion (f) is in \cite[Theorem D]{AM} and (g) is in \cite[Theorem A]{AM}.
\end{proof}

Note that item (e) of the previous proposition implies that all homomorphisms between cohomology groups in (Moore's version of) the Lyndon-Hochschild-Serre spectral sequence (see \cite[Chapter I, Sections 3, 4 and 5]{Mo1} and \cite[Theorem 9]{Mo3}) are continuous. In particular, this applies to the maps in the inflation-restriction exact sequence. We shall make extensive use of this property.

\subsection{Low-dimensional measurable cohomology groups}
The biggest issue when dealing with the theory of measurable cohomology groups as described by Moore is the statement (b) in Proposition \ref{prop:MooreSummary}: In general, the measurable groups $\uH^r(G, A)$ may not be Hausdorff (for an explicit example, see \cite[Lemma 7.7]{AM}). 
There are several partial results to describe when these groups are Hausdorff, and for a discussion on this topic, we refer to \cite[Section 1.4]{AM}. 

Throughout this section, $T$ is an algebraic torus over 
$F$, and $E/F$ is a finite Galois extension of $F$ over which 
$T$ splits. Our first
task will be to prove that 
in our context, the cohomology groups $H^1_c(\we, A)$ are complete metric spaces when we take $A$ to be $\dT$, $\Lie(\dT)$ or $X$. We note that in some possibilities for $W_\ef$, this is a consequence of \cite[Theorem D]{AM}, but we will give an alternative proof that works for all possible cases of $W_\ef$.  Note also that when $T$ is split over $F$, the relevant first group of continuous cohomology becomes the space of all continuous functions from $C_F$ to $A$, which is naturally a metrizable space, when endowed with the compact-open topology. Before we continue, we need a preliminary result.

\begin{lem}\label{lem:coinvTrick}
Let $\Gamma$ be a finite group acting linearly on a finite-dimensional real vector space $U$ with $U^\Gamma = 0$.
Suppose that $(u_m)$ is a sequence in $U$ such that $(\gamma(u_m) - u_m)$ converges for all $\gamma\in\Gamma$. Then, $(u_m)$ converges.
\end{lem}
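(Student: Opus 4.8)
The plan is to exploit the averaging projection and a norm-estimate argument. First I would introduce the averaging operator $P = \frac{1}{\#\Gamma}\sum_{\gamma\in\Gamma}\gamma$, which is the orthogonal projection onto $U^\Gamma$ once we fix a $\Gamma$-invariant inner product on $U$ (such an inner product exists by averaging any inner product over $\Gamma$). Since $U^\Gamma = 0$, we have $P = 0$, i.e. $\sum_{\gamma\in\Gamma}\gamma(u_m) = 0$ for every $m$. Rewriting, $\#\Gamma \cdot u_m = \sum_{\gamma\in\Gamma}\bigl(u_m - \gamma(u_m)\bigr)$. Each summand on the right-hand side converges by hypothesis (note $u_m - \gamma(u_m) = -(\gamma(u_m)-u_m)$), so the whole sum converges, hence $u_m$ converges to $\frac{1}{\#\Gamma}\lim_m \sum_{\gamma}\bigl(u_m-\gamma(u_m)\bigr)$.

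The key observation making this work cleanly is the identity $\sum_{\gamma\in\Gamma}\gamma(u_m)=0$, which is precisely what $U^\Gamma=0$ gives us via the projection $P$: the image of $P$ is $U^\Gamma$, so $P$ vanishes identically, and $P(u_m)=0$ unwinds to the stated sum being zero. I would state this as the first step, then the algebraic rearrangement as the second step, and conclude by invoking that a finite sum of convergent sequences in a finite-dimensional vector space converges.

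Alternatively, and perhaps more transparently, one can argue on the quotient: the map $U \to \bigoplus_{\gamma\in\Gamma} U$ sending $u \mapsto (\gamma(u)-u)_{\gamma}$ is $\Gamma$-equivariant with kernel $U^\Gamma = 0$, hence injective; being a linear injection between finite-dimensional spaces, it is a homeomorphism onto its (closed) image. The hypothesis says the image sequence converges, so by the homeomorphism property the preimage sequence $(u_m)$ converges. I expect the main (minor) obstacle is simply making sure the chosen presentation is airtight — there is no real difficulty here, since everything is finite-dimensional linear algebra; the only thing to be careful about is that injectivity of a linear map between finite-dimensional normed spaces automatically yields the two-sided estimate $c\|u\| \le \|(\gamma(u)-u)_\gamma\| \le C\|u\|$ needed to transfer convergence, which is standard. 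I would present the averaging-operator version as the primary argument since it is the shortest and gives the limit explicitly.
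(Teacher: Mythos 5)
Your primary argument is correct and is a genuinely cleaner route than the paper's. The paper's proof uses complete reducibility to pass from $U^\Gamma=0$ to $U_\Gamma=0$, so that every $u\in U$ can be written $\sum_\gamma(\gamma^{-1}-\id)(u_\gamma)$; it then pairs $u_m$ against such a $u$ using a $\Gamma$-invariant inner product, showing $(u_m,u)$ converges for every $u$ and hence that $(u_m)$ converges. Your version short-circuits this dualization: since the averaging idempotent $P=\frac{1}{\#\Gamma}\sum_\gamma\gamma$ has image exactly $U^\Gamma$, the hypothesis $U^\Gamma=0$ forces the operator identity $\sum_\gamma\gamma=0$ on $U$, which rearranges to $\#\Gamma\cdot u_m=\sum_\gamma\bigl(u_m-\gamma(u_m)\bigr)$, a finite sum of sequences that converge by hypothesis. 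This dispenses with the inner product and with any appeal to complete reducibility, and it produces the limit explicitly. One minor stylistic point: calling $P$ an \emph{orthogonal} projection (and introducing a $\Gamma$-invariant inner product for that purpose) is superfluous — the argument only needs the purely algebraic fact that $P$ is idempotent with image $U^\Gamma$. Your alternative argument, via injectivity of the linear map $u\mapsto(\gamma(u)-u)_\gamma$ and the automatic homeomorphism-onto-closed-image property in finite dimensions, is also correct and is closer in spirit to the paper's; it exchanges the inner-product pairing for the standard two-sided norm estimate.
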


\begin{proof}
Since $U$ is completely reducible, $U^\Gamma=0$ 
implies $U_\Gamma = 0$, so that $U = \sum_{\gamma\in\Gamma}(\gamma - \id)(U)$. 
Thus, any element $u\in U$ can be written as a summation $u = \sum_\gamma (\gamma^{-1} - \id)(u_\gamma)$ for suitable elements $u_\gamma\in U$. Now let $(-,-)$ be a $\Gamma$-invariant inner product on $U$. Then, 
\[
(u_m,u) = \sum_\gamma(u_m, (\gamma^{-1} - \id)(u_\gamma)) = \sum_\gamma(\gamma(u_m) - u_m,u_\gamma)
\]
which, from the assumptions, imply $(u_m,u)$ converges and so does $(u_m)$.
\end{proof}

\begin{thm}\label{thm:Hausdorff}
For $F$ local or global, we have that $H^1_c(\we,\dT)$ is a Hausdorff\footnote{In fact, they are Polish abelian groups, by Proposition \ref{prop:MooreSummary}(b) and (c).} abelian group.
\end{thm}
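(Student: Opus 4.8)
The goal is to show that $H^1_c(\we,\dT)\cong \uH^1(\we,\dT)$ is Hausdorff, which by Proposition~\ref{prop:MooreSummary}(b) and (c) amounts to showing that $B^1_c(\we,\dT)$ is closed in $Z^1_c(\we,\dT)$ for the compact-open topology. Recall $W_\ef$ is, up to the compact subgroup $W_\ef^c$ considerations already in play, an extension of $\ga$ by $C_E$, and (using Proposition~\ref{p:CFsplitting}) $C_E\approx C_E^1\times V_E$ with $C_E^1$ compact; so $\we$ has a cocompact subgroup on which the relevant coboundary behaviour is controlled by a finite-group action together with the abelian group $V_E\cong\bbr$ or $\bbz$. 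The plan is to reduce the closedness of coboundaries to an elementary convergence statement about sequences, and there Lemma~\ref{lem:coinvTrick} is precisely the tool: a coboundary is the map $\omega\mapsto \overline\omega(z)-z$ (written additively, using that $\we$ acts through $p$), and if a sequence of such coboundaries converges uniformly on compacta, then evaluating on group elements projecting to each $\gamma\in\ga$ shows that the differences $\gamma(z_m)-z_m$ converge in $\dT$; passing to the Lie algebra and splitting off the $\ga$-invariants, Lemma~\ref{lem:coinvTrick} forces $z_m$ itself to converge modulo $\dT^{\ga}$, which is exactly what is needed for the limiting cochain to again be a coboundary.

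Concretely, I would proceed as follows. First reduce to $\we$ itself (rather than $W_F$) using Proposition~\ref{prop:LocTriv}. Next, write any continuous coboundary $b_z\in B^1_c(\we,\dT)$ as $b_z(\omega)=\overline\omega(z)z^{-1}$ for $z\in\dT$, noting that $z$ is determined modulo $\dT^{\ga}$; thus $B^1_c(\we,\dT)\cong \dT/\dT^{\ga}$. Then suppose $b_{z_m}\to c$ in the compact-open topology with $c\in Z^1_c(\we,\dT)$. For each $\gamma\in\ga$ choose $\omega_\gamma\in\we$ with $\overline{\omega_\gamma}=\gamma$; evaluation at $\omega_\gamma$ is continuous on $Z^1_c$, so $b_{z_m}(\omega_\gamma)=\gamma(z_m)z_m^{-1}$ converges in $\dT$. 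Lift to the Lie algebra: write $\dT = \Lie(\dT)/\Lambda$ for a lattice $\Lambda$, decompose $\Lie(\dT)=\Lie(\dT)^{\ga}\oplus N$ as $\ga$-modules, and let $u_m$ be the $N$-component of a lift of $z_m$. The convergence of $\gamma(z_m)z_m^{-1}$ in $\dT$ gives, after adjusting lifts by elements of $\Lambda$ (a discrete, hence locally constant, correction), convergence of $\gamma(u_m)-u_m$ in $N$ for every $\gamma$. Since $N^{\ga}=0$, Lemma~\ref{lem:coinvTrick} applies to $U=N$ and yields that $(u_m)$ converges, say to $u_\infty$. Then $z_m\to z_\infty := \exp(u_\infty)$ in $\dT/\dT^{\ga}$, and by continuity of $z\mapsto b_z$ (a consequence of the continuity of the action and of evaluation, as in Proposition~\ref{p:CompOpen}) we get $c=\lim b_{z_m}=b_{z_\infty}\in B^1_c(\we,\dT)$. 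Hence $B^1_c(\we,\dT)$ is closed and $H^1_c(\we,\dT)$ is Hausdorff.

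A few details need care but are routine: the discrete correction by $\Lambda$ when lifting from $\dT$ to $\Lie(\dT)$ must be handled by passing to a subsequence or by noting it is eventually constant on the relevant compact set; the passage between the compact-open topology on $B^1_c$ and the quotient topology on $\dT/\dT^{\ga}$ should be identified explicitly (the orbit map $\dT\to B^1_c$, $z\mapsto b_z$, is continuous, open onto its image, with kernel $\dT^{\ga}$); and one should remark that convergence of $b_{z_m}$ on a single compact set generating enough of $\we$ suffices, but evaluating at finitely many $\omega_\gamma$ is already enough to drive the argument. The only place where one genuinely uses structural input about $\we$ — as opposed to soft topology — is the reduction that makes the differences $\gamma(z_m)z_m^{-1}$ the \emph{only} obstruction, i.e.\ that a $1$-coboundary of $\we$ is pinned down by its values on lifts of $\ga$ since $C_E$ acts trivially on $\dT$; this is immediate from the definition of the $\we$-action on $\dT$ via $p$.

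\textbf{Main obstacle.} The technical heart is not the soft-topology bookkeeping but the lifting step: transferring the convergence of $\gamma(z_m)z_m^{-1}$ from the quotient torus $\dT$ up to the vector space $\Lie(\dT)$, where Lemma~\ref{lem:coinvTrick} lives, while controlling the $\Lambda$-ambiguity. Everything else — continuity of evaluation and of the orbit map, closedness of points in $\dT$, the $\ga$-module splitting of $\Lie(\dT)$ — is standard, so I expect the write-up to spend most of its effort making that lift precise and uniform.
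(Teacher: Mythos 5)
Your overall strategy coincides with the paper's: reduce to closedness of $B^1_c(\we,\dT)$ in $Z^1_c(\we,\dT)$, evaluate coboundaries at finitely many representatives $\omega_\gamma$ of $\ga$ (using that $C_E$ acts trivially on $\dT$), lift to $\Lie(\dT)$, project away the $\ga$-invariants, and invoke Lemma~\ref{lem:coinvTrick}. That skeleton is right. However, there is a genuine gap exactly where you flagged the ``main obstacle,'' and your proposed remedy does not close it.

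You write that convergence of $\gamma(z_m)z_m^{-1}$ in $\dT$ gives, ``after adjusting lifts by elements of $\Lambda$ (a discrete, hence locally constant, correction),'' convergence of $\gamma(u_m)-u_m$ in $N$ for every $\gamma$. The problem is that the $\Lambda$-ambiguities for different $\gamma$ are not independent: they must all arise from a \emph{single} adjustment $\nu_m\mapsto\nu_m+\mu_m$ of the lift of $z_m$, i.e.\ for all $\gamma$ simultaneously one needs $\gamma(\nu_m+\mu_m)-(\nu_m+\mu_m)$ to converge. Choosing, for each $\gamma$ separately, a $\Lambda$-correction that makes $\gamma(\nu_m)-\nu_m$ converge is easy; making those choices coherent across $\gamma$ is not, and ``locally constant'' is not an argument. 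Calling the remainder ``routine'' misjudges where the difficulty sits.

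The paper resolves this by observing that the lifting ambiguity is \emph{purely imaginary}: $\exp(\nu)=1$ exactly for $\nu\in(2\pi i)X\subset i\,\Lie(\dT)_0$. Writing $\nu_m=\lambda_m+i\mu_m$ with $\lambda_m,\mu_m\in\Lie(\dT)_0=\bbr\otimes X$, one uses the freedom to translate by $(2\pi i)X$ to normalize $(\mu_m)$ into a bounded region and then (passing to a subsequence, which is legal since the topology is metrizable) to make $(\mu_m)$ converge. After this normalization, $\exp\big(\gamma(\lambda_m)-\lambda_m\big)=b_m(\gamma)\exp\big(\gamma(i\mu_m)-i\mu_m\big)^{-1}$ converges, and now the decisive point is that $\gamma(\lambda_m)-\lambda_m$ lies in the real form $\Lie(\dT)_0$, on which $\exp$ is a homeomorphism onto its image (it has trivial kernel there). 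So convergence in $\dT$ directly upgrades to convergence in $\Lie(\dT)_0$ with \emph{no} further $\Lambda$-correction needed. Only then is the invariant part projected out and Lemma~\ref{lem:coinvTrick} applied, to the real part. Your proposal omits the real/imaginary split and the normalization of the imaginary part, which is precisely the mechanism that makes the lift unambiguous; without it, the argument does not go through as stated.

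Two smaller remarks. First, the reduction via Proposition~\ref{prop:LocTriv} in your first step is superfluous: the theorem is already stated for $\we$. Second, the paper works in Moore's $\uC^1,\uB^1$ framework and extracts pointwise a.e.\ convergence via \cite[Proposition~6]{Mo3}; your direct use of the compact-open topology and evaluation maps is an acceptable equivalent by Proposition~\ref{prop:MooreSummary}(c), so that part is fine.
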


\begin{proof}
It suffices to show that $\uB^1(\we,\dT)$ is closed in $\uC^1(\we,\dT)$. To that end, let $(b_m)_{m\in\bbn}$ be a sequence in $\uB^1(\we,\dT)$ converging to some $f \in \uC^1(\we,\dT)$. Since $ \uC^1(\we,\dT) $ is a metric space (with metric as in \eqref{eq:metric}), it suffices to check sequential closedness, and we shall show that $f \in \uB^1(\we,\dT)$. Associated to the sequence $(b_m)$ is a sequence $(t_m)$ in $\dT$ such that $b_m(\omega) = \omega(t_m)t_m^{-1}$ for each $\omega\in \we$. Furthermore, for each $m\in\bbn$ we can choose $\nu_m\in \Lie(\dT)$ such that $\exp(\nu_m) = t_m$. Then, for all $\omega\in\we$
\[
b_m(\omega) = \omega(\exp(\nu_m))\exp(-\nu_m) = \exp(\omega(\nu_m)-\nu_m),
\]
since the action of $\we$ commutes with the exponential map. We will show that we can modify the sequence $(\nu_m)$ without changing $(b_m)$ in such a way that $(\nu_m)$ becomes convergent, and hence the limiting cocycle is in fact a coboundary.

From \cite[Proposition 6]{Mo3}, there exists a measure-zero set $S\subseteq\we$ outside which $b_m(\omega) \to f(\omega)$ pointwise. Hence, we can choose representatives $\{\gamma\} \subseteq \we\setminus S$ for the quotient $\we/C_E \cong \ga$ such that, for each $\gamma$, we have
 \[
b_m(\gamma)=\exp(\gamma(\nu_m)-\nu_m)\to f(\gamma).
\]
Write $\nu_m = \lambda_m + i \mu_m$, where $\lambda_m$ and $\mu_m$ are in the real form $\Lie(\dT)_0 =  \bbr\otimes X$ of $\Lie(\dT)=\bbc\otimes X$. First, note that each $\nu_m\in \Lie(\dT)$ is determined up to translations by $(2\pi i)X$. So, we can and will assume that the imaginary parts $(\mu_m)$ lie in a bounded region of $\Lie(\dT)_0$. Passing to a subsequence, if needed, we might as well assume $(\mu_m)$ is convergent. Then, since $\lambda_m = \nu_m - i\mu_m$, we get
\[
\exp(\gamma(\lambda_m)-\lambda_m) =b_m(\gamma)\exp(\gamma(i\mu_m)-i\mu_m)^{-1}
\]
and hence $\exp(\gamma(\lambda_m)-\lambda_m)$ converges, since both factors in the right-hand side converge. Using that the exponential map restricted to $\Lie(\dT)_0$ is a homeomorphism onto its image, it then follows that
$(\gamma(\lambda_m)-\lambda_m)$ is convergent. Now, if $\nu \in \Lie(\dT)_0^\ga$, then 
\[(\gamma(\lambda_m + \nu)-(\lambda_m+\nu))=(\gamma(\lambda_m)-\lambda_m).\]
Hence, denoting by $\textup{Proj}_U$ the projection (along $\Lie(\dT)_0^\ga$) onto a $\ga$-stable complement $U$ of $\Lie(\dT)_0^\ga$, we may replace $(\lambda_m)$ by $(\textup{Proj}_U(\lambda_m))$ without changing the initial coboundary sequence $(b_m)$.
 We can now use Lemma \ref{lem:coinvTrick} to conclude that $(\lambda_m)$ converges and, hence, $\lim_m\nu_m = \nu$ for some $\nu\in\Lie(\dT)$. Setting $t = \exp(\nu)$ we obtain that $f(\gamma) = \gamma(t)t^{-1}$ for all $\gamma$. Hence, as $\we$ can be written as the disjoint union $\we = \cup_\gamma \gamma C_E$ and $C_E$ acts trivially on $\dT$, we get $b_m(\omega)\to f(\omega)=\omega(t)t^{-1}$ for all $\omega\notin S$, i.e., $f\in\uB^1(\we,\dT)$.
\end{proof}

In fact, the proof 
of Theorem \ref{thm:Hausdorff} yields  similar consequences for $X$ and $\Lie(\dT)$.

\begin{cor}\label{cor:Hausdorff}
    For $F$ local or global, both $H_c^1(W_\ef,X)$ and $H_c^1(W_\ef,\Lie(\dT))$ are Hausdorff\footnote{Similar comment to the one of Theorem \ref{thm:Hausdorff} applies.} abelian groups.
\end{cor}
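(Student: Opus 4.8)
The plan is to repeat the argument of Theorem \ref{thm:Hausdorff} almost verbatim; the two cases at hand are in fact easier, because no exponential map is involved: the coboundary attached to $a\in A$ is simply $\omega\mapsto\omega(a)-a$. By Proposition \ref{prop:MooreSummary}(b),(c) it suffices to show that $\uB^1(\we,A)$ is closed in the metric space $\uC^1(\we,A)$ for $A=X$ and $A=\Lie(\dT)$ (both are Polish $\we$-modules: $X$ is a countable discrete group and $\Lie(\dT)$ a finite-dimensional complex vector space, on each of which $\we$ acts through $\ga$). Since $\uC^1$ is metrizable, sequential closedness is enough; and since every coboundary is constant on the cosets of $C_E$ and is pulled back from $\ga$, one invokes \cite[Proposition 6]{Mo3} exactly as in Theorem \ref{thm:Hausdorff} to fix, outside a null set, a system of representatives $\{\gamma\}$ for $\we/C_E\cong\ga$ along which a convergent sequence of coboundaries converges pointwise.

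For $A=\Lie(\dT)$, given $b_m(\omega)=\omega(\nu_m)-\nu_m$ with $b_m\to f$ in $\uC^1(\we,\Lie(\dT))$, I would first replace each $\nu_m$ by its component in the $\ga$-stable complement $N$ of $\Lie(\dT)^\ga$ (the sum of the nontrivial irreducible subrepresentations, so $N^\ga=0$), which leaves $b_m$ unchanged. Along the representatives $\gamma$ one has $\gamma(\nu_m)-\nu_m\to f(\gamma)$, and since the action factors through $\ga$ these representatives exhaust $\ga$; as $N^\ga=0$, Lemma \ref{lem:coinvTrick} (applied to $N$ regarded as a real vector space) forces $\nu_m\to\nu$ for some $\nu\in N$. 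Then $f(\gamma)=\gamma(\nu)-\nu$ on the representatives and, $f$ being constant on each coset $\gamma C_E$, we get $f(\omega)=\omega(\nu)-\nu$ almost everywhere, i.e. $f\in\uB^1(\we,\Lie(\dT))$.

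For $A=X$ the argument is even shorter, because discreteness of $X$ means a convergent sequence in $X$ is eventually constant, so Lemma \ref{lem:coinvTrick} is not needed. Writing $b_m(\omega)=\omega(x_m)-x_m$ with $b_m\to f$, one gets $\gamma(x_m)-x_m$ eventually constant along the finitely many representatives $\gamma$, so there is $m_0$ with $\gamma(x_m-x_{m_0})=x_m-x_{m_0}$ for all $\gamma\in\ga$ and all $m\ge m_0$, i.e. $x_m-x_{m_0}\in X^\ga$. Since modifying $x_m$ by an element of $X^\ga$ does not change the coboundary, $b_m=b_{m_0}$ for all $m\ge m_0$, and as $\uC^1(\we,X)$ is Hausdorff this yields $f=b_{m_0}\in\uB^1(\we,X)$. (When $T$ splits over $F$ the group $\ga$ is trivial and $\uB^1$ vanishes, so the statement is trivial, consistent with the remark preceding the corollary.) I do not expect any genuine obstacle here beyond the bookkeeping already done in Theorem \ref{thm:Hausdorff}: the only point needing care, namely passing from convergence in the $\uC^1$-metric to pointwise control on a transversal of $C_E$, is handled verbatim by \cite[Proposition 6]{Mo3} together with the fact that all the coboundaries in sight are inflated from $\ga$.
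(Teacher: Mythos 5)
Your proof is correct and takes essentially the same route as the paper: reduce to sequential closedness of the coboundaries, pass via \cite[Proposition 6]{Mo3} to a transversal of $C_E$ where convergence is pointwise, then use discreteness of $X$ (eventual stationarity) for the lattice case and the projection onto the $\ga$-stable complement of $\Lie(\dT)^\ga$ together with Lemma \ref{lem:coinvTrick} for the vector-space case. The paper only states that $A=\Lie(\dT)$ is ``analogous but easier'' than Theorem \ref{thm:Hausdorff} and spells out $A=X$; you have filled in exactly the expected details.
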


\begin{proof}
As before, from Proposition \ref{prop:MooreSummary}, items (b) and (c), it suffices to show that the respective measurable coboundary group is closed in the cases $A=X$ or $A=\Lie(\dT)$ as well. The proof in the case $A=\Lie(\dT)$ is analogous but easier than the case $A=\dT$ treated in Theorem \ref{thm:Hausdorff}.  The case $A=X$ is even easier: 
Given $(b_m)$ a sequence in $\uB^1(W_\ef,X)$ that converges to $f\in \uC^1(W_\ef,X)$, we can choose an associated sequence $(x_m)$ in $X$ such that $b_m(\omega) = \omega(x_m) - x_m$
for all $\omega\in W_{E/F}$. This implies that for all $m$, the 
coboundary $b_m:W_{E/F}\to X$ is constant on $C_E$-cosets, hence 
descends to a coboundary $b^0_m:\Gamma_{E/F}\to X$ on the finite quotient $\Gamma_{E/F}$ of $W_{E/F}$. 
As in the proof of Theorem \ref{thm:Hausdorff}, $b^0_m:\Gamma_{E/F}\to X$ must converge pointwise, which is  equivalent to $b^0_m$ being stationary for sufficiently large $m$. This implies that 
there exists $x\in X$ such that $b^0_m$ converges to the
coboundary $b^0(\gamma):=\gamma(x)-x$, and hence that 
$f(\omega)=\omega(x)-x$ is a coboundary.
\end{proof}

\begin{prop}\label{prop:DiscreteIsDiscrete}
    Let $F$ be local or global. Then, $\uH^1(W_\ef,X)$ and $\uH^2(W_\ef, X)$ are countable and, as topological spaces, discrete.
\end{prop}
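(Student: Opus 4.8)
The plan is to compute $\uH^1(\we,X)$ and $\uH^2(\we,X)$ by means of the Lyndon--Hochschild--Serre (LHS) spectral sequence attached to $1\to C_E\to\we\to\ga\to1$, thereby reducing matters to cohomology of \emph{compact} groups, where Moore's finiteness statement Proposition~\ref{prop:MooreSummary}(f) applies. Write $W:=\we$ and $\Gamma:=\ga$. I will use repeatedly that $C_E$ acts trivially on $X$, that $C_E\approx C_E^1\times V_E$ as topological groups with $C_E^1$ compact (Proposition~\ref{p:CFsplitting}), that $\Hom_c(C_E^1,X)=0$ (a compact group has no nontrivial continuous homomorphisms into a torsion-free discrete group), and that all maps in Moore's LHS spectral sequence are continuous and the filtration quotients of $\uH^n(W,X)$ are the terms $E_\infty^{p,n-p}$ with their subquotient topologies (the remark following Proposition~\ref{prop:MooreSummary}). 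I split into the cases $V_F\cong\bbr$ and $V_F\cong\bbz$.

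\textbf{The $\bbr$-type case.} Here $V_E\cong\bbr_+$ is connected, so $W^\circ=C_E^\circ=(C_E^1)^\circ\times V_E$ and $W/W^\circ$ is an extension of the finite group $\Gamma$ by the compact group $C_E^1/(C_E^1)^\circ$; hence $W/W^\circ$ is compact. Since $X$ is discrete, Proposition~\ref{prop:MooreSummary}(f) applies directly and already gives that $\uH^p(W,X)$ is countable and discrete for every $p\ge1$, in particular for $p=1,2$.

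\textbf{The $\bbz$-type case.} Now $V_E\cong\bbz$, of cohomological dimension $1$. The inputs I would feed into the LHS spectral sequence $E_2^{p,q}=\uH^p(\Gamma,\uH^q(C_E,X))\Rightarrow\uH^{p+q}(W,X)$ are the low-degree coefficient cohomologies: (i) $\uH^0(C_E,X)=X$; (ii) $\uH^1(C_E,X)=\Hom_c(C_E,X)=\Hom_c(C_E/C_E^1,X)=\Hom(V_E,X)\cong X$ as $\Gamma$-modules, using $\Hom_c(C_E^1,X)=0$ and that $\Gamma$ acts trivially on $V_E=C_E/C_E^1$ (Remark~\ref{rem:GalSplitting}); (iii) $\uH^2(C_E,X)\cong\uH^2(C_E^1,X)$, obtained by running the spectral sequence for the product $C_E=C_E^1\times V_E$ and using $\mathrm{cd}(V_E)=1$ together with $\uH^1(C_E^1,X)=0$, while $\uH^2(C_E^1,X)$ is countable and discrete by Proposition~\ref{prop:MooreSummary}(f) since $C_E^1$ is compact. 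Thus for $q\le2$ each coefficient group $\uH^q(C_E,X)$ is a countable \emph{discrete} abelian group, and hence each $E_2^{p,q}$ with $q\le2$ --- being the cohomology of the \emph{finite} group $\Gamma$ with countable discrete coefficients --- is again countable and discrete (and finite when $p\ge1$ and the coefficients form a lattice). The spectral sequence then endows $\uH^1(W,X)$ and $\uH^2(W,X)$ with finite filtrations whose successive quotients are the $E_\infty^{p,n-p}$, each a subquotient of the discrete group $E_2^{p,n-p}$, hence discrete; therefore $\uH^1(W,X)$ and $\uH^2(W,X)$ are countable (iterated extensions of countable groups) and, moreover, discrete, since any extension $0\to N\to G\to Q\to0$ of topological groups with $N$ discrete and $Q$ discrete in its quotient topology has $N$ open in $G$, so $\{e\}$ is open in $G$. (Once countability is established, the case $p=1$ also follows from Corollary~\ref{cor:Hausdorff}, which makes $H^1_c(W,X)=\uH^1(W,X)$ a Polish group, together with the fact that a countable Polish group is discrete.)

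\textbf{Expected main obstacle.} The abstract countability of $\uH^1$ and $\uH^2$ is a routine consequence of the $E_2$-estimate; the genuinely delicate part is the upgrade to \emph{discreteness}, which needs the spectral-sequence filtration of $\uH^n(W,X)$ to be by clopen subgroups with discrete successive quotients. This rests on two facts that must be invoked carefully: that Moore's LHS spectral sequence is a spectral sequence of topological groups (so that the identifications $F^p/F^{p+1}\cong E_\infty^{p,n-p}$ are homeomorphisms onto subquotients of the discrete $E_2$-terms), and that the low-degree coefficient cohomologies $\uH^q(C_E,X)$ with $q\le2$ are themselves discrete --- the decomposition $C_E\approx C_E^1\times V_E$, with $C_E^1$ compact and $\mathrm{cd}(V_E)\le1$, is exactly what makes the latter computation go through. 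A secondary point to verify is the $\Gamma$-module identification $\uH^1(C_E,X)\cong X$ used in (ii).
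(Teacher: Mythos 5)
Your countability argument is sound but follows a genuinely different route from the paper. You run the LHS spectral sequence directly for $C_E\triangleleft W_\ef$ (with finite quotient $\ga$), reducing the coefficient groups $\uH^q(C_E,X)$ for $q\leq 2$ to cohomology of the compact $C_E^1$ via the topological splitting $C_E\approx C_E^1\times V_E$ and $\mathrm{cd}(V_E)\leq 1$, and then observing that $\uH^p(\ga,-)$ with countable coefficients is countable. The paper instead works with the absolute Weil group: it uses $0\to W_F^1\to W_F\to \bbz\to 0$ (exploiting $\mathrm{cd}(\bbz)=1$ and almost-connectedness of $W_F^1$) to get countability of $\uH^r(W_F,X)$, and then transfers to $W_\ef$ via inflation-restriction along $0\to W_E^c\to W_F\to W_\ef\to 0$ using compactness of $W_E^c$. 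Both reduce ultimately to Moore's finiteness for compact groups, but the paper's decomposition avoids the spectral sequence for the non-compact kernel $C_E$ and only ever uses the low-degree inflation-restriction exact sequence rather than the full filtration. In the $\bbr$-case your argument and the paper's are essentially the same.

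The genuine gap is in the upgrade from countability to discreteness, which you yourself flag as the delicate step. Your argument needs the filtration quotients $F^p\uH^n/F^{p+1}\uH^n$ to be homeomorphic (not merely isomorphic) to the $E_\infty^{p,n-p}$-terms, so that discreteness of the $E_2$-page propagates. The paper's remark after Proposition~\ref{prop:MooreSummary} only asserts that the homomorphisms appearing in Moore's spectral sequence are continuous; it does not assert (and the cited Moore references do not straightforwardly give) that the filtration isomorphisms identifying subquotients of $\uH^n(W,X)$ with $E_\infty$-terms are homeomorphisms for the quotient topologies. The paper sidesteps this entirely by first establishing countability and then invoking a single clean fact (cited from the proof of \cite[Proposition 7.2]{AM}): any countable quotient $B/C$ with $B$ Polish and $C$ an analytic normal subgroup is discrete. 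This applies to $\uH^2(W_\ef,X)=\uZ^2/\uB^2$ because $\uZ^2(W_\ef,X)$ is a closed subgroup of the Polish group $\uC^2(W_\ef,X)$, while $\uB^2(W_\ef,X)$ is the continuous image of $\uC^1(W_\ef,X)$, hence analytic (this is a Baire-category argument that does not presuppose Hausdorffness of the quotient). Your fallback via Corollary~\ref{cor:Hausdorff} only covers $\uH^1$, since $\uH^2(W_\ef,X)$ is not known a priori to be Hausdorff --- which is exactly why the more general Polish/analytic fact is needed, and why your spectral-sequence-topology route does not close the argument for $\uH^2$ as written. A secondary small point: the triviality of the $\ga$-action on $C_E/C_E^1\cong V_E$ follows from the $\ga$-invariance of $|\cdot|_E$ (as in Remark~\ref{rem:GalSplitting}), not from the existence of a $\ga$-equivariant splitting, which typically fails in the $\bbz$-cases; the conclusion you need is correct but the citation is slightly off.
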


\begin{proof}
It is a general fact that a countable quotient 
$B/C$, where $B$ is a Polish group with normal subgroup $C$ which is an analytic subset, is necessarily discrete as a topological space, and hence Hausdorff. For this statement and its proof, we refer the reader to the proof of Proposition 7.2 in \cite{AM}.  
Hence, it suffices to show that $\uH^r(W_\ef, X)$ is 
countable for $r = 1, 2$.     

    In the $\bbr$-case, note that $C_E$ is almost connected in the sense that $C_E/C_E^\circ$ is compact; it is profinite in the global case \cite[Chapter VIII, \textsection 2]{NSW} and finite in the local case.
    Hence, $W_\ef$ is almost connected, and it follows from \cite[Proposition 1.3]{Mo2} that $\uH^r(W_\ef,X)$ is countable for $r=1,2$. 
    
    For the $\bbz$-case, we use the computations done in \cite[Proposition 6]{Ra} to deduce that 
    $\uH^r(W_\ef,X)$ is countable, for $r=1,2$. 
    Indeed, first recall that as $\bbz$ is free and discrete, then the measurable, continuous and abstract cohomology theories agree (see table in \cite[p. 913]{AM}). We thus have $\uH^r(\bbz, A) = 0$ for any $\bbz$-module $A$ and for all $r>1$ \cite[Remark, p.170]{Wb}.
    So, from the exact sequence $0\to W^1_F \to W_F \to \bbz\to 0$, where $W^1_F$ is compact (and profinite) and $W_F$ is the absolute Weil group of $F$,
    we obtain from the inflation-restriction exact sequence that
    \[
    0\to \uH^1(\bbz,X^{W^1_F})\to\uH^1(W_F, X)\to \uH^1(W^1_F,X)^{\lpi\textup{Fr}\rpi}\to 0.
    \]
    For
    any $W_F$-module $A$, we have $\uH^1(\bbz, A) = A/(\textup{Fr}-1)A$ with $\textup{Fr}$ the image of $1\in \bbz$ in $\Aut(A)$, so $\uH^1(\bbz,X^{W^1_F})$ is countable. As $W^1_F$ is profinite, it is almost connected, so we also have that $\uH^r(W_F^1,X)^{\lpi\textup{Fr}\rpi}$ is
    countable for $r=1,2$ \cite[Proposition 1.3]{Mo2}, from which $\uH^1(W_F, X)$ is countable. But using again that $\uH^r(\bbz, A) = 0$ for $r>1$, we get the higher inflation-restriction sequence just as in the proof of \cite[Proposition 6]{Ra} for $n=2$:
    \[
    0\to \uH^1(\bbz,\uH^{1}(W^1_F, X))\to\uH^2(W_F, X)\to \uH^2(W^1_F,X)^\textup{Fr}\to 0.
    \]
    We conclude, just as for $\uH^1(W_F, X)$, that $\uH^2(W_F, X)$ is countable. From the extension $0\to W_E^c \to W_F \to W_\ef \to 0$,  the inflation-restriction exact sequence yields
    \begin{multline*}
        0 \to \uH^1(W_\ef, X) \to \uH^1(W_F, X) \to \\
        \to \uH^1(W_E^c, X)^{W_\ef}\to\uH^2(W_\ef, X) \to \uH^2(W_F, X).
    \end{multline*}
    Recall that $W_E^c$ is compact.
    Since $W_E$ acts trivially on $X$ then $\uH^1(W_E^c,X)\cong \Hom_c(W_E^c,X)=0$ by the compactness of $W_E^c$, and
    we conclude that $\uH^r(W_\ef, X)$ for $r=1,2$ is also countable, finishing the proof.
\end{proof}

\begin{prop}\label{prop:H1middle}
For $F$ a global or a local field,
we have a $\bbc$-linear isomorphism
\[
\uH^1(\we,\Lie(\dT)) \approx \Lie(\dT)^\ga.
\]
\end{prop}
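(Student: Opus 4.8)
The plan is to compute $\uH^1(\we,\Lie(\dT))$ by combining the inflation--restriction exact sequence associated to the extension $0\to C_E\to \we\to \ga\to 0$ with the fact that $\Lie(\dT)$ is a finite-dimensional real (indeed complex) vector space on which $C_E$ acts trivially. Since $C_E$ acts trivially on $V:=\Lie(\dT)$, we have $\uH^1(C_E,V)\cong\Hom_c(C_E,V)$, and the relevant low-degree terms of the five-term inflation--restriction sequence read
\[
0\to \uH^1(\ga,V^{C_E})\to \uH^1(\we,V)\to \uH^1(C_E,V)^{\ga}\to \uH^2(\ga,V^{C_E}).
\]
Because $\ga$ is finite and $V$ is a real vector space, $\uH^i(\ga,V)=0$ for all $i>0$ (this is standard group cohomology with $\bbq$-vector-space coefficients, and is also the content of Proposition \ref{prop:MooreSummary}(g) applied to the finite group $\ga$). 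Hence the outer terms vanish and inflation--restriction collapses to an isomorphism $\uH^1(\we,V)\cong \uH^1(C_E,V)^{\ga}=\Hom_c(C_E,V)^{\ga}$. All these maps are continuous by Proposition \ref{prop:MooreSummary}(e), so the isomorphism is topological.

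Next I would identify $\Hom_c(C_E,V)^{\ga}$ explicitly. Using the (topological and algebraic) splitting $C_E\approx C_E^1\times V_E$ of Proposition \ref{p:CFsplitting}(a) together with Proposition \ref{p:CompOpen}(f), we get $\Hom_c(C_E,V)\approx \Hom_c(C_E^1,V)\times \Hom_c(V_E,V)$. Since $C_E^1$ is compact (Proposition \ref{p:CFsplitting}(b)) and $V$ is a vector space — which has no nontrivial compact subgroups — we have $\Hom_c(C_E^1,V)=0$. Thus $\Hom_c(C_E,V)\approx \Hom_c(V_E,V)$. In the $\bbr$-case $V_E\cong\bbr$, so $\Hom_c(\bbr,V)\cong V$ (every continuous homomorphism $\bbr\to V$ is of the form $t\mapsto tv$, smoothly by \cite[Theorem 3.39]{Wa}); in the $\bbz$-case $V_E\cong\bbz$, so $\Hom_c(\bbz,V)\cong V$ likewise. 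In either case $\Hom_c(C_E,V)\approx V$ as topological groups, and the key point is to track the $\ga$-action. The subtlety, flagged in Remark \ref{rem:GalSplitting}, is that the splitting of $C_E$ need not be $\ga$-equivariant; however, since $|\cdot|_E$ is $\ga$-invariant, the subgroup $C_E^1$ is $\ga$-stable, and the induced action of $\ga$ on the quotient $C_E/C_E^1\cong V_E\subset\bbr_+$ is trivial (Galois permutes places but preserves the adelic/local absolute value). Therefore the identification $\Hom_c(C_E,V)\cong V$ carries the $\ga$-action on the source to the given linear action of $\ga$ on $V=\Lie(\dT)$ on the target, so taking $\ga$-invariants yields $\Hom_c(C_E,V)^{\ga}\cong V^{\ga}=\Lie(\dT)^{\ga}$.

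Finally, I would note the $\bbc$-linearity: all the identifications above are morphisms of $\bbc$-vector spaces (the $\bbc$-structure on $\uH^1(\we,V)$ comes from the $\bbc$-structure on the coefficient module $V=\Lie(\dT)=\bbc\otimes X$, and inflation, restriction, and the splitting maps are all $\bbc$-linear), so the composite isomorphism $\uH^1(\we,\Lie(\dT))\approx\Lie(\dT)^{\ga}$ is $\bbc$-linear, as claimed.

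The main obstacle I anticipate is the bookkeeping around the non-equivariance of the $C_E\approx C_E^1\times V_E$ splitting: one must be careful to argue the descent of the $\ga$-action to $C_E/C_E^1$ rather than naively invoking a $\ga$-module splitting, and to confirm that this descended action is trivial so that the invariants really produce $V^\ga$ with its original linear $\ga$-action. A secondary point requiring care is that the isomorphism from inflation--restriction is genuinely a homeomorphism and not merely an algebraic isomorphism; this is handled by Proposition \ref{prop:MooreSummary}(e) (continuity of all spectral-sequence maps) together with the openness one gets because the outer terms of the five-term sequence vanish, so that inflation is a continuous bijection between Polish groups (Hausdorff by Corollary \ref{cor:Hausdorff}) and hence a homeomorphism by the open mapping theorem.
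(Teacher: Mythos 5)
Your proof is correct, but it travels a genuinely different road from the paper. The paper applies inflation--restriction to the extension $0\to W_\ef^1\to\we\to V_F\to 0$, invoking Proposition \ref{prop:MooreSummary}(g) for the compact group $W_\ef^1$ to kill $\uH^1(W_\ef^1,\Lie(\dT))$, and then computes $\uH^1(V_F,\Lie(\dT)^{\Gamma_\ef})$ separately in the $\bbr$-case (where $V_F$ acts trivially, giving $\Hom_c(V_F,\Lie(\dT)^{\ga})$) and in the $\bbz$-case (where one identifies coinvariants with invariants for the semisimple finite-order action). You instead apply inflation--restriction to $0\to C_E\to\we\to\ga\to 0$, using Proposition \ref{prop:MooreSummary}(g) for the \emph{finite} group $\ga$ to kill both $\uH^1(\ga,V)$ and $\uH^2(\ga,V)$ at once, reducing to $\Hom_c(C_E,V)^{\ga}$. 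Your route has the advantage of treating the $\bbr$- and $\bbz$-cases uniformly and avoiding the coinvariants-equals-invariants step; the price is that you must track the $\ga$-action on $\Hom_c(C_E,V)$ carefully. You handle this correctly: the crucial observation is not that the splitting $C_E\approx C_E^1\times V_E$ is $\ga$-equivariant (it generally is not, as Remark \ref{rem:GalSplitting} warns), but that every continuous homomorphism $C_E\to V$ kills the compact $C_E^1$ and hence factors through $C_E/C_E^1\cong V_E$, on which $\ga$ acts trivially because $|\cdot|_E$ is $\ga$-invariant. One small stylistic point: once you make that observation, the appeal to Proposition \ref{p:CompOpen}(f) and the explicit product splitting is actually redundant --- the factorization through $C_E/C_E^1$ already gives $\Hom_c(C_E,V)=\Hom_c(V_E,V)$ equivariantly, which is the cleaner formulation. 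The topological upgrade at the end (continuous bijection of Polish groups, hence homeomorphism) is also fine; alternatively, since you are comparing a Polish group with a finite-dimensional real vector space via a continuous bijective homomorphism, the paper's preferred reference \cite[Theorem (5.29)]{HR} applies just as in Theorem \ref{thm:Rcase}.
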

\begin{proof}
Consider the exact sequence $0\to W_\ef^1 \to \we \to V_F \to 0$. As $W_\ef^1$ is compact and $\Lie(\dT)$ is a finite-dimensional real vector space, we obtain from Proposition \ref{prop:MooreSummary}(g) that $\uH^1(W_\ef^1,\Lie(\dT))=0$. 
Recall that the action of $W_{E/F}$ on $\Lie(\dT)$ factors 
through a $\bbc$-linear action of the finite group $\Gamma_{E/F}$.
In the $\bbr$-cases, the canonical action of 
$V_F$ on $\Lie(\dT)^{W_{E/F}^1}=\Lie(\dT)^{\Gamma_\ef}$ is trivial, so from the inflation-restriction exact sequence, 
we obtain a continuous isomorphism of abelian Lie groups $\Hom_c(V_F,\Lie(\dT)^{\Gamma_\ef})\cong\uH^1(\we,\Lie(\dT))$. This map is explicitly defined (on the level of cocycles) by
\[
\varphi \mapsto \zeta_\varphi:\omega \mapsto \log(|\omega|_\ef)\tfrac{d}{dt}\varphi\big|_{t=0},
\] 
for all $\varphi\in \Hom_c(V_F,\Lie(\dT)^{\Gamma_\ef})$ and $\omega\in\we$ 
(see also Proposition \ref{prop:ExplicitCocycle0}).
Both sides are complex vector spaces with their complex structures inherited by that of $\dT$, so the map thus defined is $\bbc$-linear.
In the $\bbz$-cases, given that we have with a 
semisimple $\bbc$-linear action of $V_F$ on a finite-dimensional vector space, $\uH^1(V_F,\Lie(\dT)^{W^1_\ef})$ is naturally homeomorphic to the $V_F$-coinvariants in $\Lie(\dT)^{W^1_\ef}$, hence to the space of invariants $\Lie(\dT)^{\Gamma_\ef}$.
\end{proof}

\subsection{\texorpdfstring{The identity component of $H^1_c(W_{E/F},\dT)$}{The identity component of H1c(W{E/F},T)}}
In this subsection, we apply our knowledge of the low-dimensional cohomology groups $\uH^r(W_\ef, A)$ with $A= X, A= \Lie(\dT)$ or $A= \dT$ to some natural constructions in homological algebra. In particular, we use the topological features of Moore's measurable cohomology to extract information about the structure and connected components of the resulting cohomology groups.

Since $X = \Hom(T_E,\bbg_m)$ is a free abelian group, applying $\Hom(\dX,-)$ to the injective resolution $0\to\bbz\to\bbc\to\bbc^*\to 0$ of the trivial module yields the short exact sequence
\begin{equation}\label{eq:FFSES}
0 \to X \to \Lie(\dT)\to\dT\to 0.
\end{equation}
Using Proposition \ref{prop:MooreSummary}(d),
we get the long exact sequence 
\begin{multline}\label{eq:LES}
    0\to X^\ga\to\Lie(\dT)^\ga\to\dT^\ga\to\\
    \to \uH^1(W_\ef,X)\to\uH^1(W_\ef,\Lie(\dT))\to\uH^1(W_\ef,\dT)\to \uH^2(W_\ef,X),
\end{multline}
where all the arrows are continuous. The first conclusion that we draw from this is the following.
\begin{prop}\label{prop:ConCompOp}
    If $F$ is a local or global field, then  $H_c^1(W_\ef,\dT)^\circ$ is an open subgroup of $H_c^1(W_\ef,\dT)$.
\end{prop}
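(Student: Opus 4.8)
The plan is to read off the identity component directly from the long exact sequence \eqref{eq:LES}, exploiting the fact that its relevant outer term is discrete. Write $\alpha:\uH^1(W_\ef,\Lie(\dT))\to\uH^1(W_\ef,\dT)$ and $\beta:\uH^1(W_\ef,\dT)\to\uH^2(W_\ef,X)$ for the two maps in \eqref{eq:LES} adjacent to the middle term; these are continuous homomorphisms of abelian groups by Proposition \ref{prop:MooreSummary}(d),(e), and by Proposition \ref{prop:MooreSummary}(c) we may and do identify $\uH^1(W_\ef,\dT)=H_c^1(W_\ef,\dT)$ as topological groups.

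First I would observe that $\uH^2(W_\ef,X)$ is discrete by Proposition \ref{prop:DiscreteIsDiscrete}, so $\{0\}$ is open in it, and hence $\ker\beta=\beta^{-1}(\{0\})$ is an open subgroup of $H_c^1(W_\ef,\dT)$. By exactness of \eqref{eq:LES} we have $\ker\beta=\im\alpha$, so $\im\alpha$ is an open subgroup. On the other hand, Proposition \ref{prop:H1middle} identifies the source $\uH^1(W_\ef,\Lie(\dT))$ with the finite-dimensional complex vector space $\Lie(\dT)^\ga$, which is connected; therefore $\im\alpha$, being the continuous image of a connected topological group under a group homomorphism, is a connected subgroup of $H_c^1(W_\ef,\dT)$ containing the identity.

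It then remains to invoke the elementary fact that an open connected subgroup of a topological group coincides with its identity component. Indeed, an open subgroup is automatically closed (its complement is a union of cosets, each open), so $H_c^1(W_\ef,\dT)^\circ\cap\im\alpha$ is clopen and nonempty inside the connected group $H_c^1(W_\ef,\dT)^\circ$, forcing $H_c^1(W_\ef,\dT)^\circ\subseteq\im\alpha$; conversely $\im\alpha$ is connected and contains the identity, so $\im\alpha\subseteq H_c^1(W_\ef,\dT)^\circ$. Hence $H_c^1(W_\ef,\dT)^\circ=\im\alpha$, which is open. I do not anticipate a genuine obstacle here: the proof only combines the continuity of the maps in \eqref{eq:LES}, the discreteness of $\uH^2(W_\ef,X)$, and the vector-space description of $\uH^1(W_\ef,\Lie(\dT))$. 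The one point deserving a little care is the compatibility of topologies between Moore's measurable $\uH^1$ (with its quotient topology) and $H_c^1$ (with the compact-open topology), which is precisely the content of Proposition \ref{prop:MooreSummary}(c) and must be cited when one identifies $\ker\beta$ with a subset of $H_c^1(W_\ef,\dT)$.
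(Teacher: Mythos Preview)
Your proof is correct and follows essentially the same approach as the paper: both use the continuity of the map $\beta$ to $\uH^2(W_\ef,X)$ together with the discreteness of the target (Proposition \ref{prop:DiscreteIsDiscrete}) to conclude that $\ker\beta$ is open, and identify this kernel with the identity component. The paper's proof is extremely terse and simply asserts that $\uH^1(W_\ef,\dT)^\circ=\beta^{-1}(\{0\})$, whereas you spell out why this kernel is connected (as $\im\alpha$, the continuous image of the vector space $\Lie(\dT)^{\Gamma_\ef}$ from Proposition \ref{prop:H1middle}) and why an open connected subgroup must coincide with the identity component; these are exactly the details implicit in the paper's one-line argument.
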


\begin{proof}
    We use Propositions \ref{prop:MooreSummary}(c) and that $\uH^1(W_\ef, \dT)^\circ$ is the inverse image of the open set $\{0\}\subset\uH^2(W_\ef,X),$ from  Proposition \ref{prop:DiscreteIsDiscrete}.
\end{proof}

\begin{thm}\label{thm:Rcase}
Suppose that $V_F = \bbr_+$. Then $H_c^1(W_\ef,\dT)^\circ \approx \Lie(\dT^{\ga})$. In particular, $H_c^1(W_\ef,\dT)$ is an abelian Lie group.
\end{thm}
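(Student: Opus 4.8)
The plan is to exploit the long exact sequence \eqref{eq:LES} together with the structural results already established. Since $V_F = \bbr_+$, we are in the $\bbr$-case, so by Proposition \ref{prop:DiscreteIsDiscrete} the group $\uH^2(W_\ef,X)$ is discrete; hence, as observed in Proposition \ref{prop:ConCompOp}, the identity component $H_c^1(W_\ef,\dT)^\circ$ is exactly the kernel of the continuous map $\uH^1(W_\ef,\dT)\to\uH^2(W_\ef,X)$, which by exactness equals the image of $\uH^1(W_\ef,\Lie(\dT))\to\uH^1(W_\ef,\dT)$. By Proposition \ref{prop:MooreSummary}(d) this image map is continuous and open onto its image, so $H_c^1(W_\ef,\dT)^\circ$ is a continuous open quotient of $\uH^1(W_\ef,\Lie(\dT))$.

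Next I would identify that quotient. By exactness, the kernel of $\uH^1(W_\ef,\Lie(\dT))\to\uH^1(W_\ef,\dT)$ is the image of $\uH^1(W_\ef,X)\to\uH^1(W_\ef,\Lie(\dT))$. Since $V_F\cong\bbr$, Proposition \ref{prop:DiscreteIsDiscrete} tells us $\uH^1(W_\ef,X)$ is countable and discrete; moreover (cf. the remark after \eqref{eq:LES} that $H^1(\Gamma_{\ef},X)$ is torsion, and the inflation-restriction computation in the proof of Proposition \ref{prop:DiscreteIsDiscrete}) $\uH^1(W_\ef,X)$ maps into the $\bbc$-vector space $\uH^1(W_\ef,\Lie(\dT))\cong\Lie(\dT)^\ga$ of Proposition \ref{prop:H1middle}; a homomorphism from a torsion-or-finitely-generated discrete group factoring through such a map has image a subgroup of a torsion-free vector space, but the relevant image comes from the connecting map $\dT^\ga\to\uH^1(W_\ef,X)$ being the obstruction — in the $\bbr$-case $\dT^\ga$ is connected and divisible, and the tail $X^\ga\to\Lie(\dT)^\ga\to\dT^\ga$ is already surjective onto $(\dT^\ga)^\circ=\dT^\ga$, so the connecting map $\dT^\ga\to\uH^1(W_\ef,X)$ is zero, forcing $\uH^1(W_\ef,X)\to\uH^1(W_\ef,\Lie(\dT))$ to be injective with discrete (hence, being a subgroup of a finite-dimensional $\bbc$-vector space that is also the continuous image of a discrete group) image a discrete subgroup. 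Therefore
\[
H_c^1(W_\ef,\dT)^\circ \;\approx\; \uH^1(W_\ef,\Lie(\dT))\big/\,\bigl(\text{discrete subgroup}\bigr).
\]
But actually I claim the discrete subgroup here is $\{0\}$: in the $\bbr$-case one checks directly that $\uH^1(W_\ef,X)$ injects into $\uH^1(W_\ef,\dT)$ already — equivalently that $\Lie(\dT)^\ga\to\dT^\ga$ is surjective, which holds since $\dT^\ga$ is connected (its component group injects into the torsion group $H^1(\Gamma_{\ef},X)$, but here we only need surjectivity onto the identity component, and $\Lie(\dT)^\ga = \Lie(\dT^\ga)$ surjects onto $(\dT^\ga)^\circ$). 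Hence $\uH^1(W_\ef,X)\to\uH^1(W_\ef,\Lie(\dT))$ is the zero map, so $\uH^1(W_\ef,\Lie(\dT))\to\uH^1(W_\ef,\dT)$ is injective, and combined with the first paragraph it is a continuous open injection onto $H_c^1(W_\ef,\dT)^\circ$, i.e. a topological isomorphism. Applying Proposition \ref{prop:H1middle}, which gives a $\bbc$-linear homeomorphism $\uH^1(\we,\Lie(\dT))\approx\Lie(\dT)^\ga = \Lie(\dT^\ga)$, we conclude $H_c^1(W_\ef,\dT)^\circ\approx\Lie(\dT^\ga)$.

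Finally, for the "in particular" clause: $H_c^1(W_\ef,\dT)^\circ$ is an open subgroup (Proposition \ref{prop:ConCompOp}) which we have just shown is a finite-dimensional complex vector space, hence a complex Lie group; since it is open in $H_c^1(W_\ef,\dT)$, the ambient group inherits a compatible Lie group structure, and it is abelian because $\dT$ is. I expect the main obstacle to be the bookkeeping around the connecting map $\dT^\ga\to\uH^1(W_\ef,X)$ and verifying that $\Lie(\dT)^\ga\twoheadrightarrow(\dT^\ga)^\circ$ — i.e. controlling precisely which part of the long exact sequence \eqref{eq:LES} is exact at which term with the correct topologies — together with making sure the open-mapping properties from Proposition \ref{prop:MooreSummary}(d) genuinely upgrade a continuous bijection to a homeomorphism (which they do, but the argument needs the discreteness of $\uH^2(W_\ef,X)$ invoked at the right moment).
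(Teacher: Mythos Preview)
Your overall strategy---using the long exact sequence \eqref{eq:LES} and showing that $\uH^1(W_\ef,X)\to\uH^1(W_\ef,\Lie(\dT))$ is the zero map---is viable and is in fact the route the paper takes in the $\bbz$-case (cf.~\eqref{eq:FFSES2}). But your justification of that vanishing contains a genuine error: you assert that $\dT^{\Gamma_\ef}$ is connected, and hence that the connecting map $\dT^{\Gamma_\ef}\to\uH^1(W_\ef,X)$ is zero. This is false. Take $X=\bbz$ with $\Gamma_\ef=\bbz/2$ acting by $x\mapsto -x$; then $\dT^{\Gamma_\ef}=\{\pm 1\}$ is disconnected and the connecting map to $H^1(\Gamma_\ef,X)\cong\bbz/2$ is an isomorphism. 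Even when the connecting map is nonzero, though, the map $\uH^1(W_\ef,X)\to\uH^1(W_\ef,\Lie(\dT))$ is still zero---but for a reason you gesture at and then abandon: in the $\bbr$-case one has $\Hom_c(C_E,X)=0$ (since $C_E$ is a product of a connected group and a compact group, and $X$ is discrete torsion-free), so inflation--restriction gives $\uH^1(W_\ef,X)\cong H^1(\Gamma_\ef,X)$, which is finite torsion; its image in the $\bbc$-vector space $\Lie(\dT)^{\Gamma_\ef}$ must be zero. That is the argument you should have completed instead of detouring through connectedness of $\dT^{\Gamma_\ef}$.

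A second, smaller gap: Proposition~\ref{prop:MooreSummary}(d) only asserts continuity of the maps in the long exact sequence, not openness. To upgrade the resulting continuous bijection $\Lie(\dT)^{\Gamma_\ef}\to H_c^1(W_\ef,\dT)^\circ$ to a homeomorphism you need the open mapping theorem for Polish (or locally compact) groups, with the Hausdorffness of the target coming from Theorem~\ref{thm:Hausdorff}. The paper's own proof takes a different path: it uses inflation--restriction for $0\to W_\ef^1\to W_\ef\to V_F\to 0$, identifies $\uH^1(V_F,\dT^{\Gamma_\ef})\approx\Lie(\dT^{\Gamma_\ef})$ directly, and shows $\uH^1(W_\ef^1,\dT)$ is discrete via compactness of $W_\ef^1$. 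Your route through \eqref{eq:LES} is arguably more direct once Proposition~\ref{prop:H1middle} is in hand, but it requires the torsion argument above, not the connectedness claim.
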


\begin{proof}
     Consider the short exact sequence $0\to W^1_\ef\to W_\ef \to V_F\to 0$, where the last arrow is the absolute value map, as discussed in Section \ref{sec:absolutevals}. From the inflation-restriction sequence we obtain the exact sequence
    \[
0\to \uH^1(V_F,\dT^{W^1_\ef})\to\uH^1(W_\ef,\dT)\to\uH^1(W^1_\ef,\dT),
    \]
    in which all the arrows are continuous.
    Since $V_F=\bbr_+$ acts trivially on $\dT$, using Proposition \ref{prop:MooreSummary}(c), we have $\bbc$-linear identifications \[\uH^1(V_F,\dT^{W^1_\ef})\approx \Hom_c(V_F,\dT^{\ga}) \approx \Lie(\dT^{\Gamma_\ef}).\] We now claim that $\uH^1(W^1_\ef,\dT)$ is discrete, as a topological space. Together with the continuity of $\uH^1(W_\ef,\dT)\to\uH^1(W^1_\ef,\dT)$ and Proposition \ref{prop:MooreSummary}(c), the claim implies that we have a continuous bijective homomorphism 
    \[
    \Lie(\dT)^\ga\to H_c^1(W_\ef,\dT)^\circ
    \]
    between locally compact Hausdorff groups (Theorem \ref{thm:Hausdorff}) with domain a finite-dimensional real vector space. Hence it is a homeomorphism, by the open mapping theorem for topological groups (see \cite[Theorem (5.29)]{HR}).
    
    As for the claim, since $W^1_\ef$ is compact, it follows from Proposition \ref{prop:MooreSummary}(g) that $\uH^p(W^1_\ef,\Lie(\dT))=0$ for $p=1,2$. From the long exact sequence of $W^1_\ef$-cohomology applied to 
    $0 \to X \to \Lie(\dT) \to \dT \to 0,$
    we thus obtain a continuous isomorphism of abelian groups
    \[
    \uH^1(W^1_\ef,\dT) \cong \uH^2(W^1_\ef,X).
    \] But from Proposition \ref{prop:MooreSummary}(f), it follows that $\uH^2(W^1_\ef,X)$ is countable and discrete as a topological space and, hence, so is $\uH^1(W^1_\ef,\dT)$, as claimed. 

    The last assertion follows from the above and Proposition \ref{prop:ConCompOp}.
    \end{proof}

We now turn our attention to the cases when $V_F\cong \bbz$. In these cases, the Weil group $W_\ef$ is not almost connected. We start by noting that the long exact sequence \eqref{eq:LES} yields the exact sequence
\begin{equation}\label{eq:FFSES2}
0\to A \to \uH^1(W_\ef,\Lie(\dT))\to\uH^1(W_\ef,\dT)^\circ\to 0,
\end{equation}
where $A\subseteq  \Lie(\dT)^\ga$ is the image of the continuous homomorphism $\uH^1(W_\ef, X)\to\uH^1(W_\ef,\Lie(\dT)) \approx\Lie(\dT)^\ga$. By Proposition \ref{prop:DiscreteIsDiscrete} we 
conclude that $A$ is countable, and since $A$ is also the kernel of the continuous map 
 $\Lie(\dT)^\ga\to\uH^1(W_\ef,\dT)^\circ$ 
we also see that $A$ is closed. A closed, countable subgroup of a finite-dimensional real or complex vector space is necessarily a discrete subgroup. 
Our next tasks will be to endow the space $\uH^1(W_\ef,\dT)^\circ$ with a canonical complex structure and to study the discrete group $A$. 
\begin{defn}\label{def:CanComplexStruc}
    For $F$ local or global,
    we endow the abelian Lie group $\uH^1(W_\ef,\dT)$ with a canonical structure of abelian complex Lie group via the sequence
    \begin{equation}\label{eq:CanCplxStructure}
    0 \to K \to \uH^1(W_\ef,\Lie(\dT)) \to \uH^1(W_\ef,\dT)^\circ \to 0,
    \end{equation}
    where $K$ is the kernel of the functorial homomorphism $\uH^1(W_\ef,\Lie(\dT)) \to \uH^1(W_\ef,\dT)$. We transport this complex structure to $H^1_c(W_\ef,\dT)$ via the isomorphism of Proposition \ref{prop:MooreSummary}(c).
\end{defn}
To justify Definition \ref{def:CanComplexStruc}, first of all, note that $\uH^1(W_\ef,\dT)$ is indeed an abelian Lie group, from Theorem \ref{thm:Rcase} and \eqref{eq:FFSES2}. Secondly, the middle term $\uH^1(W_\ef,\Lie(\dT))$ has a canonical structure of a complex vector space since $\Lie(\dT)$ is a complex vector space. Further, in the $\bbr$-case, the kernel $K=0$ by Proposition \ref{prop:H1middle} and Theorem \ref{thm:Rcase} and the fact that any continuous homomorphism between real vector spaces is necessarily  linear. In the $\bbz$-case, $K=A$, the discrete space appearing in \eqref{eq:FFSES2}. Hence, \eqref{eq:CanCplxStructure} endows $\uH^1(W_\ef,\dT)^\circ$,  and thus $\uH^1(W_\ef,\dT)$, with a canonical complex Lie group structure.

\begin{prop}
    The topological isomorphism $\Lie(\dT^{\ga})\approx H_c^1(W_\ef,\dT)^\circ$ of Theorem \ref{thm:Rcase} is a $\bbc$-linear isomorphism.
\end{prop}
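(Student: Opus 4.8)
The plan is to exhibit the isomorphism $\Lie(\dT^{\ga})\approx H_c^1(W_\ef,\dT)^\circ$ of Theorem \ref{thm:Rcase} as the composite of the $\bbc$-linear isomorphism $\Lie(\dT)^\ga\approx\uH^1(\we,\Lie(\dT))$ of Proposition \ref{prop:H1middle} with the natural homomorphism $j\colon\uH^1(\we,\Lie(\dT))\to\uH^1(\we,\dT)^\circ$ induced by $\exp\colon\Lie(\dT)\to\dT$. Since $V_F=\bbr_+$, the kernel $K$ of $j$ appearing in Definition \ref{def:CanComplexStruc} is $0$, so $j$ is an isomorphism onto $\uH^1(\we,\dT)^\circ$, and it is an isomorphism of complex vector spaces by the very construction of the complex structure on $\uH^1(\we,\dT)^\circ$ (transport along $j$). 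Granting that this composite agrees with the map of Theorem \ref{thm:Rcase}, the asserted $\bbc$-linearity follows at once, as a composite of two complex-linear isomorphisms.

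To prove the agreement, I would use that both isomorphisms arise by inflation along the absolute-value homomorphism $\we\to V_F$ (with kernel $W^1_\ef$) from the corresponding statements for the trivial $V_F$-modules $\Lie(\dT)^{W^1_\ef}=\Lie(\dT)^\ga$ and $\dT^{W^1_\ef}=\dT^\ga$. Functoriality of inflation with respect to the coefficient morphism $\exp\colon\Lie(\dT)^\ga\to\dT^\ga$ yields a commutative square
\[
\begin{tikzcd}
\uH^1(V_F,\Lie(\dT)^\ga) \arrow[r,"\Inf"] \arrow[d,"\exp_*"'] & \uH^1(\we,\Lie(\dT)) \arrow[d,"j"] \\
\uH^1(V_F,\dT^\ga) \arrow[r,"\Inf"'] & \uH^1(\we,\dT).
\end{tikzcd}
\]
For any complex Lie group $L$ with Lie algebra $\mathfrak{l}$, every continuous homomorphism $\bbr_+\to L$ is smooth, and $\uH^1(V_F,L)=\Hom_c(\bbr_+,L)\cong\mathfrak{l}$ via $\psi\mapsto\tfrac{d}{dt}\psi(e^{t})\big|_{t=0}$. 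Applying this with $L=\Lie(\dT)^\ga$ and with $L=(\dT^\ga)^\circ$ (both having Lie algebra $\Lie(\dT)^\ga$) recovers precisely the identifications used in Proposition \ref{prop:H1middle} and in Theorem \ref{thm:Rcase}, respectively; and under them the left vertical arrow $\exp_*\colon\psi\mapsto\exp\circ\psi$ becomes $d(\exp)_1=\id_{\Lie(\dT)^\ga}$, since $v\mapsto\tfrac{d}{dt}\exp(tv)\big|_{t=0}=v$. Hence the two composites $\Lie(\dT)^\ga\to\uH^1(\we,\dT)^\circ$ coincide, as required.

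The only point demanding care is this last identification of $\exp_*$: one must keep straight the two a priori different normalizations identifying $\Hom_c(V_F,-)$ with $\Lie(\dT)^\ga$ — the Lie-algebra-valued one and the torus-valued one — and confirm that the coefficient map $\exp$ interchanges them with no extra twist. Everything else is formal: commutativity of the square is functoriality of the inflation map (hence continuous and structure-preserving by Proposition \ref{prop:MooreSummary}(e)), and the conclusion is then just the composition of complex-linear isomorphisms.
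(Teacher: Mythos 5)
Your proof takes essentially the same approach as the paper. The paper's argument (see equation \eqref{eq:RcaseDiagram}) also builds the commutative square from inflation along $\we\to V_F$ and the coefficient map $\exp$, then concludes $\bbc$-linearity by noting that the top row and the bottom-left identification are $\bbc$-linear while the right vertical arrow (induced by $\exp$) is exactly what defines the complex structure via Definition~\ref{def:CanComplexStruc}; your additional discussion of why $\exp_*$ interchanges the two normalizations of $\Hom_c(V_F,-)\cong\Lie(\dT)^{\ga}$ is a reasonable expansion of a point the paper passes over silently.
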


\begin{proof}
Recall that $V_F\cong \bbr$. We have the following commutative diagram
    \begin{equation}\label{eq:RcaseDiagram}
\begin{tikzcd}
\Lie(\dT)^\ga \arrow[r,"\approx"]\arrow[d,"\approx"]& H^1_c(V_F, \Lie(\dT)^\ga) \arrow[r,"\approx","\textup{Inf}"'] \arrow[d,"\approx"] & H^1_c(W_{E/F}, \Lie(\dT))\arrow[d,twoheadrightarrow]  \\
\Lie(\dT^\ga) \arrow[r,"\approx"]&H^1_c(V_F, \dT^\ga) \arrow[r,twoheadrightarrow,,"\textup{Inf}"] & H^1_c(W_{E/F}, \dT)^\circ
\end{tikzcd}
\end{equation}
where the top row is discussed in (the proof of) Proposition \ref{prop:H1middle}, the bottom row is discussed in (the proof of) Theorem \ref{thm:Rcase} and the last two vertical arrows are the functorial homomorphisms associated to the exponential map. Since the bottom left arrow, the leftmost vertical arrow and the arrows 
in the top row are all $\bbc$-linear isomorphism, whereas the rightmost vertical arrow defines the complex structure on $H^1_c(W_{E/F}, \dT)^\circ$ (by Definition \ref{def:CanComplexStruc}), the result follows.
\end{proof}

\begin{prop}\label{prop:ALat}
If $V_F\cong \bbz$, let $A\subset\textup{Lie}(\widehat{T})^{\Gamma_{E/F}}$ be as in \eqref{eq:FFSES2}. Then,
we have a natural isomorphism $A\cong \Hom_c(\Hom_\ga(X,C_E),\bbz) = \Hom_c(T_{C_F},\bbz)$ and $A$ is a lattice in the real form $\fa_0^* = \bbr\otimes X^\ga$ of $\Lie(\dT)^\ga$.
\end{prop}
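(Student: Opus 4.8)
The plan is to identify the subgroup $A$ inside $\Lie(\dT)^{\Gamma_\ef}$ via the exponential sequence and a careful bookkeeping of the inflation-restriction spectral sequence relative to $0\to W_\ef^1\to W_\ef\to V_F\to 0$, where now $V_F\cong\bbz$. First I would use Proposition \ref{prop:H1middle} to identify $\uH^1(W_\ef,\Lie(\dT))\approx\Lie(\dT)^\ga$, and recall from \eqref{eq:FFSES2} that $A$ is the image of the connecting-induced map $\uH^1(W_\ef,X)\to\uH^1(W_\ef,\Lie(\dT))$. Applying the long exact sequence of $W_\ef$-cohomology to $0\to X\to\Lie(\dT)\to\dT\to 0$ together with the identification of the $\bbz$-cohomology with coinvariants (as in the proof of Theorem \ref{thm:Rcase}), one identifies $A$ with $(X^{W_\ef^1})_{V_F}=X^{\Gamma_\ef}/(\Fr-1)X^{\Gamma_\ef}$ pushed into $\Lie(\dT)^\ga$, modulo the part killed by the image of $\uH^1(W_\ef^1,X)$; because $W_\ef^1$ acts through the \emph{finite} group and $X$ is a \emph{finitely generated free} module, $\uH^1(W_\ef^1,X)$ is finite, so its contribution is torsion and does not affect the lattice statement after tensoring with $\bbr$. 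The upshot is that $A$ is the image of a finitely generated abelian group of rank $\rk X^{\Gamma_\ef}$ inside $\fa_0^*=\bbr\otimes X^\ga$, and the claim that it is a \emph{lattice} (a discrete subgroup of full rank) was already argued abstractly in the paragraph preceding Definition \ref{def:CanComplexStruc} (closed + countable + inside a finite-dimensional vector space $\Rightarrow$ discrete); what remains is to pin down its rank and to give the stated $\Hom$-description.

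For the $\Hom$-description, the cleanest route is duality. One has $T_{C_F}=\Hom_\ga(X,C_E)$, and in the $\bbz$-case $C_E\cong C_E^1\times V_E$ with $V_E\cong\bbz$ a quotient; applying $\Hom_\ga(X,-)$ and composing with the valuation gives a homomorphism $T_{C_F}\to\Hom(X^{\Gamma_\ef},\bbz)$ whose image is exactly $L_T=\widetilde L_T$ by Proposition \ref{prop:tildeXT}, so $\Hom_c(T_{C_F},\bbz)=\Hom(L_T,\bbz)$, and this is a lattice in $\bbr\otimes X^\ga=\fa_0^*$. So the substance of the $\Hom$-identification is to match the abstractly-defined $A$ (the image of $\uH^1(W_\ef,X)$) with $\Hom(L_T,\bbz)$. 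Here I would exploit that, under Langlands's isomorphism $\Lambda$ of Theorem \ref{thm:LangTori}, the exponential sequence \eqref{eq:FFSES} corresponds (functorially, via the identifications in Theorem \ref{thm:mainThm0} and Proposition \ref{prop:tildeXT}) to the exponential sequence $0\to\Hom(L_T,\bbz)\to\Lie(\dT)^\ga\to\widetilde X_T\to 0$ coming from \eqref{eq:Zcancomplexstruc}; comparing the two and using that $\Lambda$ is an isomorphism of topological groups (so it carries identity components to identity components and kernels of exponential-type surjections to each other) forces $A=\Hom(L_T,\bbz)$ on the nose. The continuity/functoriality needed for this comparison is supplied by Proposition \ref{prop:MooreSummary}(d),(e).

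\textbf{Main obstacle.} The delicate point is the rank count / full-rankness of $A$: a priori the map $\uH^1(W_\ef,X)\to\Lie(\dT)^\ga$ could have small image if the coinvariants $X^{\Gamma_\ef}/(\Fr-1)$ were of lower rank than $X^{\Gamma_\ef}$, or if the correction from $\uH^1(W_\ef^1,X)$ were large. The first worry is handled by observing that $\Fr$ acts on $X^{\Gamma_\ef}$ through a finite quotient (the Galois group of the residue extension, or more precisely $\Fr$ has finite order on $X^{\Gamma_\ef}$ since $\Gamma_\ef$ is finite), so $\Fr-1$ is \emph{nilpotent} on the torsion-free part only after tensoring with $\bbq$ it is either $0$ or invertible on each generalized eigenspace — but here the key is that $\logtc$ restricted to the relevant subgroup is surjective onto a finite-index sublattice of $\Hom(X^\ga,\bbz)$ by Proposition \ref{prop:tildeXT}, which already has full rank, and $A$ is forced to have at least that rank by the exactness of \eqref{eq:FFSES2} together with $\dim_\bbc\uH^1(W_\ef,\dT)^\circ=\dim_\bbc\Lie(\dT)^\ga-\rk A$ and the dimension of $X_T$ computed in Corollary \ref{cor:GlobXT}. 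So the cleanest resolution is precisely the duality-with-$\Lambda$ comparison described above, which sidesteps a direct rank computation; I expect the bulk of the write-up to be checking that the two exponential sequences are identified compatibly by $\Lambda$.
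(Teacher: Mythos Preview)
Your second route---comparing the two exponential sequences through Langlands's isomorphism---is exactly the paper's argument, but you have wrapped it in unnecessary and, in one place, circular machinery. The paper simply observes that Langlands's isomorphism $H^1_c(W_\ef,\Hom(\dX,D))\cong\Hom_c(T_{C_F},D)$ is \emph{natural in the divisible coefficient} $D$ (this is the content of \cite[Theorem~2.1, Remark~2.5]{Bi}, or \cite[p.~245]{La}). Applied to the map $\bbc\to\bbc^*$, this gives a commuting square whose horizontal arrows are (algebraic) isomorphisms; hence the kernel on the cohomology side, which is $A$ by definition, is carried to the kernel on the $\Hom_c$ side, which is $\Hom_c(T_{C_F},\bbz)$. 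No topology of $\Lambda$ is needed, and no identity-component matching is needed, since $\uH^1(W_\ef,\Lie(\dT))$ is already connected. Your appeal to ``$\Lambda$ is an isomorphism of topological groups'' is precisely Theorem~\ref{thm:ApexThm}, which in the paper's logical order is proved \emph{after} Proposition~\ref{prop:ALat} (indeed Corollary~\ref{cor:Amax}, which feeds into Theorem~\ref{thm:ApexThm}, cites Proposition~\ref{prop:ALat}); so as written your argument is circular.

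Two smaller points. First, your inflation--restriction detour through $(X^{W_\ef^1})_{V_F}$ is both unnecessary and imprecise: $X^{W_\ef^1}$ is not $X^{\Gamma_\ef}$ in general (the image of $W_\ef^1$ in $\Gamma_\ef$ depends on ramification), and the remarks about $\Fr-1$ being ``nilpotent on the torsion-free part'' are muddled---since $\Fr$ acts on $X$ through the finite group $\Gamma_\ef$, it has finite order, so over $\bbq$ it is semisimple, and on the genuine invariants it acts trivially. None of this is needed once you use naturality in $D$. Second, be careful with $L_T$ versus $\widetilde L_T$: the group $\Hom_c(T_{C_F},\bbz)$ equals $\Hom(\widetilde L_T,\bbz)$, not $\Hom(L_T,\bbz)$; these coincide in the local case but only agree up to finite index in the global function-field case (this distinction is exactly what drives Corollary~\ref{cor:Amax}).
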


\begin{proof}
When we apply the functor $\Hom_c(T_{C_F},-)$ to the exponential sequence $0\to\bbz\to\bbc\to\bbc^*\to 0$ we obtain 
the exact sequence:
\begin{equation}\label{eq:homc1}
0\to \Hom_c(T_{C_F},\bbz)\to\Hom_c(T_{C_F},\bbc)\to \Hom_c(T_{C_F},\bbc^*).
\end{equation}
It was shown in \cite[Theorem 2.1]{Bi} (or see 
\cite[p. 245]{La} for the two special cases $D=\bbc$ and $D=\bbc^*$ which are relevant here) 
that for any divisible abelian topological group  $D$ with trivial $\Gamma_{E/F}$-action we have a natural isomorphism:  
\begin{equation}\label{eq:homc2}
H^1_c(\we,\Hom(\dX,D)) \xrightarrow{\cong}\Hom_c(\Hom_\ga(X,C_E),D).
\end{equation}
Recall from (\ref{eq:FFSES2}) that $A$ was defined as: 
\begin{equation}
A\cong \textup{Ker}(H^1_c(\we,\Hom(\dX,\bbc))\to H^1_c(\we,\Hom(\dX,\bbc^*))),
\end{equation}
when we use the isomorphism 
$H^1_c(\we,\Hom(\dX,\bbc))=H^1_c(\we,\textup{Lie}(\widehat{T}))\cong\textup{Lie}(\widehat{T})^{\Gamma_{E/F}}$ of Proposition \ref{prop:H1middle} to view $A$ as a lattice 
in the real form $\fa_0^*$ of $\Lie(\widehat{T})^{\Gamma_{E/F}}$. 
Combining this with (\ref{eq:homc1}) and (\ref{eq:homc2}) 
proves the desired result. 
\end{proof}
\begin{cor}\label{cor:Amax}
Suppose that $V_F\cong \bbz$. Then $H_c^1(\we,\dT)^\circ\approx H_c^1(\we,\Lie(\dT))/A$ is a complex algebraic torus, equipped with a canonical surjective homomorphism \[H_c^1(\we,\dT)^\circ\to X_T\] 
with finite kernel (which is an isomorphism in the local case). 
\end{cor}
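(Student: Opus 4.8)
The plan is to read the structure of $H_c^1(\we,\dT)^\circ$ directly off the short exact sequence \eqref{eq:FFSES2} and the lattice description of $A$ in Proposition \ref{prop:ALat}, and then to obtain the map to $X_T$ by comparing $A$ with the lattice $\widehat L_T$ of Corollary \ref{cor:GlobXT}.

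\textbf{The quotient is a complex torus.} By Proposition \ref{prop:H1middle} and Proposition \ref{prop:MooreSummary}(c) we have $H_c^1(\we,\Lie(\dT))\approx\Lie(\dT)^\ga=\fa^*$ as complex vector spaces, so \eqref{eq:FFSES2} identifies $H_c^1(\we,\dT)^\circ\approx\fa^*/A$, and by Definition \ref{def:CanComplexStruc} (the $\bbz$-case, where $K=A$) this is an isomorphism of complex Lie groups. Proposition \ref{prop:ALat} says $A$ is a lattice of full rank in the real form $\fa_0^*=\bbr\otimes X^\ga$ of $\fa^*=\fa_0^*\otimes_\bbr\bbc$. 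Choosing a $\bbz$-basis of $A$, which is then simultaneously a $\bbc$-basis of $\fa^*$, gives $\fa^*/A\cong(\bbc/\bbz)^{r}\cong(\bbc^*)^{r}$ with $r=\dim_\bbc\fa^*$, via the exponential $z\mapsto e^{2\pi i z}$; hence $H_c^1(\we,\dT)^\circ$ is a complex algebraic torus.

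\textbf{Comparing lattices and building the map.} By Proposition \ref{prop:ALat}, $A\cong\Hom_c(T_{C_F},\bbz)$; since $\bbz$ has no nontrivial compact subgroup and $T_{C_F}^1$ is the maximal compact subgroup (Proposition \ref{prop:MaxCompact}), every such homomorphism factors through $T_{C_F}/T_{C_F}^1\approx\widetilde L_T$ (Proposition \ref{prop:tildeXT}), so $A\cong\Hom(\widetilde L_T,\bbz)$ as a full-rank sublattice of $\fa_0^*$. On the other hand Corollary \ref{cor:GlobXT} presents $X_T$ as $\fa^*/\widehat L_T$ with $\widehat L_T=\Hom(L_T,\bbz)$. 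If $F$ is local, then $T_{C_F}=T(F)$, so $\widetilde L_T=L_T$ and $A=\widehat L_T$; if $F$ is global, then $T(\bba_F)/T(F)\subset T_{C_F}$ is closed of finite index (\cite[p.~245]{La}) and $\logtc$ restricts on it to $\log_T$, so $L_T\subseteq\widetilde L_T$ is a finite-index sublattice and therefore $A=\Hom(\widetilde L_T,\bbz)\subseteq\Hom(L_T,\bbz)=\widehat L_T$ is of finite index. The surjection $\fe\colon\fa^*\to X_T$ of Corollary \ref{cor:GlobXT} has kernel $\widehat L_T\supseteq A$, hence descends to a surjective homomorphism of complex algebraic tori $H_c^1(\we,\dT)^\circ=\fa^*/A\to\fa^*/\widehat L_T\cong X_T$ with kernel $\widehat L_T/A$, which is finite by the above and trivial in the local case. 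This map is canonical, being induced by the canonical map $\fe$; one checks that on identity components it agrees with the map induced by Langlands's $\rho\circ\Lambda$ of Theorem \ref{thm:LangTori}.

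\textbf{Main obstacle.} I do not anticipate a genuine difficulty: the only delicate point is the bookkeeping in the lattice comparison, namely verifying that the identification $A\cong\Hom_c(T_{C_F},\bbz)$ of Proposition \ref{prop:ALat} and the presentation $X_T\cong\fa^*/\widehat L_T$ of Corollary \ref{cor:GlobXT} place $A$ and $\widehat L_T$ in the same ambient space $\fa_0^*$ compatibly, and that $\logtc|_{T(\bba_F)/T(F)}=\log_T$ has finite-index image in $\widetilde L_T$. Once the two lattices are lined up, the conclusion is purely formal.
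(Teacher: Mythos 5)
Your proof is correct and follows essentially the same route as the paper: both arguments use Proposition \ref{prop:ALat} to identify $A$ with $\Hom_c(T_{C_F},\bbz)$, then compare this lattice with $\widehat L_T$ via the identification $T_{C_F}=T(F)$ in the local case and the finite-index closed inclusion $T(\bba_F)/T(F)\hookrightarrow T_{C_F}$ in the global case, and finally read the conclusion off the presentation of $X_T$ from Corollary \ref{cor:GlobXT} (equivalently the exact sequence \eqref{eq:XTSES}) together with \eqref{eq:FFSES2}. The only difference is cosmetic: you spell out a bit more explicitly that $A$ is a full-rank lattice making $\fa^*/A$ an algebraic torus, and you route the global comparison through $\widetilde L_T$ rather than restricting homomorphisms directly from $T_{C_F}$ to $T(\bba_F)/T(F)$, but these are the same computation.
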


\begin{proof}
    For both $F$ local non-Archimedean and a global function field, we can characterize $X_T$ via the exact sequence (see Proposition \ref{prop:tildeXT} and Corollary \ref{cor:GlobXT})
\begin{equation}\label{eq:XTSES}
0\to \widehat{L}_T \to \Lie(\dT)^\ga \to X_T \to 0,
\end{equation}
with $L_T$ the image of the respective $\log_T$ maps (see \eqref{eq:LogMap} and the text above Corollary \ref{cor:GlobXT}), and $\widehat{L}_T$ is the lattice $\Hom(L_T,\bbz)$.
Now, when $F$ is non-Archimedean local, we have $T_{C_F}=\Hom_\ga(X,C_E) = T(F)$. From continuity, it follows that
$\Hom_c(T(F),\bbz) = \Hom_c(T(F)/T(F)^1,\bbz)$ and hence $A\cong \widehat{L}_T$, using Proposition \ref{prop:ALat}. From  
(\ref{eq:FFSES2}) and (\ref{eq:XTSES}) we get $H_c^1(W_\ef,\dT)^\circ\approx X_T$. 

For $F$ a function field, we have 
$T(\bba_F)/T(F) \to T_{C_F}$, a closed inclusion of finite index (see \cite[p. 245]{La}) from which we obtain (by Proposition \ref{prop:ALat}) an inclusion 
$A \hookrightarrow \Hom_c(T(\bba_F)/T(F),\bbz) = \Hom(T(\bba_F)/T(\bba_F)^1,\bbz) \cong \widehat{L}_T$
with finite cokernel. 
By (\ref{eq:XTSES}) this implies the result. 
\end{proof}

\section{Continuity of Langlands's map}\label{sec:LangCont}

In this section, we shall finish the proof of our main result, Theorem \ref{thm:mainThm}. Recall the notation
$T_{C_F} := \Hom_{\Gamma_\ef}(X,C_E)$ of \eqref{eq:TCF}. From the exponential sequence $0\to\bbz\to\bbc\to\bbc^*\to 0$, 
on the one hand, applying the functor $\Hom_c(T_{C_F},-)$  we obtain the exact sequence
\begin{equation}\label{eq:TCFES}
 0 \to \Hom_c(T_{C_F},\bbz) \to \Hom_c(T_{C_F},\bbc) \to \Hom_c(T_{C_F},\bbc^*)   
\end{equation}
while on the other hand we get the sequences \eqref{eq:FFSES} and \eqref{eq:LES}. From \cite{La} and \cite[Proposition 2.15]{Bi}, there are natural maps between these sequences as follows: 
\begin{confidential}
    these sequences are connected at the level of continuous homomorphisms and continuous cocycles via the commuting diagram
\end{confidential}
\begin{equation}\label{eq:TCFcommdiagram}
    \begin{tikzcd}
        \Hom_c(T_{C_F},\bbc) \arrow[r]         & \Hom_c(T_{C_F},\bbc^*)\\
        H^1_c(W_\ef,\Lie(\dT)) \arrow[r]\arrow[u]& H^1_c(W_\ef,\dT)\arrow[u,"\Lambda"]
    \end{tikzcd}
\end{equation}
where the vertical arrows are isomorphisms (see \cite[Theorem 2.1]{Bi}) and $\Lambda$ is Langlands's canonical map. The commutativity of the diagram \eqref{eq:TCFcommdiagram}   can be checked by unravelling the identifications in Langlands's original results, as explained in \cite[Section 2]{Bi} (see, in particular, \cite[Proposition 2.4]{Bi} and \cite[Remark 2.5]{Bi} where it is evident that an arrow $D\to D'$ gives rise to a commutative diagram).

With the compact-open topologies, Moore's results (Proposition \ref{prop:MooreSummary}) imply that the horizontal arrows are continuous. As a consequence of Propositions \ref{prop:charTCFconcomp} and \ref{prop:ConCompOp}, which show that the identity components of the domain and codomain of $\Lambda$ are open, it suffices to study the continuity of $\Lambda$ when restricted to those components. Recall from Proposition \ref{prop:H1middle} that we have 
isomorphisms
\[
H^1_c(W_\ef,\Lie(\dT))\approx \Hom_c(V_F,\Lie(\dT)^{\Gamma_\ef}) \approx\Lie(\dT)^{\Gamma_\ef}.
\]
There is a similar characterization of $\Hom_c(T_{C_F},\bbc)$:
\begin{prop}\label{prop:Homcmiddle}
    For $F$ local or global,
    we have 
    $\bbc$-linear isomorphisms
    \[
    \Hom_c(T_{C_F},\bbc) \approx 
    \Lie(\dT)^{\Gamma_\ef}.
    \]
\end{prop}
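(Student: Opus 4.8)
The plan is to compute $\Hom_c(T_{C_F},\bbc)$ directly, paralleling the structure of the proof of Proposition \ref{prop:H1middle}. Recall $T_{C_F}=\Hom_{\Gamma_\ef}(X,C_E)$ and that, by Proposition \ref{prop:MaxCompact}, $T_{C_F}^1=\Hom_{\Gamma_\ef}(X,C_E^1)$ is the maximal compact subgroup of $T_{C_F}$, with $T_{C_F}/T_{C_F}^1\approx \widetilde L_T$ a real vector space (in the $\bbr$-case) or a finite-index sublattice of $\Hom(X^\ga,\bbz)$ (in the $\bbz$-case), by Proposition \ref{prop:tildeXT}. Since $\bbc$ is torsion-free and has no nontrivial compact subgroups, any continuous homomorphism $T_{C_F}\to\bbc$ must kill $T_{C_F}^1$; hence $\Hom_c(T_{C_F},\bbc)\approx \Hom_c(T_{C_F}/T_{C_F}^1,\bbc)\approx\Hom_c(\widetilde L_T,\bbc)$, and this is a homeomorphism for the compact-open topologies because $T_{C_F}^1$ is compact (so the restriction-to-the-quotient map is proper on the relevant function spaces, exactly as in the proof of Proposition \ref{prop:tildeXT}).

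Next I would identify $\Hom_c(\widetilde L_T,\bbc)$ with $\Lie(\dT)^\ga$ using the tensor–hom adjunction, just as in \eqref{eq:Rcancomplexstruc} and \eqref{eq:Zcancomplexstruc}. In the $\bbr$-case, $\widetilde L_T=\fa_0=\bbr\otimes\Hom(X^\ga,\bbz)$, so
\[
\Hom_c(\widetilde L_T,\bbc)\cong\Hom_c(\bbr,\bbc\otimes\Hom(X^\ga,\bbz))\cong \bbc\otimes\Hom(X^\ga,\bbz),
\]
using that a continuous homomorphism between finite-dimensional real vector spaces is $\bbr$-linear. In the $\bbz$-case, $\widetilde L_T$ is a lattice, so $\Hom_c(\widetilde L_T,\bbc)=\Hom(\widetilde L_T,\bbc)\cong\bbc\otimes\Hom(\widetilde L_T,\bbz)\cong\bbc\otimes\Hom(X^\ga,\bbz)$, the last step because $\widetilde L_T\subset\Hom(X^\ga,\bbz)$ has finite index and $\bbc$ is a $\bbq$-vector space. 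Finally, $\bbc\otimes\Hom(X^\ga,\bbz)=\Lie(\dT)^\ga$: indeed $\Lie(\dT)=\bbc\otimes\dX=\bbc\otimes\Hom(X,\bbz)$, and since $\ga$ is finite and we work over a field of characteristic zero, taking $\ga$-invariants commutes with the (flat) base change $-\otimes_\bbz\bbc$, giving $\Lie(\dT)^\ga=\bbc\otimes\Hom(X,\bbz)^\ga=\bbc\otimes\Hom(X^\ga,\bbz)$ up to the finite-index issue already absorbed above. The complex structure on the left is the one induced from $\bbc$, which matches the complex structure on $\Lie(\dT)^\ga$ coming from $\dT$, so the isomorphism is $\bbc$-linear.

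The main obstacle is bookkeeping with the two topologies and the finite-index lattice inclusions: one must make sure the algebraic isomorphisms produced by tensor–hom are actually homeomorphisms for the compact-open topologies, and that the passage $\Hom(X^\ga,\bbz)^\ga \leftrightarrow \widetilde L_T$ (a finite-index containment, not an equality, in the $\bbz$-case) does not corrupt the identification after tensoring with $\bbc$. Both points are handled exactly as in Proposition \ref{prop:tildeXT} — properness of $\logtc$ for the topology, and divisibility/torsion-freeness of $\bbc$ for the lattice issue — so no genuinely new input is needed; the proof is essentially a transcription of that argument with $\bbc^*$ replaced by $\bbc$ and "complex algebraic torus" replaced by "complex vector space."
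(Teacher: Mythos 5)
Your proof is correct and follows essentially the same route as the paper's (much terser) proof: kill the maximal compact subgroup $T_{C_F}^1$ using Proposition~\ref{prop:MaxCompact} and the absence of nontrivial compact subgroups in $\bbc$, then apply Proposition~\ref{prop:tildeXT} together with the tensor--hom argument already present in \eqref{eq:Rcancomplexstruc}--\eqref{eq:Zcancomplexstruc} to identify $\Hom_c(\widetilde L_T,\bbc)$ with $\Lie(\dT)^\ga$. The extra bookkeeping you supply (finite-index lattice inclusions becoming isomorphisms after $-\otimes_\bbz\bbc$, compatibility of the compact-open topologies) is exactly what the paper tacitly delegates to Proposition~\ref{prop:tildeXT}.
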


\begin{proof}
    First,  note that as $T_{C_F}^1=\Hom_{\Gamma_\ef}(X,C_E^1)$ is the maximal compact subgroup of $T_{C_F}$ (see Proposition \ref{prop:MaxCompact}), the continuity implies a natural identification $\Hom_c(T_{C_F},\bbc)\approx \Hom_c(T_{C_F}/T_{C_F}^1,\bbc)$. The result follows from the characterization of the quotient $T_{C_F}/T_{C_F}^1$ given in Proposition \ref{prop:tildeXT}. 
\end{proof}
    
\begin{thm}\label{thm:ApexThm}
    For $F$ local or global, the natural homomorphism 
    \[
    \Lambda:H^1_c(W_\ef,\dT) \to \Hom_c(T_{C_F},\bbc^*)
    \]
    described by Langlands is an isomorphism of abelian complex Lie groups.
\end{thm}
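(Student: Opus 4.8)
The plan is to assemble the isomorphism from the pieces already in place, reducing everything to a statement about Lie algebras. By Proposition~\ref{prop:ParCoh} and Theorem~\ref{thm:LangTori}, $\Lambda$ is an isomorphism of abstract abelian groups; the content here is purely topological and complex-analytic. By Propositions~\ref{prop:charTCFconcomp} and~\ref{prop:ConCompOp}, the identity components $H^1_c(W_\ef,\dT)^\circ$ and $\Hom_c(T_{C_F},\bbc^*)^\circ = \widetilde X_T$ are open subgroups of the respective ambient groups, so it suffices to prove that the restriction $\Lambda\colon H^1_c(W_\ef,\dT)^\circ \to \widetilde X_T$ is an isomorphism of complex Lie groups; the statement on the full groups then follows because $\Lambda$ already carries components to components (being an abstract isomorphism) and the component groups are discrete.

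\textbf{Key steps.} First I would invoke the commutative square \eqref{eq:TCFcommdiagram}: the vertical arrows $H^1_c(W_\ef,\Lie(\dT)) \to \Hom_c(T_{C_F},\bbc)$ is the functorial map from Proposition~\ref{prop:Homcmiddle} (combined with Proposition~\ref{prop:H1middle}), which by \cite[Theorem~2.1]{Bi} is an isomorphism, and both horizontal arrows are continuous and open surjections (Proposition~\ref{prop:MooreSummary} and Proposition~\ref{prop:charTCFconcomp}) inducing the complex structures on the respective bottom-right terms by Definition~\ref{def:CanComplexStruc} and Proposition~\ref{prop:tildeXT}. Second, I would identify both middle terms with $\Lie(\dT)^{\Gamma_\ef}$ via the $\bbc$-linear isomorphisms of Propositions~\ref{prop:H1middle} and~\ref{prop:Homcmiddle}, and check that under these identifications the left vertical map of \eqref{eq:TCFcommdiagram} is the identity (equivalently, a $\bbc$-linear automorphism of $\Lie(\dT)^{\Gamma_\ef}$); this is where one must track the explicit cocycle description $\varphi \mapsto (\omega \mapsto \log(|\omega|_\ef)\,\tfrac{d}{dt}\varphi|_{t=0})$ from the proof of Proposition~\ref{prop:H1middle} against the corresponding explicit description of $\Hom_c(V_F,\Lie(\dT)^{\Gamma_\ef}) \approx \Hom_c(T_{C_F},\bbc)$ coming from $\logtc$ and \eqref{eq:LogMap}. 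Third, with the left vertical arrow being a $\bbc$-linear isomorphism and the top and bottom arrows being the defining quotient maps for the complex structures on the two identity components (kernels $K$ and $\Hom(\widetilde L_T,\bbz)$ respectively, which under $\Lambda$ correspond: in the $\bbr$-case both are $0$, in the $\bbz$-case the identification $A \cong \Hom_c(T_{C_F},\bbz) = \Hom(\widetilde L_T,\bbz)$ of Proposition~\ref{prop:ALat} matches them), it follows formally that the induced map $\Lambda$ on the quotients is a $\bbc$-linear (in the $\bbr$-case) or complex-analytic (in the $\bbz$-case) isomorphism. Finally, I would note that $\Lambda$ is then a homeomorphism on identity components, hence on a neighbourhood of the identity, hence a topological group isomorphism on the whole groups, and it respects the complex structures by construction.

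\textbf{Main obstacle.} The routine-looking but genuinely essential step is the second one: verifying that the functorial vertical map in \eqref{eq:TCFcommdiagram}, once both sides are pinned down as $\Lie(\dT)^{\Gamma_\ef}$ through two a priori different chains of identifications (one going through the inflation-restriction sequence for $0 \to W^1_\ef \to W_\ef \to V_F \to 0$ and the exponential sequence, the other going through $\logtc$ and the hom-tensor adjunction), is compatible — i.e. that the two normalizations of $\Lie(\dT)^{\Gamma_\ef}$ agree up to a $\bbc$-linear automorphism, and in particular that no non-holomorphic twist is introduced. This is exactly the point the authors flag when they say the topological properties of $\Lambda$ ``are not manifest from the construction itself'' and that the proof ``reduces to the verification of linearity of the associated map at the level of Lie algebras.'' Concretely it comes down to checking that the reciprocity map $C_F \cong W_F^{\mathrm{ab}}$ is compatible with absolute values (which it is, by construction of the Weil group) so that the two appearances of $\log|\cdot|$ match; once that is in hand, everything else is diagram-chasing.
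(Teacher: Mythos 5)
Your plan follows the paper's proof scaffold closely: reduce to identity components via Propositions \ref{prop:charTCFconcomp} and \ref{prop:ConCompOp}, erect the commutative square \eqref{eq:TCFcommdiagram} (equivalently, the truncated diagram \eqref{eq:ApexDiagram}), and conclude once the derivative $\textup{d}\Lambda$ is known to be a $\bbc$-linear isomorphism. The genuine divergence is at the central step of proving that $\bbc$-linearity. You propose to pin both middle terms down as $\Lie(\dT)^{\Gamma_{E/F}}$ via Propositions \ref{prop:H1middle} and \ref{prop:Homcmiddle}, then track the explicit cocycle formula $\varphi\mapsto\zeta_\varphi$ against $\logtc$ and the reciprocity map. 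The paper bypasses this bookkeeping entirely: via the group-theoretic identifications $H_1(W_{E/F},\dX)\cong H_1(C_E,\dX)^{\Gamma_{E/F}}\cong T_{C_F}$, the map $\textup{d}\Lambda$ is the restriction to continuous classes of $H^1(W_{E/F},\Lie(\dT))\to\Hom(H_1(W_{E/F},\dX),\bbc)$, which is induced by a visibly $\bbc$-bilinear pairing (\cite[Remark~2.5]{Bi} with $D=\bbc$), so $\bbc$-linearity is immediate with no cocycle chase. Your route is in principle workable but markedly more laborious, and two points in it want tightening. First, "the identity (equivalently, a $\bbc$-linear automorphism)" is not an equivalence; only the latter is what you need or can hope to prove, and no claim of the former is warranted. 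Second, compatibility of the reciprocity map with absolute values matches the two appearances of $\log|\cdot|$, but it does not by itself control the complex structure: $\textup{d}\Lambda$ is assembled from cup products with the fundamental class and Shapiro-type identifications, and it is precisely the $\bbc$-bilinearity of the underlying pairing, rather than the absolute-value compatibility, that ensures no anti-holomorphic twist is introduced. The paper's pairing argument handles this in a single stroke, whereas your version would still need to verify it by hand after the $\log$'s are matched.
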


\begin{proof}
    As remarked before, to discuss the continuity of the Langlands map, it suffices to restrict to the identity component. 
    
    Recall from Proposition \ref{prop:charTCFconcomp}, that we have an identification $\Hom_c(T_{C_F},\bbc^*)^\circ\approx\Hom_c(T_{C_F}/T_{C_F}^1,\bbc^*)$. 
    We also have $\Hom_c(T_{C_F},A)\approx \Hom_c(T_{C_F}/T^1_{C_F},A)$ when $A=\bbz$ or $A=\bbc$. We claim that \eqref{eq:TCFES} becomes a short exact sequence upon replacing $\Hom_c(T_{C_F},\bbc^*)$ by $\Hom_c(T_{C_F},\bbc^*)^\circ$. Indeed, $T_{C_F}/T^1_{C_F}$ was shown to be either a lattice or a finite-dimensional real vector space in Proposition \ref{prop:tildeXT}. 
    Thus, the claim reduces to the cases $T_{C_F}/T^1_{C_F}\approx\bbr$ and $T_{C_F}/T^1_{C_F}\approx\bbz$, 
    both of which are trivial. We thus obtain a commutative diagram, below, where the rightmost column is the sequence just described:
    \begin{equation}\label{eq:ApexDiagram}
\begin{tikzcd}
0 \arrow[d]                                                    &0\arrow[d]\\
A_F \arrow[d]\arrow[r]                                         &\Hom_c(T_{C_F},\bbz) \arrow[d]\\
H_c^1(W_\ef,\Lie(\dT)) \arrow[d]\arrow[r, "\textup{d}\Lambda"] &\Hom_c(T_{C_F},\bbc) \arrow[d]\\
H_c^1(W_\ef,\dT)^\circ \arrow[d]\arrow[r, "\Lambda"]           &\Hom_c(T_{C_F},\bbc^*)^\circ\arrow[d]\\
0                                                              &0
\end{tikzcd}
.
\end{equation}
The commutativity of \eqref{eq:ApexDiagram} follows from the commutativity of \eqref{eq:TCFcommdiagram} and the column in the left is the long exact sequence in cohomology \eqref{eq:LES} obtained from the short exact sequence $0\to X \to \Lie(\dT)\to\dT\to 0$, when truncated with respect to the identity component of $H_c^1(W_\ef,\dT)$. Here, the module $A_F$ is obtained by modding-out the torsion submodule of $H_c^1(W_\ef,X)$. When $V_F\cong \bbr$ we have $A_F=0$, while when $V_F\cong \bbz$, $A_F$ is the lattice described in Proposition \ref{prop:ALat}. 

By Propositions \ref{prop:H1middle} and \ref{prop:Homcmiddle}, both sides of the middle row are  finite-dimensional complex vector spaces, while  $\Lambda$ is a homomorphism between (connected) abelian Lie groups. Moreover, each column is the universal cover of its respective group. 
To prove our result,
it is enough to show that the map $\textup{d}\Lambda$ is $\bbc$-linear. To do so, the key step is the $\bbz$-bilinear pairing 
of \cite[Remark 2.5]{Bi}, which for \( D = \bbc \), yields the $\bbc$-bilinear pairing 
\(
H^1(W_{\ef},\Lie(\dT)) \times H_1(W_{\ef},\dX) \to \bbc
\) 
and hence the $\bbc$-linear homomorphism
\begin{equation}\label{eq:TheLinearMap}
H^1(W_{\ef},\Lie(\dT)) \to \Hom(H_1(W_{\ef},\dX),\bbc).
\end{equation}  
Through the chain of identifications   
\[
H_1(W_{\ef},\dX) \cong H_1(C_E,\dX)^{\Gamma_{\ef}} \cong \Hom_{\Gamma_{\ef}}(X,C_E) = T_{C_F}
\]  
(see \cite[p. 241]{La} and \cite[Propositions 2.3 and 2.8]{Bi}), $\textup{d}\Lambda$ is the restriction of \eqref{eq:TheLinearMap} to continuous classes (see \cite[Proposition 2.15]{Bi}), hence $\bbc$-linear, as required. Since the identity components on both sides of $\Lambda$ are open, we are done.
\end{proof}

Combined with the fact that $\Hom_c(T_{C_F},\bbc^*)\to \Hom_c(T(\bba_F)/T(F),\bbc^*)$ is open (Proposition \ref{prop:OpClosed}), Theorem \ref{thm:mainThm} is proved.

\begin{prop}\label{prop:H2X}
    For $F$ local or global, we have $\uH^2(W_\ef,X)\approx \Hom_c(T_{C_F}^1,\bbc^*)$.
\end{prop}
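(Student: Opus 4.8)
The plan is to recognise $\uH^2(W_\ef,X)$ as the component group of $H^1_c(W_\ef,\dT)$ and then transport it, via Langlands's isomorphism, onto the component group of $\Hom_c(T_{C_F},\bbc^*)$, which Proposition~\ref{prop:charTCFconcomp} already identifies with $\Hom_c(T^1_{C_F},\bbc^*)$. Concretely, the first step: since $\Lambda\colon H^1_c(W_\ef,\dT)\to\Hom_c(T_{C_F},\bbc^*)$ is an isomorphism of topological groups by Theorem~\ref{thm:ApexThm}, it carries $H^1_c(W_\ef,\dT)^\circ$ homeomorphically onto $\Hom_c(T_{C_F},\bbc^*)^\circ=\widetilde X_T$ and hence induces a topological isomorphism
\[
H^1_c(W_\ef,\dT)\big/H^1_c(W_\ef,\dT)^\circ\ \approx\ \Hom_c(T_{C_F},\bbc^*)\big/\widetilde X_T\ \approx\ \Hom_c(T^1_{C_F},\bbc^*),
\]
the last isomorphism being Proposition~\ref{prop:charTCFconcomp}. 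So everything reduces to identifying the leftmost group with $\uH^2(W_\ef,X)$.

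For that I would use the continuation of the long exact sequence \eqref{eq:LES} attached to $0\to X\to\Lie(\dT)\to\dT\to 0$,
\[
\uH^1(W_\ef,\Lie(\dT))\ \xrightarrow{\ \alpha\ }\ \uH^1(W_\ef,\dT)\ \xrightarrow{\ \partial\ }\ \uH^2(W_\ef,X)\ \xrightarrow{\ \beta\ }\ \uH^2(W_\ef,\Lie(\dT)),
\]
whose maps are all continuous by Proposition~\ref{prop:MooreSummary}(d). By (the proof of) Proposition~\ref{prop:ConCompOp} one has $\ker\partial=\operatorname{im}\alpha=H^1_c(W_\ef,\dT)^\circ$, so $\partial$ descends to a continuous injective homomorphism $H^1_c(W_\ef,\dT)/H^1_c(W_\ef,\dT)^\circ\hookrightarrow\uH^2(W_\ef,X)$ with image $\ker\beta$; since the source is discrete (as $H^1_c(W_\ef,\dT)^\circ$ is open, Proposition~\ref{prop:ConCompOp}) and the target is discrete (Proposition~\ref{prop:DiscreteIsDiscrete}), this injection is a homeomorphism onto $\ker\beta$. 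It therefore only remains to show $\beta=0$, i.e.\ that $\uH^2(W_\ef,\Lie(\dT))=0$.

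This vanishing is the single genuinely substantive point. I would deduce it from the Lyndon--Hochschild--Serre spectral sequence in Moore's measurable setting (available by the remark following Proposition~\ref{prop:MooreSummary}) for the extension $0\to W^1_\ef\to W_\ef\to V_F\to 0$, with $E_2^{p,q}=\uH^p\!\big(V_F,\uH^q(W^1_\ef,\Lie(\dT))\big)\Rightarrow\uH^{p+q}(W_\ef,\Lie(\dT))$. For $q\geq 1$ one has $\uH^q(W^1_\ef,\Lie(\dT))=0$ because $W^1_\ef$ is compact and $\Lie(\dT)$ a finite-dimensional real vector space (Proposition~\ref{prop:MooreSummary}(g)), so $E_2^{p,q}=0$ whenever $q\geq 1$. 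For $p\geq 2$ one has $\uH^p(V_F,M)=0$ for the coefficient module $M=\Lie(\dT)^{W^1_\ef}$: if $V_F\cong\bbz$ this holds for any coefficients (the fact $\uH^r(\bbz,-)=0$ for $r>1$ already used in the proof of Proposition~\ref{prop:DiscreteIsDiscrete}), and if $V_F\cong\bbr$ then $V_F$ acts trivially on $\Lie(\dT)^{W^1_\ef}=\Lie(\dT)^{\Gamma_\ef}$ (proof of Proposition~\ref{prop:H1middle}) and $\uH^p(\bbr,-)$ vanishes for $p\geq 2$ on finite-dimensional trivial modules (identifying measurable with continuous cohomology and computing the Lie-algebra cohomology of $\bbr$). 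Hence the three terms $E_2^{2,0},E_2^{1,1},E_2^{0,2}$ all vanish, so $\uH^2(W_\ef,\Lie(\dT))=0$.

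Chaining the two displayed identifications then gives $\uH^2(W_\ef,X)\approx H^1_c(W_\ef,\dT)/H^1_c(W_\ef,\dT)^\circ\approx\Hom_c(T^1_{C_F},\bbc^*)$, as claimed. The only place any work is needed is the vanishing $\uH^2(W_\ef,\Lie(\dT))=0$; the rest is assembling results already in hand (Theorem~\ref{thm:ApexThm} and Propositions~\ref{prop:charTCFconcomp}, \ref{prop:ConCompOp}, \ref{prop:DiscreteIsDiscrete}). As a sanity check one may verify directly in the basic case $T=\bbg_m$ over $\bbr$ that all three groups equal $\bbz/2\bbz$.
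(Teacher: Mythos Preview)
Your proof is correct and follows the same overall architecture as the paper's: identify $\uH^2(W_\ef,X)$ with the component group of $H^1_c(W_\ef,\dT)$ via the long exact sequence attached to $0\to X\to\Lie(\dT)\to\dT\to 0$, then transport this via $\Lambda$ and Proposition~\ref{prop:charTCFconcomp}. In both arguments the one substantive point is the vanishing $\uH^2(W_\ef,\Lie(\dT))=0$.

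Where you diverge is in how you obtain that vanishing. The paper does not run the spectral sequence for $W^1_\ef\to W_\ef\to V_F$. Instead it invokes Rajan's theorem $\uH^2(W_F,\dT)=0$ and uses inflation--restriction along $W_E^c\to W_F\to W_\ef$ (together with compactness of $W_E^c$) to conclude that $\uH^2(W_\ef,\dT)$ is \emph{countable}. Then it observes that $\uH^2(W_\ef,\Lie(\dT))$ is a real vector space squeezed (in the long exact sequence) between the countable groups $\uH^2(W_\ef,X)$ and $\uH^2(W_\ef,\dT)$, hence must be zero. Your argument is more self-contained in that it avoids the external input from Rajan, trading it for the standard facts that $\uH^p(\bbz,-)=0$ for $p\geq 2$ and $\uH^p(\bbr,M)=0$ for $p\geq 2$ on finite-dimensional trivial modules; the latter is true (via the identification of measurable with continuous cohomology for Lie groups and van Est), though not among the results the paper has explicitly recorded. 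The paper's route, on the other hand, extracts a little more along the way: it shows $\uH^2(W_\ef,\dT)$ is countable, which is of independent interest.
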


\begin{proof}
    It is known that $\uH^2(W_F,\dT)=0$ (see \cite[Theorem 2]{Ra}), where $W_F$ is the absolute Weil group of $F$. From the extension $0\to W_E^c\to W_F\to W_\ef\to 0$, the inflation-restriction sequence implies that
    \[
    \uH^1(W_E^c,\dT)^{W_\ef}\to \uH^2(W_\ef,\dT)\to \uH^2(W_F,\dT)
    \]
    is exact, and since $\uH^2(W_F,\dT)=0$, the first map is surjective. 
    Recall that $W_E^c$ is compact, and that 
    the action of $W_E$ on $\dT$ is trivial. In particular we  have 
    $\uH^1(W_E^c,\dT) \approx \Hom_c(W_E^c,\dT)$. Taking the long exact sequence in Moore's measurable cohomology theory for the compact group $W_E^c$ associated to the short exact sequence (\ref{eq:FFSES}), and using Proposition 
    \ref{prop:MooreSummary}(f) and (g), we obtain that $\uH^1(W_E^c,\dT)\approx \uH^2(W_E^c,X)$ is countable and discrete. It follows that $\uH^2(W_\ef,\dT)$ is countable.
    With this as input, consider the long exact sequence in  Moore's measurable cohomology for $W_{E/F}$ for the short exact sequence (\ref{eq:FFSES}). 
    We see that $\uH^2(W_\ef,\Lie(\dT))$, which is a priori a 
    real vector space, is squeezed between $\uH^2(W_\ef,X)$ (a countable discrete abelian group by Proposition \ref{prop:DiscreteIsDiscrete}), 
    and $\uH^2(W_\ef,\dT)$ (a countable abelian group by the above). Hence, $\uH^2(W_\ef,\Lie(\dT))$ is $0$.
    From the isomorphism $\Lambda$, Proposition \ref{prop:H1middle} and Proposition \ref{prop:charTCFconcomp} 
    it now follows that:
    \[\uH^2(W_\ef,X)\approx\frac{\uH^1(W_\ef,\dT)}{\uH^1(W_\ef,\dT)^\circ} \approx
    \frac{\Hom_c(T_{C_F},\bbc^*)}{\Hom_c(T_{C_F},\bbc^*)^\circ}\approx\frac{\Hom_c(T_{C_F},\bbc^*)}{\Hom_c(T_{C_F}/T_{C_F}^1,\bbc^*)},  \]
    from which we conclude
    $\uH^2(W_\ef,X)\approx \Hom_c(T_{C_F}^1,\bbc^*).$
\end{proof}

\begin{rem}\label{rem:Schwein}
    Using the topological isomorphism of Proposition \ref{prop:LocTriv} and considering the long exact sequence \eqref{eq:FFSES} with $W_{\ef}$ replaced by $W_F$ (where we also have $\uH^2(W_F,\Lie(\dT)) = 0$), we obtain an isomorphism $\uH^2(W_\ef,X) \approx \uH^2(W_F,X)$. The latter was the formulation used by Schwein in \cite[Section 5.2]{Sc} (in the local non-Archimedean case).
\end{rem}

\begin{thm}\label{thm:Zcase}
Suppose that $V_F\cong \bbz$. If $F$ is local non-Archimedean, then,  $H_c^1(\we,\dT)^\circ$ is isomorphic to $X_T$. When $F$ is a global function field, then the group $H_c^1(\we,\dT)^\circ/\sim_{l.e.}$
is isomorphic to $X_T$. 
\end{thm}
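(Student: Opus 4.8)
The plan is to assemble the statement from the structural results already established, since by this point almost all of the genuine work is done. First, suppose $F$ is local non-Archimedean. In this case the conclusion is already contained in Corollary~\ref{cor:Amax}, whose final clause asserts that the canonical surjection $H_c^1(\we,\dT)^\circ\to X_T$ is an isomorphism; equivalently, by Theorem~\ref{thm:LangTori} the map $\rho$ is an isomorphism in the local case, so $\rho\circ\Lambda\colon H_c^1(\we,\dT)\to\mathsf{X}_\bbc(T,F)$ is an isomorphism of complex Lie groups by Theorem~\ref{thm:ApexThm}, and restricting to identity components gives $H_c^1(\we,\dT)^\circ\approx\mathsf{X}_\bbc(T,F)^\circ=X_T$ by Corollary~\ref{cor:main0}.

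Now suppose $F$ is a global function field. The idea is to run the composite $\rho\circ\Lambda$ through identity components. By Theorem~\ref{thm:ApexThm}, $\Lambda$ is an isomorphism of complex Lie groups, hence a homeomorphism, so it carries $H_c^1(\we,\dT)^\circ$ onto $\Hom_c(T_{C_F},\bbc^*)^\circ$, which is exactly $\widetilde X_T$ by Proposition~\ref{prop:charTCFconcomp}. By Proposition~\ref{prop:OpClosed} the restriction map $\rho$ is continuous, open and has finite kernel, and by (the proof of) Corollary~\ref{cor:main0} it sends $\widetilde X_T$ onto $X_T$. Composing, $\rho\circ\Lambda\colon H_c^1(\we,\dT)^\circ\to X_T$ is a continuous, open, surjective homomorphism with finite kernel, i.e.\ a surjective morphism of complex Lie groups with finite kernel.

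It then remains to identify that kernel with the locally trivial classes lying in the identity component, and for this I would appeal to Theorem~\ref{thm:LangTori} directly: Langlands's theorem says that, on all of $H_c^1(\we,\dT)$, the kernel of $\rho\circ\Lambda$ is precisely the subgroup of locally trivial classes of Definition~\ref{def:LocTrivClasses}. Since $H_c^1(\we,\dT)^\circ$ is a subgroup, two of its elements $x,y$ satisfy $x\sim_{l.e.}y$ precisely when $x-y$ is locally trivial, i.e.\ precisely when $x-y$ lies in $\ker(\rho\circ\Lambda)\cap H_c^1(\we,\dT)^\circ$; so the $\sim_{l.e.}$-classes inside the identity component are exactly the cosets of this finite subgroup, and the first isomorphism theorem for topological groups — available because $\rho\circ\Lambda$ restricted to $H_c^1(\we,\dT)^\circ$ is open and surjects onto $X_T$ — shows that $H_c^1(\we,\dT)^\circ/\sim_{l.e.}$ is isomorphic to $X_T$, as claimed.

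The main point requiring care is conceptual rather than computational, and I do not anticipate a serious obstacle beyond it. One must route the identification of the finite kernel through Langlands's map $\rho$, whose kernel is \emph{by construction} the group of locally trivial classes, rather than through the lattice inclusion $A\hookrightarrow\widehat L_T$ used in proving Corollary~\ref{cor:Amax}. One should also record explicitly that passing to identity components costs nothing, because both $H_c^1(\we,\dT)^\circ$ and $X_T$ are open in their ambient groups (by Proposition~\ref{prop:ConCompOp} and Corollary~\ref{cor:main0}), so that $\rho\circ\Lambda$ restricted to the identity component remains open and surjective onto $X_T$.
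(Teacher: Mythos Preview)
Your proof is correct but takes a somewhat different route from the paper's in the global function field case. For the local non-Archimedean case both arguments coincide (invoke Corollary~\ref{cor:Amax}).

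For the global case, you compose $\rho\circ\Lambda$, restrict to identity components via Theorem~\ref{thm:ApexThm}, Proposition~\ref{prop:charTCFconcomp} and Corollary~\ref{cor:main0}, and then appeal directly to Theorem~\ref{thm:LangTori} to identify the kernel of $\rho\circ\Lambda$ with the locally trivial classes; the first isomorphism theorem finishes. The paper instead makes the local--global mechanism explicit: for each place $v$ it builds a commutative square
\[
\begin{tikzcd}
H^1_c(W_{E/F}, \dT)^\circ \arrow[r] \arrow[d,twoheadrightarrow] & H^1_c(W_{v'|v}, \dT)^\circ \arrow[d,"\approx"] \\
X_T \arrow[r] & X_{T,v}
\end{tikzcd}
\]
(using the local isomorphism of Corollary~\ref{cor:Amax} on the right), and then argues directly that a global parameter $\phi\in H_c^1(\we,\dT)^\circ$ maps to the trivial automorphic character if and only if every local component $\phi_v$ is trivial --- because automorphic characters are determined by their local components and the local correspondences are bijective. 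Your approach is more economical, treating Langlands's characterization of the kernel as a black box already stated in Theorem~\ref{thm:LangTori}; the paper's approach is more self-contained, re-deriving that characterization on the identity component and rendering the compatibility with localization transparent.
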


\begin{proof}
See Corollary \ref{cor:Amax} for the proof in the non-Archimedean local cases. For global function fields, the proof of Corollary \ref{cor:Amax} yields a commutative diagram (in which the rightmost vertical arrow is a surjection with finite kernel):
\begin{equation}\label{eq:XTdiagrams}
\begin{tikzcd}
0 \arrow[r] & A \arrow[r] \arrow[d,hookrightarrow ]   & H^1_c(W_\ef,\Lie(\dT)) \arrow[r]\arrow[d,"\approx"] &H_c^1(W_\ef,\dT)^\circ \arrow[r]\arrow[d,twoheadrightarrow]& 0\\
0 \arrow[r] & \widehat{L}_T \arrow[r]                                   & \Lie(\dT)^\ga \arrow[r]                           &X_T \arrow[r]                                                                           & 0
\end{tikzcd}
.
\end{equation}
We claim that the kernel of $H_c^1(W_\ef,\dT)^\circ\to X_T$ consists of locally trivial classes. For each local place $v\in\Pl(F)$, let us denote by $E_{v'} = EF_v$ the induced completion of $E$ and denote by $\Gamma_{v'|v}$ and $W_{v'|v}$ the Galois group and the relative Weil group of the extension $E_{v'}/F_v$. Let also $L_{T,v}\approx T(F_v)/T(F_v)^1$ denote the image of the local $\log_T$ map, and let $X_{T,v}$ denote the space of local unramified characters.
Using the isomorphism between $X_{T,v}$ and $H_c^1(W_{v'|v},\dT)^\circ$ of Corollary \ref{cor:Amax}, we obtain a commutative diagram 
\[
\begin{tikzcd}
0 \arrow[r] & \widehat{L}_{T,v} \arrow[r]                                   & \Lie(\dT)^{\Gamma_{v'|v}} \arrow[r]           &X_{T,v} \arrow[r]                                              & 0\\
0 \arrow[r] & \Hom_c(T(F_v),\bbz) \arrow[r] \arrow[u, "\approx"]   & H_c^1(W_{v'|v},\Lie(\dT)) \arrow[r]\arrow[u,"\approx"]  &H_c^1(W_{v'|v},\dT)^\circ \arrow[r]\arrow[u,"\approx"]& 0\\
0 \arrow[r] &  A\arrow[r] \arrow[d,hookrightarrow ]\arrow[u]   & H_c^1(W_\ef,\Lie(\dT)) \arrow[r]\arrow[d,"\approx"]\arrow[u]  &H_c^1(W_\ef,\dT)^\circ \arrow[r]\arrow[d,twoheadrightarrow]\arrow[u]& 0\\
0 \arrow[r] & \widehat{L}_T \arrow[r]                                   & \Lie(\dT)^\ga \arrow[r]                          &X_T \arrow[r]                                                                           & 0
\end{tikzcd}
.
\]
The vertical arrows on the top and bottom right-hand side are the restriction to the identity components of the local and global Langlands maps respectively, and the map in the middle is the natural assignment from a global to local Langlands parameters. We can further visualize the rightmost column via the commutative diagram
\[
\begin{tikzcd}
H^1_c(W_{E/F}, \dT)^\circ \arrow[r] \arrow[d,twoheadrightarrow] & H^1_c(W_{v'|v}, \dT)^\circ \arrow[d] \\
X_T \arrow[r] & X_{T,v}
\end{tikzcd}
\]
defined for each place $v$ of $F$. Making use of the canonical map 
\[\prod_v\Res_v:H^1_c(W_{F}, \dT) \to \prod_v H^1_c(W_{F_v}, \dT)\] 
of Proposition \ref{prop:LocTriv}, after identifying the cohomology groups of the absolute and relative Weil groups using the inflation maps and restricting to the connected components, we obtain a well-defined map
\[
X_T \to  \prod_v X_{T,v}.
\]
Now, given a global parameter $\phi\in H_c^1(W_\ef,\dT)^\circ$, let $\phi_{v}$ denote the corresponding local parameters of $\phi$, and let  
$\chi_\phi\in X_T$ denote 
the corresponding automorphic character of $T$. Since automorphic characters are completely determined by their local components, since Langlands's maps are compatible with localization to a local place, and since the local Langlands correspondences are bijective, 
it follows that $\chi_\phi=1$ if and only if all local 
parameters $\phi_v$ are trivial. This finishes the proof. 
\end{proof}

With Theorem \ref{thm:Zcase} at hand, we can recover the explicit description of $X_T$ from \cite[Section 3.3.1]{Ha}, when $F$ is local non-Archimedean. To see that, recall that in this case, we have the short exact sequence $0\to I_F\to W_F\to \lpi\textup{Fr}\rpi \to 0$, where $W_F$ is the absolute Weil group of $F$, $I_F$ is the inertia subgroup, $\textup{Fr}$ is a Frobenius automorphism and $\lpi \textup{Fr} \rpi\approx\bbz$. Since the arrow $W_F\to \lpi \textup{Fr} \rpi$ factors through $W_\ef \to \lpi \textup{Fr} \rpi$, as the continuous image of the closed commutator group $W_E^c$ has a trivial image, we obtain the short exact sequence
\begin{equation}\label{eq:InertiaSeq}
    0 \to N_F \to W_\ef \to \lpi \textup{Fr} \rpi \to 0,
\end{equation}
where $N_F$ is the image of $I_F$ in the quotient $W_F/W_E^c\cong W_\ef$. It is well known that $N_F$ is compact, as $I_F$ is profinite \cite[Section (1.4.1)]{Ta1}.  

\begin{prop}[\cite{Ha}]\label{prop:LnAHaines}
    When $F$ is local non-Archimedean, the space of unramified characters $X_T$ is isomorphic to $((\dT^{N_F})_{\textup{Fr}})^\circ$, 
    as complex algebraic tori.
\end{prop}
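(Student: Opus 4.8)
The plan is to combine Theorem~\ref{thm:Zcase} (which identifies $X_T$ with $H^1_c(\we,\dT)^\circ$ in the local non-Archimedean case) with the inflation--restriction sequence attached to~\eqref{eq:InertiaSeq}, and then to unwind the resulting cohomology in terms of $\dT$. Concretely, I would start from the short exact sequence $0\to N_F\to W_\ef\to\lpi\Fr\rpi\to 0$ with $N_F$ compact and $\lpi\Fr\rpi\cong\bbz$, and write down the low-degree inflation--restriction exact sequence for the coefficient module $\dT$ (on which $W_E$, and hence $C_E$, acts trivially, so the action factors through $\Gamma_\ef$):
\[
0\to \uH^1(\lpi\Fr\rpi,\dT^{N_F})\to \uH^1(W_\ef,\dT)\to \uH^1(N_F,\dT)^{\Fr}\to \uH^2(\lpi\Fr\rpi,\dT^{N_F}).
\]
By Proposition~\ref{prop:MooreSummary}(f),(g) applied to the compact group $N_F$ via the exponential sequence~\eqref{eq:FFSES} (so $\uH^1(N_F,\dT)\cong\uH^2(N_F,X)$ is countable and discrete), the term $\uH^1(N_F,\dT)^{\Fr}$ is discrete; also $\uH^2(\lpi\Fr\rpi,\dT^{N_F})=0$ since $\lpi\Fr\rpi\cong\bbz$ has cohomological dimension~$1$. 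Passing to identity components (which is exact here because the outer terms contribute only discrete groups) gives a topological isomorphism $\uH^1(W_\ef,\dT)^\circ\approx \uH^1(\lpi\Fr\rpi,\dT^{N_F})^\circ$.

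Next I would compute $\uH^1(\lpi\Fr\rpi,\dT^{N_F})$. Since $\lpi\Fr\rpi\cong\bbz$, for any $\bbz$-module $A$ one has $\uH^1(\bbz,A)=A/(\Fr-1)A=A_{\Fr}$, the coinvariants (here using, as in the proof of Proposition~\ref{prop:DiscreteIsDiscrete}, that the measurable, continuous and abstract $\bbz$-cohomology theories coincide). Thus $\uH^1(\lpi\Fr\rpi,\dT^{N_F})\cong (\dT^{N_F})_{\Fr}$, and taking the identity component yields $\uH^1(W_\ef,\dT)^\circ\approx ((\dT^{N_F})_{\Fr})^\circ$. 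Combining with Theorem~\ref{thm:Zcase} (and Corollary~\ref{cor:Amax}, which already records that $H^1_c(\we,\dT)^\circ$ is a complex algebraic torus isomorphic to $X_T$) gives $X_T\approx ((\dT^{N_F})_{\Fr})^\circ$. I would also note that, as a finite group, $N_F$ (being a quotient of the profinite $I_F$ landing in $W_\ef/\text{(stuff)}$) acts on $\dT$ through a finite quotient, so $\dT^{N_F}$ is a (possibly disconnected) diagonalizable group and $(\dT^{N_F})_{\Fr}$ is again diagonalizable, making the identity-component statement meaningful as an isomorphism of complex algebraic tori; one should check the complex-analytic/algebraic structures match, which follows from Definition~\ref{def:CanComplexStruc} and the fact that all the identifications above are built from the functorial exponential sequence.

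The main obstacle I anticipate is bookkeeping the topologies and complex structures rather than the underlying group isomorphism: one must verify that the inflation--restriction sequence in Moore's theory really does restrict to identity components as claimed (using Proposition~\ref{prop:MooreSummary}(e) for continuity of all maps and the discreteness inputs from (f),(g) to control the kernel/cokernel), and that the isomorphism $\uH^1(\bbz,A)\approx A_{\Fr}$ is a homeomorphism for $A=\dT^{N_F}$ compatibly with the canonical complex structure of Definition~\ref{def:CanComplexStruc}. A secondary point to be careful about is identifying $\dT^{N_F}$ correctly: $N_F$ is the image of the inertia $I_F$ in $W_\ef$, and one should confirm that $\dT^{N_F}$ coincides with the fixed points of the image of inertia in $\Gamma_\ef$, so that the statement matches the description in \cite[Section 3.3.1]{Ha}; this is immediate since $C_E$ acts trivially on $\dT$ and the action factors through $\Gamma_\ef$. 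Once these compatibilities are in place the proof is short.

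\begin{proof}
By Theorem~\ref{thm:Zcase}, in the local non-Archimedean case $X_T\approx H^1_c(\we,\dT)^\circ$ as complex algebraic tori (see also Corollary~\ref{cor:Amax}), so it suffices to identify $H^1_c(\we,\dT)^\circ\approx\uH^1(W_\ef,\dT)^\circ$ with $((\dT^{N_F})_{\Fr})^\circ$.

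Consider the short exact sequence~\eqref{eq:InertiaSeq}, with $N_F$ compact and $\lpi\Fr\rpi\cong\bbz$. Since $W_E$, and hence $C_E$, acts trivially on $\dT$, the $W_\ef$-action on $\dT$ factors through $\Gamma_\ef$, and in particular through the finite image of $N_F$; thus $\dT^{N_F}$ is a diagonalizable complex group. The inflation--restriction exact sequence in Moore's cohomology (continuous by Proposition~\ref{prop:MooreSummary}(e)) reads
\[
0\to \uH^1(\lpi\Fr\rpi,\dT^{N_F})\to \uH^1(W_\ef,\dT)\to \uH^1(N_F,\dT)^{\Fr}\to \uH^2(\lpi\Fr\rpi,\dT^{N_F}).
\]
Applying the long exact sequence of $N_F$-cohomology to~\eqref{eq:FFSES} together with Proposition~\ref{prop:MooreSummary}(f),(g) (for the compact group $N_F$) gives $\uH^1(N_F,\dT)\cong\uH^2(N_F,X)$, which is countable and discrete; hence $\uH^1(N_F,\dT)^{\Fr}$ is discrete. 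Also $\uH^2(\lpi\Fr\rpi,\dT^{N_F})=0$ since $\lpi\Fr\rpi\cong\bbz$. Passing to identity components, the outer terms contribute nothing, so we obtain a topological isomorphism
\[
\uH^1(W_\ef,\dT)^\circ\approx \uH^1(\lpi\Fr\rpi,\dT^{N_F})^\circ.
\]
Finally, as in the proof of Proposition~\ref{prop:DiscreteIsDiscrete}, for $\lpi\Fr\rpi\cong\bbz$ the measurable and abstract cohomology theories agree, so $\uH^1(\lpi\Fr\rpi,\dT^{N_F})\cong (\dT^{N_F})/(\Fr-1)(\dT^{N_F})=(\dT^{N_F})_{\Fr}$, and this identification is a homeomorphism compatible with the complex structure of Definition~\ref{def:CanComplexStruc} (both sides being built from the functorial exponential sequence~\eqref{eq:FFSES}). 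Taking identity components yields $\uH^1(W_\ef,\dT)^\circ\approx ((\dT^{N_F})_{\Fr})^\circ$ as complex algebraic tori, and combining with the first paragraph gives $X_T\approx ((\dT^{N_F})_{\Fr})^\circ$.
\end{proof}
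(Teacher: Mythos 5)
Your proof is correct and takes essentially the same route as the paper: identify $X_T$ with $H^1_c(W_\ef,\dT)^\circ$ via Theorem~\ref{thm:Zcase}/Corollary~\ref{cor:Amax}, apply the inflation--restriction sequence attached to \eqref{eq:InertiaSeq}, use compactness of $N_F$ with Proposition~\ref{prop:MooreSummary}(f),(g) to get discreteness of $\uH^1(N_F,\dT)^{\Fr}$, and compute $\uH^1(\lpi\Fr\rpi,\dT^{N_F})\cong(\dT^{N_F})_{\Fr}$ via the agreement of cohomology theories over $\bbz$. The paper's proof is slightly more careful about two points you gloss over: it explicitly argues that the injective continuous map $\uH^1(\lpi\Fr\rpi,\dT^{N_F})\to\uH^1(W_\ef,\dT)$ is open (identifying its image with the closed and open kernel $K$ of the restriction map, so that ``passing to identity components'' is justified as a homeomorphism rather than just a bijection), and it exhibits an explicit commutative diagram to verify compatibility of the complex structures from Definition~\ref{def:CanComplexStruc}, rather than asserting it. One small slip in your preamble: $N_F$ is compact (profinite), not finite --- what is true, and what you correctly use in the proof proper, is that its action on $\dT$ factors through a finite quotient of $\Gamma_\ef$.
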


\begin{proof}
    From the short exact sequence $0\to X \to \Lie(\dT) \to \dT \to 0$, we obtain, from the long exact sequence in $N_F$-cohomology, the exact sequence
    \[
   \uH^1(N_F,\Lie(\dT))\to\uH^1(N_F,\dT)\to \uH^2(N_F,X)\to \uH^2(N_F,\Lie(\dT)).
    \]
Since $N_F$ is compact, from Proposition \ref{prop:MooreSummary}, items (f) and (g), we obtain that $\uH^1(N_F,\dT)\approx \uH^2(N_F,X)$ is countable and discrete, as a topological space. Now, from the short exact sequence \eqref{eq:InertiaSeq} and using the first few terms of the inflation-restriction sequence applied to the $W_\ef$-module $\dT$, we obtain the exact sequence
 \begin{equation}\label{eq:localXTiso1}
    0\to \uH^1(\lpi \textup{Fr} \rpi,\dT^{N_F}) \to \uH^1(W_\ef,\dT) \to
    \uH^1(N_F,\dT)^{\langle \textup{Fr}\rangle}
\end{equation}
where all the arrows are continuous.  As $ \lpi \textup{Fr} \rpi\approx \bbz$ 
is discrete, we know that the measurable, the continuous and the abstract cohomology theories coincide, from which we obtain that \[\uH^1(\lpi \textup{Fr} \rpi,\dT^{N_F}) = H_c^1(\lpi \textup{Fr} \rpi,\dT^{N_F})= H^1(\lpi \textup{Fr} \rpi,\dT^{N_F}) = (\dT^{N_F})_{\textup{Fr}}\] is a complex diagonalizable group (recall that the action of $W_\ef$ on $\dT$ is via a finite group of automorphisms of complex algebraic tori). 
Since $\uH^1(W_\ef,\dT)$ is a complex Lie  group (by Theorem \ref{thm:Rcase}, \eqref{eq:FFSES2} and Definition \ref{def:CanComplexStruc}) and $\uH^1(N_F,\dT)^{\langle \textup{Fr}\rangle}$ is discrete, 
the exactness of \eqref{eq:localXTiso1}
implies that we have a continuous  isomorphism of Lie groups between
$\uH^1(\lpi \textup{Fr} \rpi,\dT^{N_F})$ and $K\subset \uH^1(W_\ef,\dT)$, the kernel of $\uH^1(W_\ef,\dT) \to \uH^1(N_F,\dT)^{\langle \textup{Fr} \rangle}$, which a closed and open subgroup of $\uH^1(W_\ef,\dT)$. Hence, $\uH^1(\lpi \textup{Fr} \rpi,\dT^{N_F}) \to \uH^1(W_\ef,\dT)$ is an injective open homomorphism of Lie groups, from which we conclude that 
\[
((\dT^{N_F})_{\textup{Fr}})^\circ=\uH^1(\lpi \textup{Fr} \rpi,\dT^{N_F})^\circ \approx \uH^1(W_\ef,\dT)^\circ\approx X_T
\]
are isomorphisms of Lie groups. From the commutative diagram
\[
\begin{tikzcd}
    \uH^1(\lpi \textup{Fr} \rpi, \Lie(\dT)^{N_F})\arrow[r,equal]\arrow[d,]&\Lie(\dT)^{\ga} \arrow[r,"\approx"]\arrow[d]& \uH^1(\we,\Lie(\dT))\arrow[d,]\\
        \uH^1(\lpi \textup{Fr} \rpi, \dT^{N_F})^\circ\arrow[r, equal]&((\dT^{N_F})_\textup{Fr})^\circ \arrow[r,]& \uH^1(\we,\dT)^\circ
    \end{tikzcd},
\]
since the top-right arrow is the $\bbc$-linear isomorphism of Proposition \ref{prop:H1middle} and the leftmost vertical  arrow is the one defining the complex structure on $\uH^1(\we,\dT)$ (see Definition \ref{def:CanComplexStruc}),
the isomorphism $\uH^1(\lpi \textup{Fr} \rpi,\dT^{N_F})^\circ \approx \uH^1(W_\ef,\dT)^\circ$ preserves the complex structures, and we are done.
\end{proof}

\section{Explicit Cocycles; Proof of Main Theorem \ref{thm:mainThm2}}\label{sec:ExplicitCocycle}

In this last section, we exhibit an explicit realization of certain cocycles in $Z^1_c(W_\ef,A)$, and thus cohomology classes in $H_c^1(W_\ef,A)$, with $A$ being either $\Lie(\dT)$ or $\dT$.  The information provided here will be used to finalize the proof of Theorem \ref{thm:mainThm2}.

Let us retain the convention of Section \ref{s:unrchar} and write $\log_q$ where $q=e$ if $V_F= \bbr_+$ and $q=q_F$ if $V_F =q_F^{\bbz}$. Given any element $\nu\in \Lie(\dT)^{\Gamma_\ef}$, note that it determines a continuous $1$-cochain $\zeta_\nu:W_\ef\to \Lie(\dT)$  given by
\begin{equation}\label{eq:ExplicitCocycle}
\zeta_\nu(\omega)= (\log_{q} |\omega |_\ef)\nu,
\end{equation}
where $\omega\in W_\ef$. In other words, the assignment $\nu\mapsto \zeta_\nu$ determines a function
\[
\zeta:\Lie(\dT)^{\Gamma_\ef}\to C^1_c(W_\ef,\Lie(\dT))
\]
(where we recall that $C^1_c(W_\ef, \Lie(\dT))$ denotes the space of continuous $1$-cochains with values in $\Lie(\dT)$), which is, in fact, a $\bbc$-linear homomorphism since for any $\omega\in W_\ef$ we have
\[
    \zeta_{\nu+\nu'}(\omega) = (\log_{q} |\omega |_\ef)(\nu+\nu')=\zeta_\nu(\omega)+\zeta_{\nu'}(\omega)
    \]
for all $\nu,\nu'\in \Lie(\dT)^\ga$, and for each $\lambda \in \bbc$, 
    \[
    \zeta_{(\lambda\nu)}(\omega) = (\log_{q} |\omega |_\ef)(\lambda\nu) =  \lambda (\zeta_{\nu}(\omega)) = (\lambda \zeta_{\nu})(\omega).
    \]

\begin{prop}\label{prop:ExplicitCocycle0} 
The image of  $\zeta$ is inside $Z^1_c(W_\ef,\Lie(\dT))$ and it induces a $\bbc$-linear isomorphism 
\(
\Lie(\dT)^{\Gamma_\ef}\to H_c^1(W_\ef,\Lie(\dT)). 
\) 
\end{prop}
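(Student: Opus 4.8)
The plan is to verify directly that each $\zeta_\nu$ is a continuous $1$-cocycle, and then to recognise the induced map $\nu\mapsto[\zeta_\nu]$ as the isomorphism already produced in the proof of Proposition \ref{prop:H1middle}. For the cocycle identity, fix $\omega_1,\omega_2\in W_\ef$: since $|\cdot|_\ef$ is a homomorphism we have $\log_q|\omega_1\omega_2|_\ef=\log_q|\omega_1|_\ef+\log_q|\omega_2|_\ef$, and since $\nu\in\Lie(\dT)^{\Gamma_\ef}$ and the $W_\ef$-action on $\Lie(\dT)$ factors through $\Gamma_\ef$ we have $\omega_1\cdot\nu=\nu$, whence
\[
\zeta_\nu(\omega_1\omega_2)=(\log_q|\omega_1|_\ef)\nu+(\log_q|\omega_2|_\ef)\nu=\zeta_\nu(\omega_1)+\omega_1\cdot\zeta_\nu(\omega_2).
\]
Continuity of $\zeta_\nu$ is immediate from that of $|\cdot|_\ef$, so $\zeta(\nu)\in Z^1_c(W_\ef,\Lie(\dT))$; together with the $\bbc$-linearity of $\zeta$ noted before the statement and of the quotient map $Z^1_c(W_\ef,\Lie(\dT))\to H_c^1(W_\ef,\Lie(\dT))$, the composite $\nu\mapsto[\zeta_\nu]$ is a $\bbc$-linear homomorphism $\Lie(\dT)^{\Gamma_\ef}\to H_c^1(W_\ef,\Lie(\dT))$.

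For bijectivity I would exhibit $\nu\mapsto[\zeta_\nu]$ as the composite of the two isomorphisms used in the proof of Proposition \ref{prop:H1middle}. Put $M=\Lie(\dT)^{W^1_\ef}$. Since $W^1_\ef$ is compact, Proposition \ref{prop:MooreSummary}(g) gives $\uH^1(W^1_\ef,\Lie(\dT))=0$, so inflation–restriction yields an isomorphism $\Inf\colon\uH^1(V_F,M)\xrightarrow{\sim}\uH^1(W_\ef,\Lie(\dT))$. Next, the residual $V_F$-action on $M$ is semisimple (it factors through a finite quotient of $\Gamma_\ef$) with $M^{V_F}=\Lie(\dT)^{W_\ef}=\Lie(\dT)^{\Gamma_\ef}$: in the $\bbr$-cases the action on $M=\Lie(\dT)^{\Gamma_\ef}$ is trivial and $\uH^1(V_F,M)=\Hom_c(V_F,\Lie(\dT)^{\Gamma_\ef})\cong\Lie(\dT)^{\Gamma_\ef}$, while in the $\bbz$-cases $\uH^1(V_F,M)=M/(\sigma-1)M\cong M^{\sigma}=\Lie(\dT)^{\Gamma_\ef}$ for $\sigma$ a generator of $V_F$. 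In both cases this isomorphism $\Lie(\dT)^{\Gamma_\ef}\xrightarrow{\sim}\uH^1(V_F,M)$ sends $\nu$ to the class of the homomorphism $\varphi_\nu\colon v\mapsto(\log_q v)\nu$. Since $\zeta_\nu=\varphi_\nu\circ|\cdot|_\ef$ is literally the inflation of $\varphi_\nu$ along $W_\ef\to W_\ef/W^1_\ef=V_F$, the map $\nu\mapsto[\zeta_\nu]$ equals $\Inf$ composed with the above, hence is the $\bbc$-linear (and, everything being finite-dimensional, topological) isomorphism of Proposition \ref{prop:H1middle}.

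The one point that genuinely requires care is this last identification — that $\zeta_\nu$, and not merely a cohomologous cocycle, realises the isomorphism of Proposition \ref{prop:H1middle}. In the $\bbr$-cases this is transparent, since $\zeta_\nu$ is exactly the cocycle written there; in the $\bbz$-cases it reduces to the observation that a cocycle on $V_F\cong\bbz$ valued in the trivial module $\Lie(\dT)^{\Gamma_\ef}$ and taking value $\nu$ on a chosen generator is the linear function $v\mapsto(\log_q v)\nu$ (the generator of $V_F$ must be fixed consistently; a different choice only flips an overall sign, which does not affect the isomorphism property). As an independent sanity check of bijectivity one can also argue directly: injectivity by restricting any trivialising coboundary to the compact subgroup $W^1_\ef$, on which $\zeta_\nu$ vanishes, forcing the coboundary datum into $\Lie(\dT)^{\Gamma_\ef}$ (using semisimplicity of the Frobenius action in the $\bbz$-case) and hence $\nu=0$; surjectivity by the inflation–restriction argument above.
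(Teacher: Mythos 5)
Your proof is correct, but it takes a genuinely different route to bijectivity than the paper does. After the (identical) cocycle verification, the paper proves injectivity directly: assuming $\zeta_\nu={}^{\bar\omega}\mu-\mu$, it applies the Galois norm $N_\Gamma=\sum_{\gamma}\gamma$ to both sides, killing the coboundary side while producing $|\ga|\,(\log_q|\omega|)\,\nu$ on the other, whence $\nu=0$; surjectivity is then deduced abstractly from Proposition~\ref{prop:H1middle} (an injective $\bbc$-linear map between finite-dimensional vector spaces of equal dimension is an isomorphism). You instead exhibit $\nu\mapsto[\zeta_\nu]$ as \emph{equal} to the composite of the two isomorphisms $\Lie(\dT)^{\Gamma_\ef}\xrightarrow{\sim}\uH^1(V_F,M)\xrightarrow{\Inf}\uH^1(W_\ef,\Lie(\dT))$ constructed in Proposition~\ref{prop:H1middle}, by observing that $\zeta_\nu$ is literally the inflation of $\varphi_\nu:v\mapsto(\log_q v)\nu$ along $W_\ef\to V_F$. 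Your approach is longer but more informative --- it pins down which isomorphism $[\zeta]$ is, not merely that it is one --- whereas the paper's norm argument for injectivity is slicker and avoids the case split between $\bbr$- and $\bbz$-types that you need (trivial action vs.\ semisimplicity for the identification $M_{V_F}\cong M^{V_F}$). Your extra ``sanity check'' injectivity argument (restrict to $W^1_\ef$, then use semisimplicity of the Frobenius action) is also sound, though the paper's one-line norm computation accomplishes the same thing uniformly. Everything is consistent with the paper; no gap.
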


\begin{proof}
    If $\nu\in \Lie(\dT)^{\ga}$, then 
    \(\zeta_\nu(\omega_1\omega_2) = \zeta_\nu(\omega_1) + \zeta_\nu(\omega_2) = \zeta_\nu(\omega_1)+{}^{\overline{\omega_1}}\zeta_\nu(\omega_2),\)
    so that $\zeta_\nu\in Z^1_c(\we,\Lie(\dT))$. 
    Hence, it defines a homomorphism \[[\zeta]:\Lie(\dT)^\ga\to H^1_c(\we,\Lie(\dT))\] which is injective  since if $\zeta_\nu$ is a coboundary, that is, there exists $\mu\in\Lie(\dT)$ such that
    \[
    \zeta_\nu(\omega) = \log_q(|\omega|)\nu = {}^{\bar\omega}\mu - \mu
    \]
    for all $\omega\in W_\ef$, then by applying the norm $N_\Gamma = \sum_{\gamma}\gamma$ of the finite Galois group, we obtain
    \(
    N_\Gamma(\zeta_\nu(\omega)) = |\ga|\log_q(|\omega|)\nu = 0
    \)
    implying $\nu = 0$. By Proposition \ref{prop:H1middle} it follows that
    \([\zeta]:\Lie(\dT)^{\Gamma_\ef}\to H^1_c(W_\ef,\Lie(\dT))\)
    is a $\bbc$-linear isomorphism.
\end{proof}

In the $\bbz$-cases, we have an anologue to  the explicit linear cocycles \eqref{eq:ExplicitCocycle} but now with values in $\dT$. Let $z:\dT^\ga\to C_c^1(\we,\dT)$ be the homomorphism defined by 
\begin{equation}
    s\mapsto z_s(\omega) = s^{\log_q(|\omega|_\ef)}
\end{equation}
for all $s\in \dT^\ga$ and $\omega\in \we$. As in (the proof of) Proposition \ref{prop:ExplicitCocycle0}, one checks that $z_s\in Z_c^1(\we,\dT)$ for all $s\in T^\ga$ and hence $z$ induces a continuous homomorphism of Lie groups $[z]:\dT^\ga\to H^1(\we,\dT)$.

\begin{prop}\label{prop:ExplicitCocycle} 
If $V_F\cong \bbz$, the homomorphism $[z]:\dT^\ga\to H^1(\we,\dT)$ is a homomorphism of complex Lie groups and we have continuous surjections 
\[(\dT^{\Gamma_\ef})^\circ\stackrel{[z]}{\longrightarrow} H^1_c(W_\ef,\dT)^\circ\longrightarrow X_T\longrightarrow \dT_{\Gamma_\ef}.
\]
\end{prop}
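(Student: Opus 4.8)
The plan is to establish the three assertions in turn, deducing each from the differential of $[z]$ together with the structural results already proved.

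\emph{That $[z]$ is a morphism of complex Lie groups.} We already know $z_s\in Z^1_c(\we,\dT)$ for $s\in\dT^\ga$, so $[z]\colon\dT^\ga\to H^1(\we,\dT)$ is a continuous homomorphism of (real) Lie groups between abelian complex Lie groups: $\dT^\ga$ is a closed algebraic subgroup of the torus $\dT$, and $H^1(\we,\dT)$ carries the complex structure of Definition \ref{def:CanComplexStruc}. Since a continuous homomorphism of Lie groups is real-analytic and, being a homomorphism, is holomorphic as soon as it is holomorphic on the identity component (the other cosets being left-translates), it suffices to check $\bbc$-linearity of the differential at the identity. Here the key identity is $z_{\exp\nu}=\exp\circ\,\zeta_\nu$, valid because $(\exp\nu)^{\log_q|\omega|_\ef}=\exp\big((\log_q|\omega|_\ef)\nu\big)$; it makes the square
\[
\begin{tikzcd}
\Lie(\dT)^\ga \arrow[r,"{[\zeta]}","\sim"'] \arrow[d,twoheadrightarrow,"\exp"'] & \uH^1(\we,\Lie(\dT)) \arrow[d,twoheadrightarrow] \\
(\dT^\ga)^\circ \arrow[r,"{[z]}"'] & H^1_c(\we,\dT)^\circ
\end{tikzcd}
\]
commute, the vertical maps being the exponential of $(\dT^\ga)^\circ$ and the covering that defines the complex structure (Definition \ref{def:CanComplexStruc}). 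Since $[\zeta]$ is a $\bbc$-linear isomorphism by Proposition \ref{prop:ExplicitCocycle0}, it follows that $\mathrm d[z]_e=[\zeta]$ is $\bbc$-linear, as required.

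\emph{The first two surjections.} From the same square, $\mathrm d[z]_e=[\zeta]$ is an isomorphism, so $[z]$ is a local diffeomorphism near $e$; hence its image is an open subgroup of the connected group $H^1_c(\we,\dT)^\circ$, therefore all of it, and the image lies in the identity component because $(\dT^\ga)^\circ$ is connected. The surjection $H^1_c(\we,\dT)^\circ\to X_T$ is then the continuous homomorphism with finite kernel supplied by Corollary \ref{cor:Amax}: it is Langlands's isomorphism when $F$ is local non-Archimedean, and Langlands's map modulo locally trivial classes when $F$ is a global function field (Theorem \ref{thm:Zcase}).

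\emph{The surjection $X_T\to\dT_{\Gamma_\ef}$, and where the work lies.} I would use that $X_T\approx\Hom_c(L_T,\bbc^*)$ with $L_T$ a finite-index sublattice of $\Hom(X^\ga,\bbz)$ (Proposition \ref{prop:tildeXT}, Corollary \ref{cor:GlobXT}), and that $\dT_{\Gamma_\ef}=\Hom(\dX^\ga,\bbc^*)$ — the coinvariant group $\dT_{\Gamma_\ef}$ is connected, being a quotient of $\dT$, hence a torus with character lattice $\dX^\ga$. Restricting the perfect pairing $X\times\dX\to\bbz$ to $X^\ga$ gives an injection $\dX^\ga\hookrightarrow\Hom(X^\ga,\bbz)$ of finite index (after $\otimes\,\bbq$ both sides are $\Hom(X^\ga,\bbq)$, as $X^\ga\subset X$ has finite index), whose image I claim lies in $L_T$: a $\ga$-fixed cocharacter $\lambda$ is defined over $F$, and $\logtc$ (resp. $\log_T$) of $\lambda(\varpi)$ for a uniformizer $\varpi$ (resp. of $\lambda(\alpha)$ for an id\`ele $\alpha$ with $|\alpha|_F=q_F$) equals $\langle\,\cdot\,,\lambda\rangle|_{X^\ga}$. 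Dualizing the resulting finite-index inclusion $\dX^\ga\hookrightarrow L_T$ yields the surjection $X_T\approx\Hom_c(L_T,\bbc^*)\twoheadrightarrow\Hom(\dX^\ga,\bbc^*)=\dT_{\Gamma_\ef}$, with finite kernel; in the local non-Archimedean case this is the restriction map $X_T\to X_S\cong\dT_{\Gamma_\ef}$ to the maximal split subtorus $S\subset T$, surjective by Proposition \ref{prop:locT1}. The hard part is exactly this last step: identifying $\dT_{\Gamma_\ef}$ with $\Hom(\dX^\ga,\bbc^*)$ and pinning down the chain of lattices $\dX^\ga\subset L_T\subset\Hom(X^\ga,\bbz)$ uniformly over the local non-Archimedean and global function field cases, since Proposition \ref{prop:locT1} is stated only in the local setting; everything above it is formal once Proposition \ref{prop:ExplicitCocycle0} and Definition \ref{def:CanComplexStruc} are in hand.
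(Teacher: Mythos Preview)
Your approach is essentially the same as the paper's: the holomorphy of $[z]$ via the commuting square with $[\zeta]$ and the exponential maps, the appeal to Corollary~\ref{cor:Amax} for $H^1_c(\we,\dT)^\circ\to X_T$, and the identification of the last surjection with restriction to the maximal split subtorus are exactly what the paper does. The paper packages the final step by writing $\dX^\ga=\dX(S)\cong L_S\hookrightarrow L_T$ (using that $S$ is split), which is precisely your ``evaluate $\lambda\in\dX^\ga$ at a uniformizer/id\`ele'' argument in disguise, and it works uniformly in the local and global $\bbz$-cases, so your worry about Proposition~\ref{prop:locT1} being local-only is unfounded.

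There is one genuine slip. You justify that $\dX^\ga\hookrightarrow\Hom(X^\ga,\bbz)$ has finite index by asserting ``$X^\ga\subset X$ has finite index''. This is false in general: the rank of $X^\ga$ equals the $F$-split rank of $T$, which is typically smaller than $\dim T$ (for an anisotropic torus $X^\ga=0$). The correct argument is that over $\bbq$ the $\ga$-representation $X_\bbq$ splits as $X_\bbq^\ga\oplus N$ with $N$ the sum of nontrivial isotypics, and the perfect pairing $X_\bbq\times\dX_\bbq\to\bbq$ respects this decomposition, so it restricts to a perfect pairing $X_\bbq^\ga\times\dX_\bbq^\ga\to\bbq$; equivalently, the norm map gives $X_\ga\otimes\bbq\cong X^\ga\otimes\bbq$, whence $\dX^\ga\otimes\bbq=\Hom(X_\ga,\bbq)\cong\Hom(X^\ga,\bbq)$. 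Your conclusion survives, but the stated reason does not.
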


\begin{proof}
In the $\bbz$-cases, analogously to \eqref{eq:RcaseDiagram}, we have a commutative digram
\begin{equation}\label{eq:ZcaseDiagram}
    \begin{tikzcd}
    \Lie(\dT)^\ga\arrow[r,"\approx"]\arrow[d,"\exp"]&H^1_c(V_F, \Lie(\dT)^\ga) \arrow[r,"{[\zeta]}"]\arrow[d,"\exp_*"]& H^1_c(\we,\Lie(\dT))\arrow[d,"\exp_*"]\\
        \dT^\ga\arrow[r,"\approx"]&H^1_c(V_F, \dT^\ga) \arrow[r,"{[z]}"]& H^1_c(W_\ef,\dT)
    \end{tikzcd}
\end{equation}    
which implies that $[z]$ preserves the canonical complex structures. By restricing to connected components in the bottom row, this justifies the first arrow. The surjectivity of the arrow $H^1_c(W_\ef,\dT)^\circ\to X_T$ was discussed in (\ref{eq:XTdiagrams}) and Corollary \ref{cor:Amax}. On the other hand, let $S\subset T$ be the largest split subtorus of $T$. Write $X(S) = \Hom(S,\bbg_m)$ for the character lattice of $S$ and $\dX(S)=\Hom(\bbg_m,S)$ for its cocharacter lattice. As $X^\ga\subseteq X(S)$ is an inclusion of finite index, it induces an inclusion of the respective images of the $\log$-maps (see text above Corollary \ref{cor:GlobXT})
    \(
    L_S \hookrightarrow L_T,
    \)
    which is also of finite index. Furthermore, as $S$ is split we have a canonical isomorphism $\dX(S)=\dX^\ga\cong L_S$, from which we obtain an epimorphism
    \[
    \Hom(L_T,\bbc^*)=X_T\to X_S = \Hom(L_S,\bbc^*) \cong \dT_\ga,
    \]
    finishing the proof.
\end{proof}
Together with Corollary \ref{cor:GlobXT} and Proposition \ref{prop:locT1}, this concludes the proof of Theorem \ref{thm:mainThm2}.


\begin{thebibliography}{9999}

\bibitem{AM} Austin, T., Moore, C.C.,
{\it Continuity properties of measurable group cohomology,} Mathematische Annalen, {\bf 356}(3) (2013), pp. 885--937.

\bibitem{Ar} Arens, R.F., 
{\it A topology for spaces of transformations,} Annals of Mathematics, {\bf 47}(3) (1946), pp. 480--495.

\bibitem{Bi} Birkbeck, C. D., 
{\it On the $ p $-adic Langlands correspondence for algebraic tori,} Journal de Th\'eorie des Nombres de Bordeaux {\bf 32}(1) (2020), pp. 133--158.

\bibitem{Bo} Borel, A.,
{\it Linear algebraic groups,}
Vol. 126. Springer Science \& Business Media, 2012.

\bibitem{Bo2} Borel, A., 
{\it Automorphic $L$-functions,} 
In: Automorphic forms, representations and $L$-functions (Proc. Sympos. Pure Math., Oregon State Univ., Corvallis, Ore., 1977), Part 2, AMS (1979), pp. 27--61.

\bibitem{Bou} Bourbaki, N.,
{\it Topologie G\'en\'erale, Chapitres 5 \`a 10,}
Hermann, Paris, 1974. 

\bibitem{Con} Conrad, B.,
{\it Finiteness of class numbers for algebraic groups,}\\
\url{https://virtualmath1.stanford.edu/~conrad/vigregroup/vigre02/cosetfinite.pdf}

\bibitem{Ha} Haines, T. J.,
{\it The stable Bernstein center and test functions for Shimura varieties},
Automorphic forms and Galois representations {\bf 2} (2014), pp. 118--186.

\bibitem{HR} Hewitt, E., K. A. Ross,
{\it Abstract Harmonic Analysis: Volume I Structure of Topological Groups Integration Theory Group Representations,}
A Series of Comprehensive Studies in Mathematics \textbf{115}. Springer Science \& Business Media, 2012.

\bibitem{Hu} Humphreys, J. E.,
{\it Linear algebraic groups,}
GTM \textbf{21}. Springer Science \& Business Media, 2012.

\bibitem{Lab} Labesse, J.-P.,
{\it Cohomologie, L-groupes et fonctorialit\'e,}
Compositio Math. {\bf 55}(2) (1985), pp. 163--184.

\bibitem{La} Langlands, R.P.,
{\it Representations of abelian algebraic groups,}
Pac J Math, \textbf{181}(3) (1997), pp. 231--250.

\bibitem{Ma} Mackey, G.,
{\it The Laplace Transform For Locally Compact Abelian Groups,}
Proceedings of the National Academy of Sciences \textbf{34}(4) (1948), pp. 156--162.

\bibitem{MW} Moeglin, C., Waldspurger, J.-L.,
{\it Spectral decomposition and Eisenstein series,}
Cambridge tracts in Mathematics \textbf{113}, Cambridge University Press, 1995.

\bibitem{Mo1} Moore, C.,
{\it Extensions and low dimensional cohomology theory of locally compact groups. I.}
Trans Amer Math Soc, \textbf{113}(1) (1964), pp. 40--63.

\bibitem{Mo2} Moore, C.,
{\it Extensions and low dimensional cohomology theory of locally compact groups. II.}
Trans Amer Math Soc, \textbf{113}(1) (1964), pp. 64--86.

\bibitem{Mo3} Moore, C.,
{\it Extensions and low dimensional cohomology theory of locally compact groups. III.}
Trans Amer Math Soc, \textbf{221}(1) (1976), pp. 1--33.

\bibitem{Mo4} Moore, C.,
{\it Extensions and low dimensional cohomology theory of locally compact groups. IV.}
Trans Amer Math Soc, \textbf{221}(1) (1976), pp. 35--58.

\bibitem{NSW} Neukirch, J., Schmidt, A. and Wingberg, K.,
{\it Cohomology of number fields,}
A Series of Comprehensive Studies in Mathematics \textbf{323}. Springer Science \& Business Media, 2013.

\bibitem{Pr} Prasad, G.,
{\it Elementary proof of a theorem of Bruhat-Tits-Rousseau and of a theorem of Tits.}
Bulletin de la Soci\'et\'e math\'ematique de France {\bf 110} (1982), pp. 197--202.

\bibitem{Ra} Rajan, C. S.,
{\it On the vanishing of the measurable Schur cohomology groups of Weil groups.}
Compositio Mathematica {\bf 140}(1) (2004), pp. 84--98.

\bibitem{RV} Ramakrishnan, D., Valenza, R.J.,
{\it Fourier analysis on number fields,}
Graduate Texts in Mathematics \textbf{186}. Springer Science \& Business Media, 1998.

\bibitem{Sc} Schwein, D.,
{\it Orthogonal root numbers of tempered parameters.}
Mathematische Annalen, {\bf 386}(3) (2023), pp. 2283--2319.

\bibitem{Ta1} Tate, J., 
{\it Fourier analysis in number fields and Hecke's zeta-functions.} 
In: Algebraic number theory, eds.: Cassels, J.W.S., Fr\"ohlich, A., Academic Press (1967), pp. 305--347.

\bibitem{Ta2} Tate, J., 
{\it Number theoretic background.} 
In: Automorphic forms, representations and $L$-functions (Proc. Sympos. Pure Math., Oregon State Univ., Corvallis, Ore., 1977), Part 2, AMS (1979), pp. 3--26.

\bibitem{Wa} Warner, F.W.,
{\it Foundations of differentiable manifolds and Lie groups,}
Vol. 94. Springer Science \& Business Media, 1983.

\bibitem{Wb} Weibel, C.A.,
{\it An introduction to homological algebra,}
No. 38. Cambridge university press, 1994.

\bibitem{We} Weil, A.,
{\it Basic number theory,}
Springer Science \& Business Media, 1974.

\bibitem{Yu} Yu, J.-K.,
{\it On the local Langlands correspondence for tori.}
In: Ottawa Lectures on Admissible Representations of Reductive \( p \)-adic Groups, eds.: Cunningham, C., Nevins, M., Fields Institute Monographs, AMS (2009), pp. 177--183.

\end{thebibliography}
\end{document}